\documentclass[12pt]{article}
\usepackage{amsmath,amsthm,amssymb,latexsym,color}
\usepackage{hyperref}

\voffset-2.5 cm
\hoffset -1.5 cm
\textwidth 16 cm
\textheight 23 cm
\thispagestyle{empty}

\theoremstyle{plain}
\newtheorem{theor}{Theorem}[section]
\newtheorem*{theorA}{Theorem~A}
\newtheorem*{corB}{Corollary~B}

\newtheorem{prop}[theor]{Proposition}

\newtheorem{cor}[theor]{Corollary}
\newtheorem{lemma}[theor]{Lemma}
\newtheorem{rem}[theor]{Remark}
\theoremstyle{remark}

\def\R{{\mathbb R}}

\def\Event{{\mathcal E}}

\def\Prob{{\mathbf{P}}}
\def\Var{{\mathbf{Var}}}
\def\E{{\mathbf E}}
\def\M{{\mathbf{Med}}}
\def\G{\mathcal{G}}

\def\gsm{\gtrsim}
\def\x1{\xi}
\def\p0{\frac{2\log n}{\log(2e)}}

\title{The variance of the $\ell_p^n$--norm of the Gaussian vector, and Dvoretzky's theorem}
\author{Anna Lytova\footnote{University of Opole, Poland; email: alytova@math.uni.opole.pl. A significant part
of this work was done when A.L.\ was visiting Princeton University in January--February, 2017}\quad and\quad
Konstantin Tikhomirov\footnote{Princeton University, NJ; email: kt12@math.princeton.edu.
The research is partially supported by the Simons Foundation.}}

\begin{document}

\maketitle

\begin{abstract}
Let $n$ be a large integer, and let $G$ be the standard
Gaussian vector in $\R^n$.
Paouris, Valettas and Zinn (2015) showed that for all $p\in[1,c\log n]$,
the variance of the $\ell_p^n$--norm of $G$ is equivalent,
up to a constant multiple, to $\frac{2^p}{p}n^{2/p-1}$, and for
$p\in[C\log n,\infty]$, $\Var\|G\|_p\simeq (\log n)^{-1}$.
Here, $C,c>0$ are universal constants. That result left open the question of
estimating the variance for $p$ logarithmic in $n$.
In this note, we resolve the question by providing a complete characterization of $\Var\|G\|_p$
for all $p$. We show that there exist two transition points (windows) in which behavior of $\Var\|G\|_p$ significantly changes.
We also discuss some implications of our result in context of random Dvoretzky's theorem for $\ell_p^n$.
\end{abstract}

{\small {\bf MSC 2010:} 46B06, 46B09, 52A21, 60E15, 60G15}

{\small {\bf Keywords and phrases:} $\ell_p^n$ spaces, variance of $\ell_p$ norm, Dvoretzky's theorem, order statistics}

\section{Introduction}
\label{s:intro}

Let $n$ be a large integer, $p$ be a number in $[1,\infty]$, and denote by $\|\cdot\|_p$
the standard $\ell_p^n$--norm in $\R^n$. Let $G$ be the standard $n$-dimensional Gaussian vector.
Variance of the $\|\cdot\|_p$--norm of $G$ may serve as a basic example of the concentration of measure phenomenon
(most of the Gaussian mass is located in a thin shell of an appropriately rescaled $\ell_p^n$--ball).
It is well known that for a fixed $p<\infty$, $\Var\|G\|_p\simeq v_p n^{2/p-1}$, where the quantity $v_p$ depends only on $p$
and not on $n$
(see, in particular, \cite{Naor} and \cite{PVZ}), whereas
the variance of the $\|\cdot\|_\infty$--norm of $G$ is of order $(\log n)^{-1}$ (see, for example, \cite[p.~47--48]{Ch:13}
and \cite{PVZ}). At the same time, for $p$ growing to infinity with $n$, no sharp results were available
until quite recently. In \cite{PVZ}, Paouris, Valettas and Zinn showed that $\Var\|G\|_p\simeq \frac{2^p}{p}n^{2/p-1}$
for $p\leq c\log n$ and $\Var\|G\|_p\simeq (\log n)^{-1}$ for $p\geq C\log n$
($C,c>0$ being universal constants). This result of \cite{PVZ} leaves the gap $c\log n\leq p\leq C\log n$
in which the behavior of the variance was not clarified. The authors of \cite{PVZ} conjectured
that the variance changes from polynomially small in $n$ to logarithmic around $\widetilde p=\log_2 (n)$.
This conjecture was the starting point of our work.

The question of computing the Gaussian variance of the $\ell_p^n$--norm seems natural on its own right;
nevertheless, it gains more sense in the context of asymptotic geometric analysis.
Since the fundamental discovery of Milman \cite{Milman},
it is known that Gaussian concentration properties of a norm $\|\cdot\|$ in $\R^n$
are strongly connected
with geometry of random subspaces of $(\R^n,\|\cdot\|)$. The classical theorem of Dvoretzky \cite{Dvoretzky}
asserts that every infinite-dimensional Banach space contains finite subspaces of arbitrarily
large dimension which are arbitrarily close to Euclidean (in the Banach--Mazur metric).
Milman showed in \cite{Milman} that a stronger result takes place.
Given a norm $\|\cdot\|$ in $\R^n$, a subspace
$E\subset\R^n$ and a real number $K\geq 1$,  we will (rather, unconventionally) call the subspace
{\it $K$-spherical} if
$\sup_{x\in E,\|x\|_2=1}\|x\|/\inf_{x\in E,\|x\|_2=1}\|x\|\leq K$.
The theorem of Milman states that
for any norm $\|\cdot\|$ in $\R^n$
with the Lipschitz constant $L$ and any $\varepsilon\in(0,1/2)$, the random $\frac{c\varepsilon^2}{\log(1/\varepsilon)}
\big(\frac{\E \|G\|}{L}\big)^2$--dimensional subspace of $(\R^n,\|\cdot\|)$ with uniform (rotation-invariant) distribution
is $(1+\varepsilon)$--spherical with probability close to $1$.
In particular, the Dvoretzky--Rogers lemma implies that for any norm $\|\cdot\|$ with the unit ball
in {\it John's position}, the random $\frac{c\varepsilon^2\log n}{\log(1/\varepsilon)}$--dimensional
subspace is $(1+\varepsilon)$--spherical with large probability.
We refer to monographs and surveys \cite{MS 1986,Pisier,Schechtman 2013,AGM 2015}
for more information as well as to papers \cite{PVZ, PV 16a, PV 16b, T 2017, PV 17}
for some recent developments of the subject.
In this text, we leave out any discussion of the {\it existential} Dvoretzky theorem
which is concerned with finding at least one large almost Euclidean subspace (the best known general
result in this direction is due to Schechtman \cite{S 2006}) as well as the {\it isomorphic} Dvoretzky theorem
which deals with the regime when distortion $\varepsilon$ grows to infinity with $n$ (see, in particular, \cite{MS iso}).

In the regime of ``constant distortion'' (say, when $1+\varepsilon=2$) the result of Milman
is sharp, that is, if a random $k$-dimensional subspace is $2$--spherical with high probability
then necessarily $k\leq C\big(\frac{\E\|G\|}{L}\big)^2$
(see Milman--Schechtman \cite{MS 1997} and Huang--Wei \cite{HW} for reverse estimates matching Milman's bound).
However, when $\varepsilon$ tends to zero with $n\to\infty$, the original estimate is suboptimal.
Gordon \cite{Gordon 1985} and later Schechtman \cite{Schechtman 1989}
improved the dependence on $\varepsilon$ from $\frac{\varepsilon^2}{\log(1/\varepsilon)}$
to $\varepsilon^2$, which is sharp for {\it some} norms but not in general.
For example, it was shown in \cite{Schechtman 2007} and \cite{KT:13}
that a random $k$--dimensional subspace of $\ell_\infty^n$ is
$(1+\varepsilon)$--spherical with probability close to one if and only if
$k\lesssim \frac{\varepsilon\log n}{\log(1/\varepsilon)}$. Moreover, for $1$-unconditional norms
in {\it the $\ell$-position}, it was proved in \cite{T 2017} that random $\frac{c\varepsilon\log n}{\log(1/\varepsilon)}$--dimensional
subspaces are $(1+\varepsilon)$--spherical with high probability.
For arbitrary norms, the problem of interdependence between $\varepsilon$ and the dimension in the random Dvoretzky theorem
is wide open, and even in the class of $\ell_p^n$--spaces there is no complete solution as of this writing.

A considerable progress in estimating the distortion (in the ``almost isometric'' regime)
of uniform random subspaces of $\ell_p^n$
for all $p$ was due to Naor \cite{Naor}
and Paouris, Valettas and Zinn \cite{PVZ}.
For a fixed $2<p<\infty$, Naor \cite{Naor} obtained concentration inequalities which,
in particular, can be employed to show that random $u_{n,p}(\varepsilon n)^{2/p}$--dimensional sections of the
$\ell_p^n$--ball are $(1+\varepsilon)$--spherical with probability close to one whenever $\varepsilon\geq n^{-v_p}$
(where $v_p>0$ depends only on $p$ and $u_{n,p}$ is a quantity of order polylogarithmic in $n$
arising from the application of the covering argument).
The bound $w_p(\varepsilon n)^{2/p}$ on the dimension of typical
$(1+\varepsilon)$-Euclidean subspaces of $\ell_p^n$ (for $w_p>0$ depending only on $p>2$) was
confirmed by Paouris, Valettas and Zinn \cite{PVZ} in the range $\varepsilon\geq n^{-v_p}$, and
it was shown that for a fixed $p$ the estimate is close to optimal.
The paper \cite{PVZ} provides bounds (upper and lower) for the Dvoretzky dimension,
as well as concentration inequalities for the standard Gaussian vector and the Gaussian variance in different regimes
giving an emphasis to the case when $p$ grows with $n$.
However, for $p$ logarithmic in $n$, the results are not sharp.

In the context of Dvoretzky's theorem, the $\ell_p^n$--spaces for logarithmic $p$
supply rather interesting geometric examples. As was observed in \cite{PVZ}, there
are universal constants $c,C>0$ such that, say, $\Var\|G\|_{c\log n}\leq n^{-1/2}$,
whereas $\Var\|G\|_{C\log n}\gtrsim \frac{1}{\log n}$; thus, the variance can be quite sensitive
to replacing a norm with an equivalent norm.
Note that the bounds for the variance immediately imply that, for example,
the random $3$-dimensional subspace of $\ell_{c\log n}^n$ is $(1+n^{-c})$--spherical
with probability at least $1-n^{-c}$ for a universal constant $c>0$ (and instead of $3$ we can
take any constant dimension). At the same time, most of $3$--dimensional subspaces of
$\ell_\infty^n$ (which is a constant Banach--Mazur distance away from $\ell_{c\log n}^n$)
are not even $(1+\frac{1}{\log n})$--spherical \cite{KT:13}.
The result of \cite{PVZ} leaves open the question whether there is a ``phase transition'' point $\widetilde p=\widetilde p(n)$
such that for any $\delta>0$ and all sufficiently large $n$ we have $\Var\|G\|_{(1-\delta)\widetilde p}\leq n^{-v_\delta}$
and $\Var\|G\|_{(1+\delta)\widetilde p}\geq \frac{v_\delta}{\log n}$, where $v_\delta>0$ depends only on $\delta$.
Our result answers this question and completely settles the problem of computing the Gaussian variance
of $\|\cdot\|_p$--norms. Below, for any two quantities $a,b$ we write ``$a\simeq b$''
if $C^{-1}a\leq b\leq Ca$ for a universal constant $C>0$.

\begin{theorA}
There is a universal constant $n_0>0$ with the following property. Let $n\geq n_0$ and let $G$ be the standard Gaussian
vector in $\R^n$.
Further, denote by $\xi$ the quantile of order $1-\frac{1}{n}$ with respect to the distribution of the absolute value of
a standard Gaussian variable $|g|$, i.e.\ such that $\Prob\{|g|\leq \xi\}=1-\frac{1}{n}$. Then
\begin{itemize}
\item For all $p$ in the range $1\leq p\leq \frac{2\log n}{\log(2e)}$ we have
$$
\Var(\|G\|_p)\simeq \frac{2^p}{p}n^{2/p-1};
$$
\item For $\frac{2\log n}{\log(2e)}\leq p\leq \xi^2$ we have
$$
\Var(\|G\|_p)\simeq \frac{\exp\big(-\frac{p}{2e}n^{2/p} +\log n\big)}
{\sqrt{\log n}\,\,\big(\sqrt{\log n}+p-\frac{2\log n}{\log (2e)}\big)};
$$
\item For $p\geq \xi^2$ we have
$$\Var(\|G\|_p)\simeq \frac{1}{\log n}\Big(1-\frac{\xi^2-\xi}{p}\Big).$$
\end{itemize}
\end{theorA}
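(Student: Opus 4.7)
The strategy is to reduce the problem to analyzing $Y:=\|G\|_p^p=\sum_{i=1}^n|G_i|^p$, a sum of iid summands with mean $\mu:=n\,\E|g|^p$, and to transfer fluctuations of $Y$ to those of $\|G\|_p=Y^{1/p}$. The two transition points $p^*:=2\log n/\log(2e)$ and $\xi^2$ appear according to which range of order statistics of $|G_i|$ drives the fluctuation of $Y$: the bulk near $|g|\sim\sqrt p$, an intermediate scale at which a single term $|G_i|^p$ becomes comparable to $\mu$, or the extreme top order statistic near $\xi$.

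For $p\le p^*$ I would use the Taylor linearization $Y^{1/p}\approx\mu^{1/p}\bigl(1+(Y-\mu)/(p\mu)\bigr)$, giving $\Var\|G\|_p\approx\mu^{2/p-2}\Var(Y)/p^2$. With $\Var Y=n\,\Var|g|^p$ by independence, direct evaluation of the Gaussian moments $\E|g|^p$ and $\E|g|^{2p}$ via Stirling produces $\frac{2^p}{p}n^{2/p-1}$. The linearization is justified because the relative fluctuation $\sqrt{\Var Y}/\mu\simeq 2^{p/2}/\sqrt n$ stays small throughout this range, with Gaussian concentration controlling the Taylor remainder. For $p\ge\xi^2$ the norm is essentially $|G|^*_{(1)}$: writing
\[
\|G\|_p=|G|^*_{(1)}\bigl(1+R\bigr)^{1/p},\qquad R:=\sum_{i\ge 2}(|G|^*_{(i)}/|G|^*_{(1)})^p,
\]
the quantity $R$ is exponentially suppressed for $p\gtrsim\log n$ since typical ratios $|G|^*_{(i)}/|G|^*_{(1)}$ are bounded away from $1$, and a second-order expansion of $(1+R)^{1/p}$ together with the classical $\Var|G|^*_{(1)}\simeq 1/\log n$ produces the correction $1-(\xi^2-\xi)/p$, whose specific shift is dictated by the typical logarithmic spacing between consecutive top Gaussian order statistics.

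The main obstacle is the middle regime $p^*\le p\le\xi^2$. Here I fix a threshold $r_p$ determined by $r_p^2=pn^{2/p}/e$, the scale at which a single $|G_i|^p$ becomes of order $\mu$, and decompose
\[
Y=Y_{\le}+Y_{>},\qquad Y_{\le}:=\sum_i|G_i|^p\mathbf{1}_{\{|G_i|\le r_p\}},\qquad Y_{>}:=\sum_i|G_i|^p\mathbf{1}_{\{|G_i|>r_p\}}.
\]
The truncated piece $Y_{\le}$ has bounded summands and concentrates tightly via Bernstein-type inequalities, contributing only lower-order variance. The tail $Y_{>}$ is driven by the Poisson-like count $N_p:=|\{i:|G_i|>r_p\}|$, whose mean is $n\,\Prob\{|g|>r_p\}\simeq\exp(\log n-pn^{2/p}/(2e))/\sqrt{\log n}$ by the Gaussian Mills ratio; this produces the exponential factor in the numerator after transferring fluctuations of $Y$ through $y\mapsto y^{1/p}$ at $y\simeq\mu$. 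The polynomial denominator $\sqrt{\log n}(\sqrt{\log n}+p-p^*)$ emerges from a Laplace-type analysis of $\E[|g|^{2p}\mathbf{1}_{\{|g|>r_p\}}]$, whose Gaussian density is concentrated in a window of width $\sim 1/\sqrt{\log n}$ about $r_p$, together with an additional factor $(p-p^*)$ that measures the decay of the linearized contribution away from the threshold.

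The hardest step will be the matching lower bound in the middle regime. The upper bound is accessible via the Gaussian Poincar\'e (or log-Sobolev) inequality applied after the truncation described above, but the lower bound requires producing an explicit family of Gaussian configurations witnessing the claimed fluctuation. For this I would use the pairwise identity $2\Var X=\E(X(\omega)-X(\omega'))^2$ with $\omega'$ an independent copy, conditioned on the joint event that $|G|^*_{(1)}$ is in the relevant window around $r_p$ and $N_p\in\{0,1\}$, and analyze the variation of $\|G\|_p$ under a one-coordinate Gaussian perturbation. A secondary but nontrivial point is ensuring that the three formulas glue smoothly at $p=p^*$ and $p=\xi^2$; this requires carrying subleading terms in both the Stirling expansion of $\Gamma(p/2)$ and the Mills ratio asymptotics.
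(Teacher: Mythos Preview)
Your truncation strategy and the lower-bound mechanism (the pairwise variance identity with conditioning) are close in spirit to the paper, and your threshold $r_p^2=pn^{2/p}/e$ coincides with the paper's truncation level $M$. However, there are two genuine gaps.

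First, for the upper bound in the middle regime you invoke Gaussian Poincar\'e after truncation. Poincar\'e applied to $f_T(G)=\bigl(\sum_i\min(T,|g_i|)^p\bigr)^{1/p}$ yields $\Var f_T\lesssim n\,\E|\partial_1 f_T|^2$, which is off from the truth by a factor of $p\simeq\log n$. The paper's essential tool is Talagrand's $L_1$--$L_2$ bound, which supplies an extra factor $1/(1+\log A)$ with $A=\E|\partial_1 f_T|^2/(\E|\partial_1 f_T|)^2$; a separate computation then shows $\log A\gtrsim p$, recovering the missing $1/p$. Your alternative route through Taylor linearization of $y\mapsto y^{1/p}$ does carry a $1/p^2$ from the derivative and can in principle replace Talagrand, but the proposal conflates the two methods and does not make clear which one is actually doing the work, nor how the Taylor remainder is controlled when a single $|g_i|^p$ exceeds $\mu$.

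Second, and more seriously, your treatment of $p\ge\xi^2$ breaks down near $p=\xi^2$. Consecutive top Gaussian order statistics have ratio $1-O(1/\log n)$, so for $p\simeq\log n$ the quantity $R=\sum_{i\ge2}(g^*_i/g^*_1)^p$ is of order one, not exponentially small. More to the point, the theorem gives $\Var\|G\|_{\xi^2}\simeq(\log n)^{-3/2}$, which is \emph{much smaller} than $\Var g^*_1\simeq(\log n)^{-1}$: the norm is not ``essentially the maximum'' here. In your expansion $\|G\|_p\approx g^*_1+g^*_1 R/p$ the three variance/covariance terms are each of order $(\log n)^{-1}$ and must cancel to leading order; your heuristic about ``logarithmic spacing'' gives no mechanism for this cancellation. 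The paper sidesteps the issue by never isolating the maximum: it runs the same truncated-Talagrand scheme uniformly in $p$, and the factor $1-(\xi^2-\xi)/p$ emerges from a direct truncated-moment computation, namely $M^{-1}e^{-M^2/2}\simeq n^{-1}\bigl(1-(\xi^2-\xi)/p\bigr)$ for $p>\xi^2$.
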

Everywhere in this note, ``$\log$'' stands for the natural logarithm.
As we mentioned before, the above estimates in the regimes $p\leq c\log n$ and $p\geq C\log n$
were previously derived in \cite{PVZ}.
The variance of $\|G\|_p$, the way we represent it, is a piece-wise function,
with the pieces equivalent at respective boundary points.
The points $\frac{2\log n}{\log(2e)}$ and $\xi^2=2\log n-o(\sqrt{\log n})$ (see (\ref{eq:quantile}))
are chosen rather arbitrarily in a sense that each one can be shifted to the right or to the left by a small constant multiple
of $\sqrt{\log n}$, which would change the estimates only by a multiplicative constant.
In this connection, we prefer to speak about ``transition windows'' rather than ``transition points''.

To have a better picture of how the variance changes with $p$, it may be useful to consider its logarithm
$\log\Var\|G\|_p$ in the range $c\log n\leq p<\infty$ for some fixed small constant $c$, so that the term $n^{2/p}$
is bounded.
For $c\log n\leq p\leq\frac{2\log n}{\log(2e)}$,  $\log\Var\|G\|_p$ is an almost linear function of $p$.
In the range $p\geq \xi^2$, $\log\Var\|G\|_p$
is essentially of order $-\log\log n$ (up to a bounded multiple).
In the intermediate regime $\frac{2\log n}{\log(2e)}\leq p\leq \xi^2$, disregarding additive terms double logarithmic in $n$,
$\log\Var\|G\|_p$ behaves as $-\frac{p}{2e}n^{2/p}+\log n$, 
which is a convex function close to parabola $-\frac{(2\log n-p)^2}{4\log n}$ near the point $2\log n$.

Our result implies, in particular, that for any fixed $\delta\in(0,1)$ and all sufficiently large $n$, we have
$\Var\|G\|_{(2-\delta)\log n}\leq n^{-v_\delta}$ whereas $\Var\|G\|_{(2+\delta)\log n}\geq \frac{v_\delta}{\log n}$
for some $v_\delta>0$ depending only on $\delta$.
Observe that the Banach--Mazur distance between $\ell_{(2-\delta)\log n}^n$
and $\ell_{(2+\delta)\log n}^n$ is of order $1+O(\delta)$, so the
``power of $n$ to logarithmic'' transition happens at an almost isometric scale.
In the context of the random Dvoretzky theorem, this implies
\begin{corB}
For any $\delta\in (0,1)$,
there are $w_{\delta},n_{\delta}>0$ depending on $\delta$ with the following property.
For any $\varepsilon\in(0,1)$ and $n\geq n_{\delta}$, let
$2\le k\le \lceil w_\delta \log n/\log(2/\varepsilon)\rceil$,
and let $E$ be a uniformly distributed random $k$-dimensional subspace of $\R^n$.
Then
\begin{equation}
 \Prob\big\{E\mbox{ is $(1+\varepsilon)$--spherical subspace of }\ell_{(2-\delta)\log n}^n\big\}\geq 1-n^{-w_{\delta}}.
 \label{eq:spherical}
\end{equation}
At the same time,
\begin{equation}
 \Prob\big\{E\mbox{ is not $(1+\frac{w_{\delta}}{\log n})$--spherical subspace of }\ell_{(2+\delta)\log n}^n\big\}\geq w_\delta.
 \label{eq:not-spherical}
\end{equation}
\end{corB}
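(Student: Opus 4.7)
The plan is to establish the two inequalities in Corollary~B by feeding Theorem~A into the standard random Dvoretzky machinery. The input at $p_-=(2-\delta)\log n$ is a polynomial-in-$n$ variance bound: using $n^{2/p_-}=e^{2/(2-\delta)}$ together with the fact that $x\mapsto xe^{2/x}$ strictly exceeds its minimum $2e$ for any $x\ne 2$, the second regime of Theorem~A gives $\Var\|G\|_{p_-}\le n^{-c_\delta}$ with $c_\delta:=\frac{(2-\delta)e^{2/(2-\delta)}}{2e}-1>0$. The input at $p_+=(2+\delta)\log n$ is a variance lower bound: the third regime of Theorem~A yields $\Var\|G\|_{p_+}\ge v_\delta/\log n$ because $1-(\xi^2-\xi)/p_+$ stays above $\delta/(2(2+\delta))$ for $n$ large.

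To prove (\ref{eq:spherical}), I parametrize the random subspace as $E=UF$ with $U$ Haar on $O(n)$ and $F$ a fixed $k$-dimensional subspace, so that $\|Uy\|_p$ is distributed as $\|G\|_p/\|G\|_2$ for each unit $y\in F$. Combining $\Var\|G\|_{p_-}\le n^{-c_\delta}$ with Gaussian Lipschitz concentration enhanced by Paouris--Valettas super-concentration (which strengthens the Lipschitz tail to $\Prob\{|\|G\|_p-\M\|G\|_p|>t\}\le 2\exp(-ct^2/\Var\|G\|_p)$ in the relevant window) yields the super-polynomial tail $\Prob\{|\|Uy\|_p-\mu|>\varepsilon\mu\}\le\exp(-c\varepsilon^2 n^{c_\delta}\log n)$, where $\mu\asymp\sqrt{(\log n)/n}$ is the median. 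A Euclidean $(\varepsilon\mu/4)$-net of the unit sphere of $F$, of cardinality at most $\bigl(C\sqrt{n/\log n}/\varepsilon\bigr)^k$, together with the union bound and the $1$-Lipschitz extension from the net to the full sphere, certifies that $E$ is $(1+\varepsilon)$-spherical with probability at least $1-n^{-w_\delta}$ whenever $k\le w_\delta\log n/\log(2/\varepsilon)$.

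To prove (\ref{eq:not-spherical}), I pick two orthogonal unit vectors $u,v\in E$ through Gram--Schmidt on two independent standard Gaussians whose span is a uniformly random $2$-dimensional subspace of $E$. Each of $\|u\|_p$, $\|v\|_p$ is distributed as $\|G\|_p/\|G\|_2$, and for large $n$ the two variables are close to independent. The variance lower bound, combined with the Gumbel-type anti-concentration inherited by $\|G\|_p$ from $\|G\|_\infty$ in the regime $p\ge 2\log n$ (via the approximation $\|G\|_p\approx n^{1/p}\|G\|_\infty$ used to establish the third regime of Theorem~A), will give $\Prob\{\|G\|_{p_+}\ge\M\|G\|_{p_+}+c_\delta/\sqrt{\log n}\}\ge c_\delta'$ and the symmetric left-tail bound. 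On the constant-probability event that $\|u\|_p$ lies above and $\|v\|_p$ below $\M\|G\|_{p_+}$ by at least $c_\delta/\sqrt{\log n}$, the distortion of $E$ exceeds $1+w_\delta/\log n$, proving (\ref{eq:not-spherical}).

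The main technical obstacle is the anti-concentration estimate in Part~2, since the variance lower bound alone cannot force a constant-probability fluctuation at scale $\sqrt{\Var\|G\|_{p_+}}$; it must be complemented by the finer distributional information about $\|G\|_{p_+}$ produced during the proof of the third regime of Theorem~A (extreme value theory for $\|G\|_\infty$ and the comparison with $\|G\|_p$ for $p\geq 2\log n$). Modulo this input, both halves of the corollary reduce to routine applications of Gaussian concentration and the Milman--Gordon--Schechtman net method.
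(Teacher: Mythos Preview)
Your plan has a genuine gap in each half.

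\medskip

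\textbf{Part (\ref{eq:spherical}).} The two-sided tail bound you invoke, $\Prob\{|\|G\|_p-\M\|G\|_p|>t\}\le 2\exp(-ct^2/\Var\|G\|_p)$, is not a known result. The Paouris--Valettas small deviation inequality controls only the \emph{lower} tail with variance in the denominator; for the upper tail one has nothing better than the Lipschitz bound $\exp(-ct^2)$, which at $t=\varepsilon\,\E\|G\|_p\asymp\varepsilon\sqrt{\log n}$ gives merely $n^{-c\varepsilon^2}$. This matters because your net on $S_F^{k-1}$ is taken at scale $\varepsilon\mu\asymp\varepsilon\sqrt{(\log n)/n}$ and therefore has cardinality of order $n^{k/2}$; a tail of order $n^{-c\varepsilon^2}$ cannot survive a union bound of that size, and you end up with the useless restriction $k\lesssim\varepsilon^2$. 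The paper avoids this completely. It works with the Gaussian matrix $\mathcal G$ and the unnormalized variable $\|\mathcal Gx\|_p$ (whose mean $a=\E\|G\|_p$ satisfies $a\ge 1$), applies only Chebyshev to get a pointwise tail of order $\Var\|G\|_p/(\varepsilon^2 a^2)\le n^{-v_\delta}/\varepsilon^2$, and takes an $\varepsilon$-scale net (not $\varepsilon\mu$-scale) of size $(C/\varepsilon)^k\le n^{Cw_\delta}$, extending to the full sphere by the operator-norm trick $\sup_{S^{k-1}}\|\mathcal Gx\|_p\le(1-r)^{-1}\sup_{\mathcal N}\|\mathcal Gx\|_p$. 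No super-concentration is needed; the polynomially small variance together with Chebyshev and the \emph{small} net already give $n^{-w_\delta}$.

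\medskip

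\textbf{Part (\ref{eq:not-spherical}).} You correctly flag the anti-concentration as the obstacle, but the mechanism you propose --- Gumbel fluctuations transported from $\|G\|_\infty$ via $\|G\|_p\approx n^{1/p}\|G\|_\infty$ --- is not how Theorem~A is proved (the third regime comes from truncation and Talagrand's $L_1$--$L_2$ bound, not from comparison with $\|G\|_\infty$), and at $p=(2+\delta)\log n$ that approximation is not tight enough to carry the argument. The paper extracts the anti-concentration differently: from the variance lower bound $\E(\|G\|_p-\|G'\|_p)^2\ge 2v_\delta/\log n$ together with the superconcentration estimate $\Prob\{|\|G\|_p-\|G'\|_p|\ge t\}\lesssim n^{-c't/\E\|G\|_p}$ for $1$-unconditional norms from \cite{T 2017}. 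Writing $\E(\|G\|_p-\|G'\|_p)^2=2\int_0^\infty t\,\Prob\{|\|G\|_p-\|G'\|_p|\ge t\}\,dt$, the tail bound makes the contribution from $t\ge C/\sqrt{\log n}$ negligible, and the remaining integral forces $\Prob\{|\|G\|_p-\|G'\|_p|\ge\sqrt{v_\delta/\log n}\}\gtrsim v_\delta$. Since $\|G\|_p\simeq\sqrt{\log n}$ with overwhelming probability, this yields the required ratio bound directly --- no extreme value theory is needed.
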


Our result highlights an interesting characteristic of the order statistics of the standard Gaussian vector $G=(g_1,g_2,\dots,g_n)$.
Let $(g_1^*, g_2^*,\dots,g_n^*)$ be the non-increasing rearrangement of the vector of absolute values $(|g_1|,|g_2|,\dots,|g_n|)$.
Then  Chernoff--type estimates imply that order statistics $g_i^*$ for $i$ relatively large,
say, at least a positive constant power of $n$, are strongly concentrated, so
that their typical fluctuations are small (at most a negative constant power of $n$).
Thus, the large (logarithmic in $n$) fluctuation of $\|G\|_p$ for $p\geq (2+\delta)\log n$
is due to the fact that the $p$-th powers of the first few order statistics comprise a relatively large portion of the
sum $\|G\|_p^p=\sum_{i=1}^n (g_i^*)^p$ with a significant probability, whereas for $p\leq (2-\delta)\log n$
the $p$-th powers of the first order statistics are typically hugely dominated by the total sum $\|G\|_p^p$.

\bigskip

Our technique of proving Theorem~A
is in certain aspects similar to \cite{PVZ}.
As in \cite{PVZ}, a crucial role in our argument is played by Talagrand's $L_1-L_2$ bound
(see Theorem~\ref{t:talagrand} in the next section), which allows to get sharper estimates
for the variance than the Poincar\'e inequality.
Another important step, also presented in \cite{PVZ}, consists in obtaining strong
upper bounds for negative moments of $\ell_p^n$--norms.
Our approach to bounding the moments is completely different from the one used in \cite{PVZ},
as, instead of relying on general Gaussian inequalities, we employ
a rather elementary but efficient technique involving lower deviation estimates for
the order statistics of random vectors. This allows us to get strong estimates including the case $p\approx 2\log n$
i.e.\ in the range not treated in \cite{PVZ}.
A principal new ingredient to our proof, compared to \cite{PVZ}, is the use of truncated Gaussians.
For a number $M>0$, we consider an auxiliary
function
$$f(G):=\bigg(\sum_{i=1}^n\min(M,|g_i|)^p\bigg)^{1/p},$$
and use a trivial inequality $\Var\|G\|_p\leq 2\E(\|G\|_p-f(G))^2+2\Var f(G)$.
It turns out that, for a carefully chosen truncation level $M$ (the right choice is not straightforward),
both terms in the last inequality can be estimated in an optimal way by combining Talagrand's
$L_1-L_2$ theorem with rather elementary
probabilistic arguments and bounds for truncated moments of Gaussian variables.
The truncation technique is also used to obtain matching lower bounds for the variance.
We will discuss this approach in more detail at the beginning of Section~\ref{s:upper}.

The organization of the rest of the paper is as follows. In Section~\ref{s:prelim}
we discuss notation and state several facts important for our work
as well as provide a detailed derivation of upper and lower bounds for truncated Gaussian moments (Section \ref{ss:trunc moment})
and lower deviation estimates for Gaussian order statistics, using Chernoff's inequality (Section \ref{ss:chernoff}).
In Section~\ref{s:negative} we provide upper bounds for the negative moments
of $\ell_p^n$--norms in terms of quantiles of the Gaussian distribution.
In Section~\ref{s:upper} we obtain upper bounds for the variance, and in Section~\ref{s:lower}
derive matching lower bounds.
For reader's convenience, we give a proof of Corollary~B in Section~\ref{s:corB}.

\section{Preliminaries}
\label{s:prelim}

Let us start with notation and some basic facts that will be useful for us.
The canonical inner product in $\R^n$ is denoted by $\langle \cdot,\cdot\rangle$.
Given a vector
$x=(x_1,x_2\dots,x_n)\in\R^n$ and a real number $1\leq p<\infty$, the standard $\ell_p^n$--norm of $x$ is defined as
$$
\|x\|_p:=\Big(\sum_{i=1}^n|x_i|^p\Big)^{1/p}.
$$
Additionally, the $\ell_\infty^n$--norm $\|x\|_\infty:=\max_{i\leq n}|x_i|$.
The following relation is true for any $x\in\R^n$:
\begin{equation}\label{eq:pq}
\|x\|_q\leq\|x\|_p\leq n^{1/p-1/q}\|x\|_q,\quad 1\le p\le q\leq\infty.
\end{equation}

Given a real number $t$, $\lfloor t\rfloor$ is the largest integer not exceeding $t$.
Universal constants are denoted by $C,c,\widetilde c$, etc.\
and their value may be different on different occasions.
Given two quantities $a$ and $b$, we write $a\simeq b$ whenever there is a universal constant $C\neq 0$
with $C^{-1} a\leq b\leq C a$. Further, for two non-negative quantities $a,b$ we write $a\lesssim b$
($a\gtrsim b$) if there is a universal constant $C>0$ with $a\leq Cb$ (respectively, $Ca\geq b$).
Sometimes it will be convenient for us to write the relation $a\lesssim b$ as $a=O(b)$.

\bigskip

The expectation of a random variable $Z$ will be denoted by $\E Z$, the variance --- by $\Var Z$,
and the median --- by $\M Z$.
Given an event $\Event$, by $\chi_{\Event}$ we denote the indicator function of $\Event$.
Throughout the text, standard Gaussian variables will be denoted by $g,g_1,g_2,\dots$
and the standard Gaussian vector in $\R^n$ --- by $G$.
It is well known (see, for example, \cite[Chapter~7]{F:68}) that the Gaussian distribution satisfies the relations
\begin{equation}\label{eq:normal approx}
\sqrt{\frac{2}{\pi}}\Big(\frac{1}{t}-\frac{1}{t^3}\Big)
e^{-t^2/2}<\Prob\big\{|g|\geq t\big\}< \sqrt{\frac{2}{\pi}}\frac{1}{t}e^{-t^2/2},\quad\quad t>0.
\end{equation}
The absolute moments of a standard Gaussian variable are given by
\begin{equation}\label{eq:gaussian moments}
\E|g|^p=\frac{1}{\sqrt{\pi}}2^{p/2}\,\Gamma\Big(\frac{p+1}{2}\Big),\quad\quad p>-1.
\end{equation}

The next theorem is a basis for our analysis; its ``discrete'' version was proved by M.~Talagrand in \cite{Talagrand}.
\begin{theor}[{Talagrand's $L_1\text{--}L_2$ bound; see
\cite{CEL}, \cite[Chapter~5]{Ch:13}}]\label{t:talagrand}
Suppose $f$ is an absolutely continuous function in $\R^n$ and let $\partial_i f$ ($i\leq n$)
be the partial derivatives of $f$.
Then we have
$$\Var\big(f(G)\big)\leq C\sum\limits_{i=1}^n \frac{\E|\partial_i f(G)|^2}
{1+\log\big(\sqrt{\E|\partial_i f(G)|^2}/\E|\partial_i f(G)|\big)},$$
where $C>0$ is a universal constant.
\end{theor}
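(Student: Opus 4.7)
The plan is to derive the $L_1$--$L_2$ bound from the Ornstein--Uhlenbeck semigroup representation of the Gaussian variance, combined with Nelson's hypercontractivity. Let $(P_t)_{t\geq 0}$ denote the OU semigroup, $P_t h(x) = \E\bigl[h(e^{-t}x + \sqrt{1-e^{-2t}}\,G)\bigr]$. Differentiating $t\mapsto \E\bigl[(P_t f)(G)^2\bigr]$, which tends to $(\E f(G))^2$ as $t\to\infty$, and using Gaussian integration by parts together with the commutation $\partial_i P_t = e^{-t} P_t \partial_i$, one obtains the well-known representation
$$
\Var\bigl(f(G)\bigr) \;=\; \sum_{i=1}^n \int_0^\infty e^{-t}\,\E\bigl[(\partial_i f)(G)\cdot (P_t \partial_i f)(G)\bigr]\,dt.
$$

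Fix $i$ and set $h = \partial_i f$. By Cauchy--Schwarz, $\E[h\cdot P_t h] \leq \|h\|_2\,\|P_t h\|_2$, and Nelson's hypercontractivity gives $\|P_t h\|_2 \leq \|h\|_{1+e^{-2t}}$. Interpolating by log-convexity of $p\mapsto \|h\|_p$ between $p=1$ and $p=2$ yields $\|h\|_{1+e^{-2t}} \leq \|h\|_1^{\lambda(t)}\|h\|_2^{1-\lambda(t)}$ with $\lambda(t) = (1-e^{-2t})/(1+e^{-2t}) = \tanh(t)$. The key qualitative feature is $\lambda(t) \gtrsim \min(t,1)$, since $\tanh$ is concave on $[0,1]$ and $\tanh(t)\geq \tanh(1)$ for $t\geq 1$.

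Put $a = \|h\|_1$, $b = \|h\|_2$, and $r = \log(b/a) \geq 0$. Then the $i$-th summand is at most
$$
b^2 \int_0^\infty e^{-t - r\lambda(t)}\,dt \;\lesssim\; b^2\!\left(\int_0^1 e^{-t(1+cr)}dt + \int_1^\infty e^{-t-r}dt\right) \;\lesssim\; \frac{b^2}{1+r},
$$
where the last step uses $(1+r)e^{-r}\leq 1$ for $r\geq 0$. Since $r = \log(\|\partial_i f\|_2/\|\partial_i f\|_1)$, summing over $i$ yields exactly the stated inequality.

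The main obstacle is regularity: the semigroup computations above are cleanest for smooth, rapidly decreasing $f$, so for an absolutely continuous $f$ one must first mollify (for instance by convolution with small Gaussian kernels), apply the bound to the mollifications, and pass to the limit, discarding trivially the case where some $\|\partial_i f\|_2 = \infty$. The degenerate case $\|\partial_i f\|_1 = 0$ forces $\partial_i f \equiv 0$ a.s.\ (so the $i$-th term drops out), and the denominator $1 + \log(\|h\|_2/\|h\|_1)$ is always $\geq 1$ because $\|h\|_1 \leq \|h\|_2$, so no singularity arises.
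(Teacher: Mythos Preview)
The paper does not prove this theorem; it is stated with references to \cite{CEL} and \cite[Chapter~5]{Ch:13} and then used as a black box. Your argument is correct and is precisely the standard proof found in those references: the OU--semigroup covariance formula, Nelson hypercontractivity $\|P_t h\|_2\le\|h\|_{1+e^{-2t}}$, the log-convexity interpolation yielding the exponent $\tanh t$, and the elementary integral bound $\int_0^\infty e^{-t-r\tanh t}\,dt\lesssim (1+r)^{-1}$.

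One small cosmetic point: in the tail piece $\int_1^\infty e^{-t-r\lambda(t)}\,dt$ you wrote $e^{-t-r}$, but $\lambda(t)=\tanh t\le 1$ strictly, so the exponent is really $-t-r\tanh(1)$; this is harmless since $(1+r)e^{-\tanh(1)\,r}$ is still bounded and the conclusion $\lesssim 1/(1+r)$ holds with an adjusted constant. Otherwise the derivation, including the mollification remark for merely absolutely continuous $f$, is in order.
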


\subsection{Bounds for truncated moments of Gaussian variables}
\label{ss:trunc moment}

In this subsection, we derive rather elementary upper and lower bounds for high moments of random variables
of the form $|g|\chi_{\{|g|\leq a\}}$ and $\min(|g|,a)$ for a fixed $a>0$. The results presented here
are by no means new, but may be hard to locate in literature.
For reader's convenience, we provide proofs.

Let us start with a simple calculus lemma.
\begin{lemma}
\label{l:inc-gamma}
Fix $0<a<\infty$. Let $f$ be a positive log-concave function on $[0,a]$,
and let $x_{\max}\in[0,a]$ be a point of global maximum for
$f$. Define
\begin{align*}
&x_\ell:=\min\big\{y\in [0, x_{\max}]:\, f(y)\ge f(x_{\max})/2\big\},
\\
&x_r:=\max\big\{y\in [x_{\max}, a]:\, f(y)\ge f(x_{\max})/2\big\}.
\end{align*}
Then
$$
\frac{1}{2}(x_r-x_\ell)f(x_{\max})\le\int_0^a f(x)\,dx\le 2(x_r-x_\ell)f(x_{\max}).
$$
\end{lemma}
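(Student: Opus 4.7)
The plan is to exploit log-concavity of $f$, which forces the super-level set $\{x\in[0,a]:f(x)\ge f(x_{\max})/2\}$ to be a (closed) sub-interval of $[0,a]$, equal to $[x_\ell,x_r]$ by definition. I would split $\int_0^a f(x)\,dx$ into the three pieces on $[0,x_\ell]$, $[x_\ell,x_r]$, and $[x_r,a]$, and estimate each separately.

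For the lower bound, applying concavity of $\log f$ on the two subintervals $[x_\ell,x_{\max}]$ and $[x_{\max},x_r]$ (using that $\log f$ at all three endpoints is at least $\log(f(x_{\max})/2)$) yields $f\ge f(x_{\max})/2$ pointwise on $[x_\ell,x_r]$. Integrating over that middle piece alone already gives $\int_0^a f\ge \tfrac12(x_r-x_\ell)f(x_{\max})$.

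For the upper bound, the middle piece contributes at most $(x_r-x_\ell)f(x_{\max})$ trivially since $f\le f(x_{\max})$. The main technical step is controlling the tails. Assume $x_\ell>0$ (otherwise the left tail is empty). Concavity of $\log f$ on $[0,x_{\max}]$ implies that the slope of $\log f$ over any interval $[x,x_\ell]$ with $x<x_\ell$ is at least its slope over $[x_\ell,x_{\max}]$, which equals $\log 2/(x_{\max}-x_\ell)$. This yields an exponential pointwise bound $f(x)\le \tfrac12 f(x_{\max})\,2^{-(x_\ell-x)/(x_{\max}-x_\ell)}$, and a routine integration produces $\int_0^{x_\ell}f\le (x_{\max}-x_\ell)f(x_{\max})/(2\log 2)$. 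A symmetric argument on $[x_r,a]$ handles the right tail. Summing the three contributions, $\int_0^a f\le \bigl(1+\tfrac{1}{2\log 2}\bigr)(x_r-x_\ell)f(x_{\max})<2(x_r-x_\ell)f(x_{\max})$, which is the claimed inequality.

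The main (mild) obstacle is just bookkeeping the edge cases $x_\ell=0$ or $x_r=a$, where one of the tail integrals is absent and the bound only improves, so no substantive additional work is required. The essential analytic content is entirely the one-sided exponential decay estimate extracted from concavity of $\log f$.
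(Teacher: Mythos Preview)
Your argument is correct. The paper does not supply a proof of this lemma at all; it is stated as an elementary calculus fact and then immediately used. Your approach --- identifying $[x_\ell,x_r]$ as the (convex) super-level set $\{f\ge f(x_{\max})/2\}$, getting the lower bound trivially from the middle piece, and controlling each tail via the exponential decay estimate
\[
f(x)\le \tfrac12 f(x_{\max})\,2^{-(x_\ell-x)/(x_{\max}-x_\ell)},\qquad x\in[0,x_\ell],
\]
which comes from comparing slopes of the concave function $\log f$ --- is the standard and natural proof. The numerical check $1+\tfrac{1}{2\log 2}\approx 1.72<2$ is correct, and the boundary cases $x_\ell=0$ or $x_r=a$ only help, as you note.
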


As a consequence of the above statement, we get
\begin{lemma}\label{l:incom gamma}
Let $q,a\geq 1$ be some real numbers.
\begin{itemize}
\item If $q\leq a^2$ then
$$(q/e)^{q/2}\lesssim\int_0^a x^q e^{-x^2/2}\,dx\lesssim (q/e)^{q/2};$$

\item If $q\geq a^2$ then
$$\frac{a^{q+1}e^{-a^2/2}}{a+q-a^2}\lesssim\int_0^a x^q e^{-x^2/2}\,dx\lesssim \frac{a^{q+1}e^{-a^2/2}}{a+q-a^2}.$$
\end{itemize}
\end{lemma}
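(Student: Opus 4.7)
The plan is to apply Lemma~\ref{l:inc-gamma} to $f(x) := x^q e^{-x^2/2}$ on $[0, a]$. Log-concavity is immediate from $(\log f)''(x) = -q/x^2 - 1 < 0$, and $f$ attains its global (unconstrained) maximum at $x^\star := \sqrt q$ with $f(x^\star) = (q/e)^{q/2}$. The two bullets correspond to whether $x^\star$ lies inside or outside $[0,a]$.

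For the first bullet, $q \le a^2$ forces $x^\star \le a$, so $x_{\max} = x^\star$ and $f(x_{\max}) = (q/e)^{q/2}$. It then suffices to prove $x_r - x_\ell \simeq 1$. Rescaling $y = x/\sqrt q$ turns the equation $f(x) = f(x^\star)/2$ into $\phi(y) = -(\log 2)/q$, where $\phi(y) := \log y - (y^2-1)/2$ satisfies $\phi(1) = \phi'(1) = 0$ and $|\phi''(y)|$ is bounded between positive universal constants on $[1/2, 2]$. Taylor's theorem then gives $-\phi(y) \simeq (y-1)^2$ there, so the half-max points lie at $|y-1| \simeq 1/\sqrt q$, i.e.\ at distance $\simeq 1$ from $\sqrt q$. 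Log-concavity of $f$ excludes further crossings, and even when the right half-max gets clipped by $a$ we retain $x_{\max} - x_\ell \gtrsim 1$. Lemma~\ref{l:inc-gamma} then produces $\int \simeq (q/e)^{q/2}$.

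For the second bullet, $q \ge a^2$ makes $f$ non-decreasing on $[0,a]$, so $x_{\max} = x_r = a$ and $f(x_{\max}) = a^q e^{-a^2/2}$; the whole task is to show $a - x_\ell \simeq a/(a + q - a^2)$. Writing $x = a(1-s)$ and using $s + s^2/2 \le \log\tfrac{1}{1-s} \le s + s^2$ on $[0, 1/2]$ yields the two-sided bound
$$
(q - a^2)\,s + \tfrac{1}{2}(q + a^2)\,s^2 \;\le\; \log\frac{f(a)}{f(a(1-s))} \;\le\; (q - a^2)\,s + (q + a^2)\,s^2,
$$
a sum of two non-negative terms under $q \ge a^2$. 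Setting this expression equal to $\log 2$ and balancing the linear against the quadratic contribution pins down $s_\ell \simeq \min\bigl(1/(q-a^2),\ 1/\sqrt{q+a^2}\bigr) \simeq 1/(a + q - a^2)$; the last equivalence is verified separately on $q - a^2 \lesssim a$ (where $\sqrt{q+a^2} \simeq a$) and on $q - a^2 \gtrsim a$ (where $q - a^2 \simeq a + q - a^2$). Hence $a - x_\ell = a\, s_\ell \simeq a/(a + q - a^2)$, and Lemma~\ref{l:inc-gamma} delivers the stated bound. The main obstacle is precisely this balancing step together with verifying $\min(1/(q-a^2), 1/\sqrt{q+a^2}) \simeq 1/(a + q - a^2)$ uniformly across the transition from $q \approx a^2$ to $q \gg a^2$; the regime of small $a$, where the Taylor restriction $s \le 1/2$ may fail, concerns only bounded quantities and can be handled by a direct comparison of universal constants.
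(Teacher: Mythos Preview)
Your proposal is correct and follows essentially the same route as the paper: apply Lemma~\ref{l:inc-gamma} to $f(x)=x^q e^{-x^2/2}$ and locate the half-maximum points by a local Taylor analysis of $\log f$. The only differences are cosmetic---you substitute $y=x/\sqrt q$ where the paper uses $z=x^2/q-1$, and in the second bullet you phrase the estimate as ``balancing linear against quadratic'' whereas the paper plugs in the explicit test points $x=a-\widetilde c\,a/(a+q-a^2)$ and $x=a-\widetilde C\,a/(a+q-a^2)$ and checks the sign of $\log f(x)-\log\tfrac12 f(a)$ directly.
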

\begin{proof}
It is easy to see that $\sqrt{q}$ is the point of global maximum of the log-concave function
$f(x):=x^q e^{-x^2/2}$ on $[0,\infty)$, and $f(\sqrt{q})=(q/e)^{q/2}$.
We will use the last lemma to evaluate the integrals.

\underline{The case $q\leq a^2$.}
We can assume without loss of generality that $q$ is large (greater than a large absolute constant).
To get the desired bound it is enough to show that $x_r-x_\ell\simeq 1$,
where $x_{r}>x_\ell$ are the two solutions of the equation $2f(x)=f(\sqrt{q})=(q/e)^{q/2}$.
We can rewrite the equation in the form
\begin{equation}\label{eq: aux09843t}
\log (z+1)+\frac{\log 4}{q}=z,\quad z={x^2}/{q}-1.
\end{equation}
Since $q$ is large, we can assume that all solutions of the last equation satisfy $z\in[-c,c]$
for a small constant $c>0$.
Then, using Taylor's expansion for the logarithm, we obtain
$$z\simeq\pm\frac{1}{\sqrt{q}},$$
for the two solutions of \eqref{eq: aux09843t}.
Hence,
$$
x_r^2-q\simeq \sqrt{q},\quad
x_\ell^2-q\simeq -\sqrt{q}.
$$
The result follows.

\medskip

\underline{The case $q\geq a^2$.} Let $x_\ell$, $x_r$, $x_{\max}$ be defined as in Lemma~\ref{l:inc-gamma}.
We have $x_{\max}=x_r=a$, and $f(x_{\max})=f(a)=a^q\exp(-a^2/2)$.
To get the desired bounds, it suffices to show that there exist constants $c_1$, $c_2$ such that
$$
\frac{c_1a}{a+q-a^2}\le a-x_\ell\le \frac{c_2a}{a+q-a^2},
$$
where $2{x_\ell}^q\exp(-{x_\ell}^2/2)=f(a)$, and then apply Lemma \ref{l:inc-gamma}.
We will rely on the fact that $f(x)$ is strictly increasing on $[0,a]$.

For a sufficiently small universal constant $\widetilde c>0$ we have
$\log(1-z)>-z-z^2$, $|z|<\widetilde c$.
Hence, for $x:=a-\frac{\widetilde c a}{a+q-a^2}$ we obtain
\begin{align*}
\log f(x)&=q\log x-\frac{x^2}{2}\\
&=q\log a+q \log \Big(1-\frac{a-x}{a}\Big)-\frac{a^2}{2}+a(a-x)-\frac{(a-x)^2}{2}\\
&\geq \log f(a)+q \Big(-\frac{a-x}{a}-\frac{(a-x)^2}{a^2}\Big)+a(a-x)-\frac{(a-x)^2}{2}\\
&\geq \log f(a)-\frac{\widetilde c\,(q-a^2)}{a+q-a^2}-\frac{\widetilde c^2 q}{(a+q-a^2)^2}-\frac{1}{2}\widetilde c^2\\
&>\log f(a)-\log 2,
\end{align*}
where in the last inequality we used the condition $q\geq a^2$ and the fact that $\widetilde c$ is small.
Thus, $f(x)\geq\frac{1}{2}f(a)$ whence $x_\ell\leq a-\frac{\widetilde c a}{a+q-a^2}$.

Now, choose $x:=a-\frac{\widetilde C a}{a+q-a^2}$, where $\widetilde C>0$
is a large enough universal constant (say, $\widetilde C=10$ definitely suffices).
If $x<0$ then obviously $x_\ell\geq x$, and we are done.
Otherwise, we use the trivial relation
$\log(1-z)\leq -z$, $z\in(-\infty,1)$, to obtain
\begin{align*}
\log f(x)&=q\log a+q \log \Big(1-\frac{a-x}{a}\Big)-\frac{a^2}{2}+a(a-x)-\frac{(a-x)^2}{2}\\
&\leq\log f(a)-\frac{(q-a^2)(a-x)}{a}-\frac{(a-x)^2}{2}\\
&= \log f(a)- \frac{\widetilde C\,(q-a^2)}{a+q-a^2}-\frac{\widetilde C^2}{2}\frac{a^2}{(a+q-a^2)^2}.
\end{align*}
When $q\leq a+a^2$, the last term is less than $-\frac{1}{8}\widetilde C^2$, whereas for $q\geq a+a^2$,
the second term is less than $-\frac{1}{2}\widetilde C$. In any case, we get $f(x)\leq \frac{1}{2}f(a)$,
whence $x_\ell\geq a-\frac{\widetilde C a}{a+q-a^2}$. The result follows.
\end{proof}

\begin{cor}\label{c:moments}
Let $q,a\geq 1$ be some real numbers.

(i) If $q\leq a^2$ then
$$
(q/e)^{q/2}\lesssim\E\big(|g|\chi_{\{|g|\leq a\}}\big)^q\leq\E\min\big(|g|,a\big)^q\lesssim (q/e)^{q/2};
$$

(ii) If $q\geq a^2$ then
$$
\frac{a^{q+1}e^{-a^2/2}}{a+q-a^2}\lesssim\E\big(|g|\chi_{\{|g|\leq a\}}\big)^q
\leq\E\min\big(|g|,a\big)^q\lesssim\frac{qa^{q-1}e^{-a^2/2}}{a+q-a^2}.
$$

(iii) In particular, if  $\tau q\leq a^2\le q$ for some $\tau\in(0,1)$, then
$$
\frac{a^{q+1}e^{-a^2/2}}{a+q-a^2}\lesssim\E\big(|g|\chi_{\{|g|\leq a\}}\big)^q
\leq\E\min\big(|g|,a\big)^q\lesssim \frac{1}{\tau}\frac{a^{q+1}e^{-a^2/2}}{a+q-a^2}.
$$
\end{cor}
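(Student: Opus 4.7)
The plan is to reduce everything to the integral estimates already supplied by Lemma~\ref{l:incom gamma}. Writing out the half-Gaussian density, we have
$$
\E\big(|g|\chi_{\{|g|\leq a\}}\big)^q = \sqrt{2/\pi}\,\int_0^a x^q e^{-x^2/2}\,dx,
\qquad
\E\min(|g|,a)^q = \sqrt{2/\pi}\,\int_0^a x^q e^{-x^2/2}\,dx + a^q\,\Prob\{|g|>a\}.
$$
The middle inequality $\E(|g|\chi_{\{|g|\leq a\}})^q\le\E\min(|g|,a)^q$ is immediate from the pointwise inequality of integrands, and both lower bounds (in (i) and (ii)) follow at once from the lower halves of Lemma~\ref{l:incom gamma} applied to $\int_0^a x^q e^{-x^2/2}\,dx$. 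The only real work, then, is to show that the tail term $a^q\Prob\{|g|>a\}$ is absorbed by the target upper bound in each regime; for this I would use the Gaussian tail estimate \eqref{eq:normal approx}, which gives $a^q\,\Prob\{|g|>a\}\lesssim a^{q-1}e^{-a^2/2}$.

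For part~(i), where $q\le a^2$, I would exploit the fact that $x\mapsto x^q e^{-x^2/2}$ attains its global maximum $(q/e)^{q/2}$ at $x=\sqrt q$ and is decreasing on $[\sqrt q,\infty)$. Since $a\ge\sqrt q$, this yields $a^q e^{-a^2/2}\le(q/e)^{q/2}$, and hence $a^{q-1}e^{-a^2/2}\le(q/e)^{q/2}/a\lesssim(q/e)^{q/2}$, so the tail is dominated by the bound from Lemma~\ref{l:incom gamma}. For part~(ii), where $q\ge a^2$, Lemma~\ref{l:incom gamma} furnishes $\int_0^a x^q e^{-x^2/2}\,dx\simeq\frac{a^{q+1}e^{-a^2/2}}{a+q-a^2}$, and adding the tail contribution gives
$$
\frac{a^{q+1}e^{-a^2/2}}{a+q-a^2}+a^{q-1}e^{-a^2/2}
=\frac{a^{q-1}e^{-a^2/2}\,(a+q)}{a+q-a^2}
\lesssim \frac{q\,a^{q-1}e^{-a^2/2}}{a+q-a^2},
$$
where in the last step I use $a\le a^2\le q$ (valid since $a\ge1$) to replace $a+q$ by $\lesssim q$. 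This is exactly the upper bound claimed in (ii).

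Part~(iii) is then a purely algebraic consequence of (ii): under the hypothesis $\tau q\le a^2$ we have $q a^{q-1}\le a^{q+1}/\tau$, so the upper bound from (ii) becomes $\lesssim \frac{1}{\tau}\cdot\frac{a^{q+1}e^{-a^2/2}}{a+q-a^2}$, while the lower bound is inherited from (ii) (which applies since $a^2\le q$). I do not anticipate any substantial obstacle here, since the analytic content has already been isolated in Lemma~\ref{l:incom gamma}; the only point that requires a moment's care is that in the regime $q\ge a^2$ the tail term $a^{q-1}e^{-a^2/2}$ can dominate the main integral when $a+q-a^2$ is much larger than $a^2$, which is precisely why the cleanest upper bound in (ii) carries the factor $q$ rather than $a^2$ in the numerator.
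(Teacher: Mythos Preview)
Your proposal is correct and follows essentially the same route as the paper: both write out the two expectations as $\sqrt{2/\pi}\int_0^a x^q e^{-x^2/2}\,dx$ plus (for the $\min$ case) a Gaussian tail term bounded via \eqref{eq:normal approx} by $\lesssim a^{q-1}e^{-a^2/2}$, invoke Lemma~\ref{l:incom gamma} for the integral, and then for~(i) absorb the tail using $\max_x x^q e^{-x^2/2}=(q/e)^{q/2}$ while for~(ii)--(iii) combine the tail with the integral exactly as you do. Your write-up is in fact more explicit than the paper's (which dispatches~(ii) and~(iii) in a single sentence), but the argument is the same.
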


\begin{proof}
Since
\begin{align*}
&\E\big(|g|\chi_{\{|g|\leq a\}}\big)^q=\sqrt{2/\pi}\int_0^a t^q e^{-t^2/2}\,dt\quad\mbox{and}\quad
\\
&\E\min\big(|g|,a\big)^q=\sqrt{2/\pi}\int_0^a t^q e^{-t^2/2}\,dt+\sqrt{2/\pi}a^q\int_a^\infty e^{-t^2/2}\,dt,
\end{align*}
then, applying \eqref{eq:normal approx}, we get
$$
\E\big(|g|\chi_{\{|g|\leq a\}}\big)^q\leq\E\min\big(|g|,a\big)^q\leq \E\big(|g|\chi_{\{|g|\leq a\}}\big)^q+\sqrt{2/\pi}a^{q-1}e^{-a^2/2}.
$$
This and the second part of Lemma \ref{l:incom gamma} yield the assertion for $q\geq a^2$ and for $\tau q\leq a^2\le q$. The case $q\leq a^2$ follows from the first part of Lemma \ref{l:incom gamma} and the fact that $\max_\R t^q e^{-t^2/2}=q^{q/2}e^{-q/2}$.
\end{proof}

\begin{rem}
The last statement asserts that, for $a^2\geq q$, the $q$-th moments of the truncated variables
$\min(|g|,a)$ and $|g|\chi_{\{|g|\leq a\}}$ are equivalent, with a constant multiple,
to the (not truncated) absolute moment $\E|g|^q$ (see \ref{eq:gaussian moments}).
\end{rem}

\subsection{Chernoff--type bounds for order statistics}\label{ss:chernoff}

Given any number $\alpha\in[0,1)$, the {\it quantile of order $\alpha$} with respect to the distribution of $|g|$
is the number $\xi_\alpha$ satisfying $\Prob\{|g|\leq \xi_\alpha\}=\alpha$.
It follows from \eqref{eq:normal approx} that
\begin{equation}
\label{eq:Feller quantile}
\xi_\alpha^{-1}\exp(-\xi_\alpha^2/2)\simeq 1-\alpha,\quad \alpha\geq 1/2.
\end{equation}
Standard estimates for quantiles of the Gaussian distribution
(see, for example, \cite[p.~264]{David}) imply that for $1\leq i\leq n/2$ we have
\begin{equation}
\label{eq:quantile}
\Big|\xi_{1-i/n}-\sqrt{2\log (n/i)}+\frac{\frac{1}{2}\log(\log (n/i))}{\sqrt{2\log (n/i)}}\Big|\lesssim\frac{1}{\sqrt{\log (n/i)}}.
\end{equation}

Further, for the standard Gaussian vector $G=(g_1,g_2,\dots,g_n)$ in $\R^n$, the order statistics of $G$,
denoted by $g_1^*,g_2^*,\dots,g_n^*$,
are the non-increasing rearrangement of the vector of absolute values $(|g_1|,|g_2|,\dots,|g_n|)$.
Given $\beta\in(0,1)$, we have
\begin{align*}
\Prob\{g_i^*\leq \xi_{1-\beta}\}=\sum_{j=0}^{i-1} {n\choose j}\Prob\{|g|\geq \xi_{1-\beta}\}^j\,\Prob\{|g|\leq \xi_{1-\beta}\}^{n-j}
=\sum_{j=0}^{i-1} {n\choose j}\beta^j (1-\beta)^{n-j}.
\end{align*}
It follows from Chernoff's theorem for the partial binomial sums (see \cite{Chernoff} for the original result,
or \cite[p.~24]{Boucheron} as a modern reference) that for $i\leq \beta n$ we have
$$\Prob\{g_i^*\leq \xi_{1-\beta}\}\leq\exp\Big((i-1)\log\frac{\beta n}{i-1}+(n-i+1)\log\frac{n-\beta n}{n-i+1}\Big).$$
Applying the relation $\log(1+t)\leq t-t^2/(2+2t)$ ($t\geq 0$), we get
\begin{align}\label{eq:chernoff leq}
\Prob\{g_i^*\leq \xi_{1-\beta}\}\leq\exp\Big(-\frac{(\beta n-i+1)^2}{2\beta n}\Big),\quad 1\leq i\leq \beta n.
\end{align}

The relation~\eqref{eq:chernoff leq} allows to derive deviation inequalities
for order statistics. Let us remark at this point
that, although order statistics are systematically studied in literature (see classical book \cite{David}, or paper
\cite{BT} as an example of recent developments), we were not able to locate results
in a form convenient for us. For completeness, we provide proofs of next three lemmas.

\begin{lemma}[Lower deviation for large order statistics]
\label{l:lower dev initial}
There are universal constants $C,c>0$ with the following property.
Assume that $n$ is large, and that $1\leq i\leq \sqrt{n}$. Let $\frac{1}{\sqrt{\log n}}\leq u\leq 1-\frac{C}{\log n}$. Then
$$\Prob\big\{g_i^*\leq u\,\xi_{1-i/n}\big\}
\leq\exp\bigg(-\frac{c\,i}{u}\Big(\frac{n}{i\,\sqrt{\log n}}\Big)^{1-u^2}\bigg).$$
\end{lemma}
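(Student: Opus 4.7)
The plan is to apply the Chernoff--type bound \eqref{eq:chernoff leq} after choosing $\beta$ so that $\xi_{1-\beta}=u\,\xi_{1-i/n}$, i.e.\ $\beta:=\Prob\{|g|>u\,\xi_{1-i/n}\}$; then the event in question is exactly $\{g_i^*\le\xi_{1-\beta}\}$. Writing $\xi:=\xi_{1-i/n}$, the assumption $i\le\sqrt{n}$ combined with \eqref{eq:quantile} gives $\xi\simeq\sqrt{\log n}$, and together with $u\ge 1/\sqrt{\log n}$ this makes $u\xi$ bounded below by a universal positive constant for $n$ large, so that both quantiles $\xi$ and $u\xi$ sit above the median of $|g|$ and the asymptotic formula \eqref{eq:Feller quantile} applies at both.

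Next I would compute $\beta n$. From \eqref{eq:Feller quantile} at the quantile $1-i/n$ one has $e^{-\xi^2/2}\simeq (i/n)\,\xi$; raising to the $u^2$--th power yields $e^{-u^2\xi^2/2}\simeq (i\xi/n)^{u^2}$, and a second application of \eqref{eq:Feller quantile} at $1-\beta$ gives
$$
\beta\simeq\frac{1}{u\xi}\,e^{-u^2\xi^2/2}\simeq\frac{1}{u\xi}\Big(\frac{i\xi}{n}\Big)^{u^2}.
$$
Multiplying by $n$ and using $\xi\simeq\sqrt{\log n}$,
$$
\beta n\simeq\frac{i}{u}\Big(\frac{n}{i\xi}\Big)^{1-u^2}\simeq\frac{i}{u}\Big(\frac{n}{i\sqrt{\log n}}\Big)^{1-u^2},
$$
which is exactly the quantity that must appear in the exponent of the lemma.

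To use \eqref{eq:chernoff leq} in the strong form $(\beta n-i+1)^2/(2\beta n)\ge\beta n/8$, I need $\beta n\ge 2i$. For this I would exploit the hypothesis $u\le 1-C/\log n$, which gives $1-u^2\ge C/\log n$. Since $i\le\sqrt{n}$ yields $\log(n/(i\sqrt{\log n}))\ge\tfrac13\log n$ for $n$ large, one obtains
$$
\Big(\frac{n}{i\sqrt{\log n}}\Big)^{1-u^2}\ge \exp(C/3),
$$
so by choosing the absolute constant $C$ large enough (relative to the universal constants implicit in \eqref{eq:Feller quantile}) the ratio $\beta n/i$ is forced to exceed $2$. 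Plugging $\beta n\ge 2i$ into \eqref{eq:chernoff leq} gives $\Prob\{g_i^*\le u\xi\}\le\exp(-\beta n/8)$, which is the claimed bound after absorbing the implicit factor into the constant $c$.

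In my view the argument is essentially a one--step reduction to the Chernoff inequality, and the only real obstacle is bookkeeping: one must check each use of $\simeq$ occurs in a regime where the implicit constants in \eqref{eq:Feller quantile} are genuinely universal (i.e.\ $\beta\le 1/2$ and $u\xi$ uniformly bounded away from zero), and that the constant $C$ in the hypothesis is chosen consistently with those implicit constants so that the passage from $(\beta n-i+1)^2/(2\beta n)$ to $\beta n/8$ is legitimate.
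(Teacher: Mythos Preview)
Your argument is correct and follows essentially the same route as the paper's own proof: set $\beta=1-s$ so that $\xi_{1-\beta}=u\,\xi_{1-i/n}$, use \eqref{eq:Feller quantile} twice to estimate $\beta n$, invoke the upper bound on $u$ to ensure $\beta n\ge 2i$, and then feed this into the Chernoff bound \eqref{eq:chernoff leq}. The only cosmetic difference is that the paper carries $\sqrt{\log(n/i)}$ through the intermediate steps and passes to $\sqrt{\log n}$ at the end, whereas you use $\xi\simeq\sqrt{\log n}$ from the outset (legitimate since $i\le\sqrt{n}$).
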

\begin{proof}
Let $n,i,u$ satisfy the assumptions
and let $s\in(0,1-i/n)$ be such that $\xi_s=u\,\xi_{1-i/n}$.
Observe that, in view of the lower bound on $u$
and the approximation formula \eqref{eq:quantile}, we have $\xi_s\gtrsim 1$.
Then, applying \eqref{eq:Feller quantile} twice, we get
$$
1-s\simeq \frac{1}{\xi_s}\exp\big(-{\xi_s}^2/2\big)
=\frac{{\xi^{u^2-1}_{1-i/n}}}{u\,{\xi^{u^2}_{1-i/n}} }\exp\big(-u^2{\xi^2_{1-i/n}}/2\big)
\simeq \frac{{\xi^{u^2-1}_{1-i/n}}}{u} \bigg(\frac{i}{n}\bigg)^{u^2},
$$
where, by \eqref{eq:quantile}, we have $\xi_{1-i/n}\simeq \sqrt{\log(n/i)}$.
Thus,
\begin{equation}\label{eq: aux p3iuhef}
1-s\simeq \frac{\log(n/i)^{\frac{u^2-1}{2}}}{u} \bigg(\frac{i}{n}\bigg)^{u^2}
=\frac{1}{u}\bigg(\frac{n}{i\sqrt{\log(n/i)}}\bigg)^{1-u^2}\frac{i}{n}.
\end{equation}
The assumptions $u\leq 1-\frac{C}{\log n}$ and $i\le\sqrt{n}$ imply that
$$
\Big(\frac{n}{i\sqrt{\log(n/i)}}\Big)^{1-u^2}\ge \exp\Big(\frac{C}{\log n}\log \frac{n}{i\sqrt{\log(n/i)}}\Big)\ge
\exp\Big({C}-C\frac{\log\log n}{\log n}\Big),
$$
which is bigger than a large absolute constant if $C$ is large enough, whence
$(1-s)n\gg i$.
Applying \eqref{eq:chernoff leq}, we get
$$\Prob\big\{g_i^*\leq u\xi_{1-i/n}\big\}=\Prob\big\{g_i^*\leq \xi_{s}\big\}
\leq \exp\Big(-\frac{((1-s) n-i+1)^2}{2(1-s)n}\Big)
\leq\exp\Big(-\frac{1}{8}(1-s)n\Big).$$
It remains to reuse \eqref{eq: aux p3iuhef}.
\end{proof}

\begin{lemma}[Lower deviation for intermediate order statistics]\label{l:lower dev intermediate}
There is a universal constant $c>0$ with the following property. Let $n$ be large, let $i\leq n/2$
and $u\in(0,1)$. Then
$$\Prob\big\{g_i^*\leq u\xi_{1-i/n}\big\}\leq \exp\Big(- c\,(1-u)^2\,i\,\log\frac{n}{i}\Big).$$
\end{lemma}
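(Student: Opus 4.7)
The plan is to reduce the estimate to the Chernoff bound \eqref{eq:chernoff leq} after matching the quantile threshold explicitly. Let $s\in(0,1-i/n)$ be the unique solution of $\xi_s=u\,\xi_{1-i/n}$ (well defined because $\alpha\mapsto\xi_\alpha$ is strictly increasing and $u\,\xi_{1-i/n}<\xi_{1-i/n}$), and set $\beta:=1-s$, so that $\beta>i/n$. Then $\Prob\{g_i^*\leq u\,\xi_{1-i/n}\}=\Prob\{g_i^*\leq \xi_{1-\beta}\}$, and \eqref{eq:chernoff leq} applied with this $\beta$ gives
$$
\Prob\{g_i^*\leq u\,\xi_{1-i/n}\}\leq\exp\Big(-\frac{(\beta n-i+1)^2}{2\beta n}\Big),
$$
so it suffices to prove $(\beta n-i)^2/(2\beta n)\gtrsim(1-u)^2\,i\,\log(n/i)$.

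The heart of the argument is a two-sided estimate of $\beta-i/n$, obtained by integrating the density of $|g|$ on $[0,\infty)$, which equals $\sqrt{2/\pi}\,e^{-x^2/2}$. Writing $t:=\xi_{1-i/n}$ and using monotonicity of the density,
$$
\sqrt{2/\pi}\,(1-u)\,t\,e^{-t^2/2}\;\le\;\beta-\tfrac{i}{n}\;\le\;\sqrt{2/\pi}\,(1-u)\,t\,e^{-u^2 t^2/2}.
$$
When $t\ge\sqrt{2}$, the Mills-ratio bounds \eqref{eq:normal approx} give $i/n\simeq t^{-1}e^{-t^2/2}$, so dividing yields
$$
(1-u)\,t^2\;\lesssim\;\frac{\beta n-i}{i}\;\lesssim\;(1-u)\,t^2\,\exp\big((1-u^2)t^2/2\big).
$$
For the remaining range $t\in[\xi_{1/2},\sqrt{2}]$, where $i$ is comparable to $n$, each of $t$, $t e^{-t^2/2}$, $i/n$, $\log(n/i)$ lies between two positive absolute constants, and $\beta-i/n\simeq 1-u$ directly. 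Using $t^2\gtrsim\log(n/i)$ for all $i\le n/2$ (from \eqref{eq:quantile} when $t$ is large, and trivially when $t$ is bounded), we conclude in all cases that $\beta n-i\gtrsim(1-u)\,i\,\log(n/i)$.

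To finish, split according to the size of $(1-u)\log(n/i)$. If $(1-u)\log(n/i)\le c_0$ for a sufficiently small absolute constant $c_0$, the upper bound above combined with $t^2\lesssim\log(n/i)$ gives $\beta n\le 2i$, so
$$
\frac{(\beta n-i)^2}{2\beta n}\;\gtrsim\;\frac{\big((1-u)\,i\,\log(n/i)\big)^2}{i}\;\gtrsim\;(1-u)^2\,i\,\log(n/i),
$$
where we used $\log(n/i)\ge\log 2$. If instead $(1-u)\log(n/i)\ge c_0$, then the lower estimate forces $\beta n\ge (1+\delta)\,i$ for some $\delta>0$ depending on $c_0$, whence a direct computation gives $(\beta n-i)^2/(2\beta n)\gtrsim\beta n-i\gtrsim(1-u)\,i\,\log(n/i)\ge(1-u)^2\,i\,\log(n/i)$, using $(1-u)\ge(1-u)^2$ for $u\in(0,1)$. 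The main technical nuisance is the boundary regime $i$ close to $n/2$, where $t$ is merely a positive constant and the usual approximation $t^2\simeq 2\log(n/i)$ degenerates to $t^2\simeq\log(n/i)\simeq 1$; the case split above is designed to absorb this difficulty cleanly.
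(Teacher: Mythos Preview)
Your proof is correct and follows essentially the same route as the paper: set $\beta=1-s$ with $\xi_s=u\,\xi_{1-i/n}$, bound $\beta-\tfrac{i}{n}$ from below by $\sqrt{2/\pi}\,(1-u)\,\xi_{1-i/n}\,e^{-\xi_{1-i/n}^2/2}\gtrsim(1-u)\tfrac{i}{n}\log\tfrac{n}{i}$ via the density of $|g|$, and feed this into \eqref{eq:chernoff leq}. The paper is more economical in the last step: since $x\mapsto(x-i)^2/x$ is increasing on $(i,\infty)$, the lower bound on $\beta n$ alone already yields $\tfrac{(\beta n-i)^2}{2\beta n}\gtrsim(1-u)^2 i\log\tfrac{n}{i}$, so your upper bound on $\beta-\tfrac{i}{n}$ and the ensuing case split can be dispensed with.
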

\begin{proof}
As in the proof of the above lemma, we let $s\in(0,1-i/n)$ be such that $\xi_s=u\,\xi_{1-i/n}$.
Denoting by $F$ the cdf of $|g|$, we have
$$s\leq 1-\frac{i}{n}-F'(\xi_{1-i/n})(\xi_{1-i/n}-\xi_s)=1-\frac{i}{n}-\sqrt{\frac{2}{\pi}}(1-u)\exp({-\xi^2_{1-i/n}}/2)\,\xi_{1-i/n},$$
whence, applying \eqref{eq:Feller quantile} and \eqref{eq:quantile},
$$1-s-\frac{i}{n}\gtrsim (1-u)\exp({-\xi^2_{1-i/n}}/2)\,\xi_{1-i/n}\gtrsim (1-u)\frac{i}{n}\log\frac{n}{i},$$
and
$$s\leq 1-\frac{i}{n}-\widetilde c\,(1-u)\frac{i}{n}\log\frac{n}{i}$$
for a sufficiently small universal constant $\widetilde c>0$.
Finally, in view of \eqref{eq:chernoff leq},
$$\Prob\big\{g_i^*\leq \xi_{s}\big\}
\leq \exp\Big(-\frac{((1-s) n-i+1)^2}{2(1-s)n}\Big)
\leq \exp\Big(-c\,(1-u)^2\,i\,\log\frac{n}{i}\Big).$$
\end{proof}

The two lemmas above need to be complemented with the following crude bound
for probability of very large deviations.
\begin{lemma}\label{l:lower dev large}
Let $u\geq 0$ and $i\leq n/2$. Then
$$\Prob\big\{g_i^*\leq u\big\}\leq (4u)^{n/2}.$$
\end{lemma}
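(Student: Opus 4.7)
The plan is to use the binomial expansion of the cdf of $g_i^*$ combined with an elementary density bound for $|g|$. The statement is a blunt ``all-at-once'' estimate: once $u$ is small, forcing a positive fraction of $|g_j|$'s to lie below $u$ is exponentially costly, and we do not need any fine control on the quantile, so the approach should be quite short.

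First, I would write out the cdf of $g_i^*$ exactly as in the passage preceding Chernoff's inequality. The event $\{g_i^* \leq u\}$ is the event that at most $i-1$ of the $|g_j|$ exceed $u$, so setting $p := \Prob\{|g|\leq u\}$ we have
\[
\Prob\{g_i^* \leq u\} \;=\; \sum_{j=0}^{i-1}\binom{n}{j}(1-p)^j\,p^{\,n-j}.
\]

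Next I would bound $p$ by the trivial density estimate: the density of $|g|$ is $\sqrt{2/\pi}\,e^{-t^2/2}\leq \sqrt{2/\pi}$, hence $p\leq \sqrt{2/\pi}\,u\leq u$. The key structural observation is that the hypothesis $i\leq n/2$ gives $n-j\geq n/2$ for every $j$ in the summation range, so $p^{n-j}\leq p^{n/2}$ (the bound is non-trivial only when $p\leq 1$, in which case $u\leq 1$, and otherwise $(4u)^{n/2}\geq 1$ already). Using $(1-p)^j\leq 1$ and $\sum_{j=0}^{i-1}\binom{n}{j}\leq 2^n$ then yields
\[
\Prob\{g_i^* \leq u\} \;\leq\; 2^n\,p^{n/2} \;=\; (4p)^{n/2} \;\leq\; (4u)^{n/2},
\]
which is the desired conclusion.

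There is no real obstacle here; the only mild point of care is to notice that the bound is vacuous whenever $u\geq 1/4$ (so the density-based estimate $p\leq u$ is only meaningful in the range $u<1/4$, but the computation works uniformly). The lemma is intentionally crude: it is designed to pick up extremely small $u$ where Lemmas~\ref{l:lower dev initial} and \ref{l:lower dev intermediate} become useless because they assume $u$ stays close to a multiple of $\xi_{1-i/n}$.
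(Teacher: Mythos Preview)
Your proof is correct and essentially identical to the paper's: the paper bounds $\Prob\{g_i^*\le u\}\le \binom{n}{i}\Prob\{|g|\le u\}^{n-i}\le 2^n u^{n-i}\le (4u)^{n/2}$, using the same ingredients (density bound $\Prob\{|g|\le u\}\le u$, crude $2^n$ bound on binomial coefficients, and $n-i\ge n/2$). The only cosmetic difference is that you sum the full binomial expansion term by term while the paper collapses it to a single union-bound term.
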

\begin{proof}
We have
$$\Prob\big\{g_i^*\leq u\big\}\leq {n\choose i}\Prob\{|g|\leq u\}^{n-i}\leq 2^n\, u^{n-i} \leq (4u)^{n/2}.$$
\end{proof}

\section{Negative truncated moments of $\ell_p^n$--norms}
\label{s:negative}

In this section, we derive upper bounds for expressions of the form
$$\E\,\Big(\sum\nolimits_{i=1}^n \min(|g_i|,T)^q\Big)^{-L},$$
where the numbers $q\geq 1$ and $L>0$ are such that $qL=O(\log n)$,
and $T$ is a truncation level which can take any value in the range $[\xi_{1-1/n},\infty]$.
In particular, for $T=\infty$ the above quantity is the $-Lq$-th moment of the $\|\cdot\|_q$--norm ---
$\E \|G\|_q^{-Lq}$. Negative moments of arbitrary norms were considered
in \cite{KV}, where, in particular, bounds for quantities of the form $(\E\|G\|^{-q})^{1/q}$
were derived for $q$ less than $d(\|\cdot\|)$, the ``lower Dvoretzky dimension'' of a norm $\|\cdot\|$.
In \cite{PVZ}, negative $r$-th moments of $\|\cdot\|_q$--norms were considered in the same context as our note;
however, the relations derived in \cite{PVZ} (see, in particular \cite[Lemma~3.6]{PVZ})
do not extend to the case when both $q$ and $r$ are greater than $\log n$.
Finally, let us mention a recent work \cite{PV 16b} where a strong upper bound on
$(\E\|G\|^{-q})^{1/q}$ was obtained in terms of the positive moment $\E\|G\|$
and the variance $\Var\|G\|$ for any norm in $\R^n$.
On the other hand, applying this result of \cite{PV 16b} would require extra care because of
absence of a truncation level in the statement of \cite{PV 16b}, and the necessity to have
precise lower bounds for $\E\|G\|_q$.
The approach we take here is relatively elementary and based on the Chernoff inequality which we used in Section~\ref{ss:chernoff}.

\medskip

We start with the following small ball probability estimate:
\begin{lemma}\label{l:negative small ball}
Let $n$ be a large integer, $G=(g_1,g_2,\dots,g_n)$ be the standard Gaussian vector,
and let $T\in [\xi_{1-1/n},\infty]$ and $q\geq 1$. Then
for any number $\tau\in(0,1/2)$ we have
\begin{align*}
\Prob&\Big\{\sum\nolimits_{i=1}^n \min(|g_i|,T)^q\leq \tau\sum\nolimits_{i=1}^n \xi_{1-i/n}^q\Big\}\\
&\leq \min\Big(C'\exp\big(-c\,n^{(1-(2\tau)^{2/q})/4}\big),
n\big(4(2\tau)^{1/q}\sqrt{2\log n}\big)^{n/2}\Big),
\end{align*}
where $C',c>0$ are universal constants.
\end{lemma}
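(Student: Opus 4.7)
The plan is to reduce the small-ball event to a lower-deviation event for a single Gaussian order statistic, and then to union-bound over indices. Write $u:=(2\tau)^{1/q}<1$ and $\Sigma:=\sum_{i=1}^n\xi_{1-i/n}^q$. Since $\xi_{1-i/n}$ is decreasing in $i$, a termwise pairing (matching $i\le n/2$ with $n/2+i$) gives $\sum_{i\le n/2}\xi_{1-i/n}^q\ge\sum_{i>n/2}\xi_{1-i/n}^q$, so the ``head'' carries at least half of $\Sigma$. If on the event $\mathcal E$ we had $\min(g_i^*,T)>u\,\xi_{1-i/n}$ for every $i\le n/2$, then
\[
\sum_{i=1}^n\min(g_i^*,T)^q\ \ge\ u^q\sum_{i\le n/2}\xi_{1-i/n}^q\ \ge\ 2\tau\cdot\Sigma/2\ =\ \tau\Sigma,
\]
contradicting the hypothesis. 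Hence some $i_0\le n/2$ satisfies $\min(g_{i_0}^*,T)\le u\,\xi_{1-i_0/n}$, and since $u\,\xi_{1-i_0/n}\le \xi_{1-1/n}\le T$, this in fact forces $g_{i_0}^*\le u\,\xi_{1-i_0/n}$.

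A union bound then gives $\Prob(\mathcal E)\le\sum_{i_0=1}^{n/2}\Prob\{g_{i_0}^*\le u\,\xi_{1-i_0/n}\}$, which I bound in two ways. For the second bound of the lemma, using $u\,\xi_{1-i_0/n}\le u\sqrt{2\log n}$ (by \eqref{eq:quantile}) together with Lemma~\ref{l:lower dev large} gives $(4u\sqrt{2\log n})^{n/2}$ per term; the at most $n/2$ terms then produce the factor $n$ in front. For the first bound I split by $i_0$: for $1\le i_0\le\sqrt n$, within the window of Lemma~\ref{l:lower dev initial}, its exponent $c\,i_0 u^{-1}(n/(i_0\sqrt{\log n}))^{1-u^2}$ is monotone increasing in $i_0$ and already at $i_0=1$ is of order $n^{1-u^2}/\sqrt{\log n}$, which for $n$ large far exceeds $n^{(1-u^2)/4}$. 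For $\sqrt n\le i_0\le n/2$, I apply Lemma~\ref{l:lower dev intermediate}, whose exponent $c(1-u)^2 i_0\log(n/i_0)$ is at least $c(1-u)^2\sqrt n\log n$; using $(1-u^2)/4\le (1-u)/2$, a short comparison of exponents shows $(1-u)^2\sqrt n\log n\ge n^{(1-u^2)/4}$ in the entire range where the latter is non-trivial. The factor of $n$ coming from the union bound is absorbed into the constant $C'$.

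The main obstacle is handling the values of $u$ outside the window of Lemma~\ref{l:lower dev initial}. When $u$ lies within $C/\log n$ of $1$, one has $(1-u^2)=O(1/\log n)$ and so $n^{(1-u^2)/4}=O(1)$, making the first bound trivial. When $u<1/\sqrt{\log n}$ (i.e.\ $\tau$ very small), I monotonically relax the event to $\{g_{i_0}^*\le \xi_{1-i_0/n}/\sqrt{\log n}\}$ and apply Lemma~\ref{l:lower dev initial} at the threshold $u=1/\sqrt{\log n}$; the resulting exponent is of order $n$, vastly stronger than the required $n^{(1-u^2)/4}$. Carefully tracking constants through this three-case split, and in particular verifying the monotonicity of the Lemma~\ref{l:lower dev initial} exponent in $i_0$ together with the smooth matching of the two exponents near the crossover $i_0\simeq\sqrt n$, is the most delicate part of the bookkeeping.
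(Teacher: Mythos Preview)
Your approach is essentially the paper's: reduce to order statistics for $i\le n/2$, union-bound, and split the range of $i_0$ at $\sqrt n$, invoking Lemmas~\ref{l:lower dev initial}, \ref{l:lower dev intermediate}, \ref{l:lower dev large} exactly as the paper does. The reduction and the second bound (via Lemma~\ref{l:lower dev large}) match the paper line for line.

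There is one point where your bookkeeping is looser than the paper's and deserves care. For $1\le i_0\le\sqrt n$ the paper does \emph{not} throw away the factor $i_0$ in the exponent: it first reduces the Lemma~\ref{l:lower dev initial} bound to $\exp\bigl(-c\,i_0\,n^{(1-u^2)/4}\bigr)$ (using $n/(i_0\sqrt{\log n})\ge n^{1/2}/\sqrt{\log n}$), and then sums the resulting \emph{geometric} series, which gives $C\exp\bigl(-c\,n^{(1-u^2)/4}\bigr)$ directly with no polynomial prefactor. Your route---bounding every term by the $i_0=1$ term via monotonicity and multiplying by $\sqrt n$---does not let you ``absorb the factor of $n$ into $C'$'' in the literal sense: when $1-u^2$ is of order $K/\log n$ for a moderate constant $K$, the exponent at $i_0=1$, namely $(n/\sqrt{\log n})^{1-u^2}\simeq e^{K}$, is bounded, so $\sqrt n\cdot e^{-c\,e^{K}}$ is not $\le C'e^{-c'e^{K/4}}$ for any universal $C'$. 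What saves you is precisely your own observation that in this regime $n^{(1-u^2)/4}=O(1)$, making the target bound trivially $\ge 1$; this triviality should be invoked for \emph{all} $u$ with $1-u^2\lesssim(\log\log n)/\log n$, not only for $u>1-C/\log n$. Once $n^{(1-u^2)/4}$ is large (say $\ge C_0\log n$), the Lemma~\ref{l:lower dev initial} exponent at $i_0=1$ is at least of order $\bigl(n^{(1-u^2)/4}\bigr)^2$, which swallows the $\sqrt n$ easily. Also note a slip: the exponent at $i_0=1$ is $(n/\sqrt{\log n})^{1-u^2}$, not $n^{1-u^2}/\sqrt{\log n}$; these differ exactly in the delicate regime above.
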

\begin{proof}
Obviously,
$$\sum\nolimits_{i=1}^n \min(|g_i|,T)^q\geq \sum\nolimits_{i=1}^{\lfloor n/2\rfloor} \min(g_i^*,T)^q,$$
so that for any $\tau\in(0,1/2)$ we have
\begin{align*}
\Prob&\Big\{\sum\nolimits_{i=1}^n \min(|g_i|,T)^q\leq \tau\sum\nolimits_{i=1}^n \xi_{1-i/n}^q\Big\}\\
&\leq \Prob\Big\{\sum\nolimits_{i=1}^{\lfloor n/2\rfloor} \min(g_i^*,T)^q
\leq 2\tau\sum\nolimits_{i=1}^{\lfloor n/2\rfloor} \xi_{1-i/n}^q\Big\}\\
&\leq\sum_{i=1}^{\lfloor n/2\rfloor}\Prob\big\{\min(g_i^*,T)^q\leq 2\tau\xi_{1-i/n}^q\big\}\\
&=\sum_{i=1}^{\lfloor n/2\rfloor}\Prob\big\{g_i^*\leq (2\tau)^{1/q}\xi_{1-i/n}\big\}.
\end{align*}
First, assume that $(2\tau)^{1/q}\leq 1-\frac{C}{\log n}$, where the constant $C>0$ comes from
Lemma~\ref{l:lower dev initial}.
We will divide the above sum into two parts corresponding to large and ``intermediate'' order statistics.
For every $i\leq \sqrt{n}$,
using the notation $r:={\max((2\tau)^{1/q},(\log n)^{-1/2})}^2$, we get,
in view of Lemmas~\ref{l:lower dev initial} and~\ref{l:lower dev large},
\begin{align*}
\Prob&\big\{g_i^*\leq (2\tau)^{1/q}\xi_{1-i/n}\big\}\\
&\leq\min\bigg(\exp\Big(-c\,i\,\Big(\frac{n}{i\,\sqrt{\log n}}\Big)^{1-r^2}\Big),
\big(4(2\tau)^{1/q}\xi_{1-i/n}\big)^{n/2}\bigg)\\
&\leq\min\Big(\exp\big(-c\,i\,n^{(1-(2\tau)^{2/q})/4}\big),\big(4(2\tau)^{1/q}\sqrt{2\log n}\big)^{n/2}\Big).
\end{align*}
Further, for all $\sqrt{n}<i\leq n/2$ we have, by Lemmas~\ref{l:lower dev intermediate} and~\ref{l:lower dev large},
\begin{align*}
\Prob&\big\{g_i^*\leq (2\tau)^{1/q}\xi_{1-i/n}\big\}\\
&\leq\min\bigg(\exp\Big(- c\,\big(1-(2\tau)^{1/q}\big)^2\,i\,\log\frac{n}{i}\Big),
\big(4(2\tau)^{1/q}\xi_{1-i/n}\big)^{n/2}\bigg)\\
&\leq\min\Big(\exp\big(-\widetilde c\,i\,(\log n)^{-2}\big),\big(4(2\tau)^{1/q}\sqrt{2\log n}\big)^{n/2}\Big).
\end{align*}
Combining the estimates (note that the first term in the first minimum form a geometric sum), we get
\begin{align*}
&\sum_{i=1}^{\lfloor n/2\rfloor}\Prob\big\{g_i^*\leq (2\tau)^{1/q}\xi_{1-i/n}\big\}\\
&\leq \min\Big(\widetilde C\exp\big(-c\,n^{(1-(2\tau)^{2/q})/4}\big)+n\exp\big(-\widetilde c\,\sqrt{n}\,(\log n)^{-2}\big),
n\big(4(2\tau)^{1/q}\sqrt{2\log n}\big)^{n/2}\Big)\\
&\leq\min\Big(C'\exp\big(-c\,n^{(1-(2\tau)^{2/q})/4}\big),
n\big(4(2\tau)^{1/q}\sqrt{2\log n}\big)^{n/2}\Big).
\end{align*}

Finally, observe that for $(2\tau)^{1/q}\geq 1-\frac{C}{\log n}$, we have that
$n^{(1-(2\tau)^{2/q})/4}$ is bounded from above by an absolute constant, so the last estimate is trivially
satisfied as long as $C'$ is chosen sufficiently large.
\end{proof}

As a consequence, we obtain
\begin{prop}
\label{p:negative}
For any $K>0$ there are $n_K,v_K>0$ depending only on $K$ with the following property.
Let $n\geq n_K$, let $q\geq 1$ and $0<L\leq K$ be such that $qL\leq K\log n$,
and let $g_1,g_2,\dots,g_n$ be i.i.d.\ standard Gaussians. Then for any $T\in[\xi_{1-1/n},\infty]$ we have
$$\E\,\Big(\sum\nolimits_{i=1}^n \min(|g_i|,T)^q\Big)^{-L}
\leq v_K\,\Big(\sum\nolimits_{i=1}^n {\xi^q_{1-i/n}}\Big)^{-L}.$$
\end{prop}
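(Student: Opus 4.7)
The plan is to derive the bound via a layer-cake representation and rescale in order to reduce matters to a scale-free integral. Setting $S := \sum_{i=1}^n \min(|g_i|,T)^q$ and $A := \sum_{i=1}^n \xi_{1-i/n}^q$, I would write
$$
\E S^{-L} = L\int_0^\infty u^{-L-1}\Prob\{S<u\}\,du = L\,A^{-L}\int_0^\infty \tau^{-L-1}\Prob\{S<\tau A\}\,d\tau,
$$
so it suffices to bound the $\tau$-integral by a constant depending only on $K$. The tail $\tau\ge 1/2$ contributes at most $L\int_{1/2}^\infty \tau^{-L-1}d\tau = 2^L\le 2^K$, and all the real work is confined to $\tau\in(0,1/2]$, where Lemma~\ref{l:negative small ball} applies.

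For $\tau\in(0,1/2]$ I would substitute $\alpha := (2\tau)^{1/q}\in(0,1]$, so that the problem becomes controlling
$$
Lq\cdot 2^L\int_0^1 \alpha^{-qL-1}P(\alpha)\,d\alpha,\qquad P(\alpha):=\Prob\bigl\{S\le (\alpha^q/2)A\bigr\}.
$$
Since $Lq\le K\log n$ and $L\le K$, the prefactor $Lq\cdot 2^L$ is $O_K(\log n)$, so the $\alpha$-integral needs to be $O_K(1/\log n)$. To achieve this I would split $[0,1]$ into four sub-intervals and apply Lemma~\ref{l:negative small ball} in each.

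On $\alpha\in[0,1/(8\sqrt{2\log n})]$ I would use the second bound of the lemma, $P(\alpha)\lesssim n(4\sqrt{2\log n})^{n/2}\alpha^{n/2}$; direct integration (using $n/2-qL\ge n/4$ for large $n$) produces a contribution of order $2^{-n/2}\cdot\exp(O_K(\log n\log\log n))$, which vanishes as $n\to\infty$. On $\alpha\in[1/(8\sqrt{2\log n}),1/2]$ the first bound yields $P(\alpha)\le C'\exp(-cn^{3/16})$, which easily absorbs the factor $\alpha^{-qL-1}\le \exp(O_K(\log n\log\log n))$. The delicate range is $\alpha\in[1/2,\,1-C_0/\log n]$, with $C_0$ the constant from Lemma~\ref{l:lower dev initial}: writing $\eta=1-\alpha$ one has $n^{(1-\alpha^2)/4}\ge e^{\eta\log n/4}$ and $\alpha^{-qL-1}\le\exp(3K\eta\log n)$, so the substitution $x=\eta\log n$ turns the integrand into $\exp(-ce^{x/4}+3Kx)\cdot dx/\log n$, whose total mass is manifestly $O_K(1/\log n)$ because the double-exponential decay dominates once $x$ exceeds a $K$-dependent threshold. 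Finally on $\alpha\in[1-C_0/\log n,\,1]$ I would use the trivial $P(\alpha)\le 1$; since the interval length is $C_0/\log n$ and $\alpha^{-qL-1}\le e^{3C_0 K}$ (here the hypotheses $L\le K$ and $qL\le K\log n$ are both used), this contribution is again $O_K(1/\log n)$.

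I expect the main obstacle to be this last range $\alpha\in[1-C_0/\log n,1]$: here Lemma~\ref{l:negative small ball} provides no decay at all, reflecting the fact that Lemma~\ref{l:lower dev initial} itself requires $u\le 1-C/\log n$. The argument closes only because the length of this bad window is precisely of order $1/\log n$, which matches the scale on which $\alpha^{-qL-1}$ stays bounded under $qL\le K\log n$. Summing the four contributions, multiplying by $Lq\cdot 2^L = O_K(\log n)$, and adding the $\tau\ge 1/2$ piece yields $\E S^{-L}\le v_K A^{-L}$ with $v_K$ depending only on $K$, valid once $n\ge n_K$ for some $n_K$ depending only on $K$.
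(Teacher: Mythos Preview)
Your proposal is correct and takes essentially the same approach as the paper: both arguments express $\E S^{-L}$ via a tail-sum/layer-cake decomposition and feed Lemma~\ref{l:negative small ball} into ranges of the rescaled variable, absorbing the ``bad window'' near the top into a $K$-dependent constant. The only cosmetic difference is that the paper works with a dyadic sum over levels $2^m$ (splitting $m$ into the ranges $2L\le m\le Lq$, $Lq<m\le 10Lq\log n$, $m>10Lq\log n$), while you run a continuous integral after the substitution $\alpha=(2\tau)^{1/q}$ and split $[0,1]$ into four sub-intervals; the estimates match up one-to-one.
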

\begin{proof}
Fix admissible parameters $K,L,q,T$. We will assume that $n$ is large.
For any integer $m\geq 1$, we have
\begin{align*}
\Prob&\Big\{\Big(\sum\nolimits_{i=1}^n \min(|g_i|,T)^q\Big)^{-L}
\geq 2^{m}\,\Big(\sum\nolimits_{i=1}^n {\xi^q_{1-i/n}}\Big)^{-L}\Big\}\\
&=
\Prob\Big\{\sum\nolimits_{i=1}^n \min(|g_i|,T)^q
\leq 2^{-m/L}\,\sum\nolimits_{i=1}^n {\xi^q_{1-i/n}}\Big\}.
\end{align*}
Applying Lemma~\ref{l:negative small ball}, we obtain for all $m\geq 2L$:
\begin{align}
\Prob&\Big\{\Big(\sum\nolimits_{i=1}^n \min(|g_i|,T)^q\Big)^{-L}
\geq 2^{m}\,\Big(\sum\nolimits_{i=1}^n {\xi^q_{1-i/n}}\Big)^{-L}\Big\}\label{eq:aux976wf}\\
&\leq
\min\Big(C'\exp\big(-c\,n^{(1-2^{-m/(Lq)})/4}\big),
n\big(4\cdot 2^{-m/(2Lq)}\sqrt{2\log n}\big)^{n/2}\Big).\nonumber
\end{align}
In the range $2L\leq m\leq Lq$, we have
$$\exp\big(-c\,n^{(1-2^{-m/(Lq)})/4}\big)
\leq \exp\big(-c\,n^{c''m/(Lq)}\big)\leq\exp\big(-c\,e^{c''m/K}\big)$$
for a sufficiently small universal constant $c''>0$.
In particular, for all such $m$ the probability in \eqref{eq:aux976wf} is bounded from above by
$w_K 4^{-m}$, where $w_K>0$ may only depend on $K$.
Further, for $Lq<m\leq 10Lq\log n$, we have
$$\exp\big(-c\,n^{(1-2^{-m/(Lq)})/4}\big)\leq \exp\big(-c\,n^{1/8}\big)\ll 4^{-10Lq\log n}\leq 4^{-m}.$$
Finally, for $m>10Lq\log n$ the probability in \eqref{eq:aux976wf} is bounded by
$$n\big(4\cdot 2^{-m/(2Lq)}\sqrt{2\log n}\big)^{n/2}\leq n 2^{-mn/(8Lq)}\ll 4^{-m}.$$
Combining the estimates, we get for $h:=(\sum\nolimits_{i=1}^n \min(|g_i|,T)^q)^{-L}$ and 
$\zeta:=(\sum\nolimits_{i=1}^n {\xi^q_{1-i/n}})^{-L}$,
$$
\Prob\big\{h\geq 2^{m}\zeta\big\}\leq \widetilde w_K\,4^{-m},\quad m\geq 2L.
$$
Hence, 
$$
\E\,h=\E\big(h\chi_{\{h\in[0,2^{2L}\zeta]\}}\big)+\sum\nolimits_{m=2L}^\infty\E\big(h\chi_{\{h\in[2^m\zeta,2^{m+1}\zeta]\}}\big)\le
2^{2L}\zeta+\sum\nolimits_{m=2L}^\infty \widetilde w_K2^{-m+1}\zeta,
$$
and the result follows.
\end{proof}

\begin{rem}
\label{r:negative}
Note that for any $1\leq q\leq K\log n$, we have
$$
\sum\nolimits_{i=1}^n {\xi^q_{1-i/n}}\simeq_K n\E\min(|g|,\xi_{1-1/n})^q,
$$
where the symbol ``$\simeq_K$'' means that the quantities are equivalent up to a multiple
depending only on parameter $K>0$.
To see this, observe that for any $i\in\{1,2,\dots,n-1\}$ we have
$\Prob\{\min(|g|,\xi_{1-1/n})\in(\xi_{1-i/n},\xi_{1-(i+1)/n}]\}=\frac{1}{n}$, whence
$$\sum_{i=1}^{n-1}{\xi^q_{1-i/n}}\leq n\E\min(|g|,\xi_{1-1/n})^q\leq {\xi^q_{1-1/n}}+\sum_{i=1}^{n-1}{\xi^q_{1-i/n}}.$$
In remains to apply \eqref{eq:quantile} to compare ${\xi^q_{1-1/n}}$ with the power of the second quantile ${\xi^q_{1-2/n}}$.
\end{rem}

\section{Upper bounds for the variance}\label{s:upper}

In this section we obtain upper bounds for $\Var\|G\|_p$, $p\geq C$.
Before we proceed with the proofs, let us provide some motivation for the strategy
we have taken.
As we mentioned in the introduction, the basic tool for estimating the variance from above is
Talagrand's $L_1-L_2$ bound (Theorem~\ref{t:talagrand}).
In \cite{PVZ}, the theorem was directly applied to the norm $\|\cdot\|_p$, which gives the estimate
$$\Var\|G\|_p\lesssim\sum\limits_{i=1}^n \frac{\E|\partial_i \|G\|_p|^2}
{1+\log\big(\sqrt{\E|\partial_i \|G\|_p|^2}/\E|\partial_i \|G\|_p|\big)},$$
where $\partial_i\|G\|_p$ denotes the $i$-th partial derivative of the norm (viewed as a function in $\R^n$)
evaluated at $G$. An elementary computation then leads to an equivalent inequality
\begin{equation}\label{eq: aux 0q873}
\Var\|G\|_p\lesssim \frac{n}{B}\E\frac{|g_1|^{2p-2}}{\big(\sum_{i=1}^n |g_i|^p\big)^{2-2/p}},
\end{equation}
where $B=1+\log\big(\sqrt{\E|\partial_i \|G\|_p|^2}/\E|\partial_i \|G\|_p|\big)$, and so $B$
can be at most logarithmic in $n$. A natural approach to estimating the expectation in the last
formula would be to remove $g_1$ from the denominator and use independence:
\begin{equation}\label{eq: aux3-ih}
\Var\|G\|_p\lesssim \frac{n}{B}\E|g_1|^{2p-2}\,\E\Big(\sum_{i=2}^n |g_i|^p\Big)^{2/p-2}.
\end{equation}
However, this approach fails for all $p>\log_2 n$: the upper bound for the variance we get
this way is {\it worse} than the bound $\Var\|G\|_p\lesssim 1$ that follows from $1$--Lipschitzness of $\|\cdot\|_p$--norm.
To see this, observe that
$$
\E\Big(\sum_{i=2}^n |g_i|^p\Big)^{2/p-2}\ge \big(\E\sum_{i=2}^n |g_i|^p\big)^{2/p-2},
$$
whence, applying standard estimates for absolute moments of Gaussian variables, we get
that the expression on the right hand side of \eqref{eq: aux3-ih} is at least of order $\frac{2^p}{B}n^{2/p-1}$.

In fact, as we show later, the estimate \eqref{eq: aux3-ih}  is not sharp for all $p>\frac{2\log n}{\log (2e)}$.
Clearly, the problem with the above argument lies in the fact that, for large $p$, the input of the individual
coordinate $|g_1|^p$ to the total sum can be huge, and removing the term from the denominator in \eqref{eq: aux 0q873}
alters the expectation.

As a way to resolve the issue, we will consider truncated Gaussian variables.
Given $p\in [1,\infty)$ and a truncation level $T>0$, we introduce an auxiliary function
\begin{equation}
\label{FM}
f_T(G):=\Big(\sum_{i=1}^n\min(T,\,|g_i|^p)\Big)^{1/p},
\end{equation}
so that
\begin{equation}\label{eq:var<}
\Var\|G\|_p\le 2\E(\|G\|_p-f_T(G))^2+2\Var f_T(G),
\end{equation}
and then treat the two terms on the r.h.s.\ separately (the parameter $p$ shall always be clear from the context).
Determining the right truncation level $T$ (when both terms admit satisfactory upper estimates)
is not straightforward. We prefer to postpone the actual definition of the truncation level,
and consider first some general estimates when $p$ is arbitrary and $T\geq \xi_{1-1/n}$.

We start with $\E(\|G\|_p-f_T(G))^2$.
\begin{lemma}
\label{l:tailsL}
For any large integer $n$, any $1\leq p<\infty$ and any truncation level $T\geq \xi_{1-1/n}$ we have
\begin{equation}
\label{tails}
\E(\|G\|_p-f_T(G))^2\lesssim nT^{-3}\exp(-T^2/2).
\end{equation}
\end{lemma}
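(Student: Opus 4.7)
The plan is to control $\|G\|_p-f_T(G)$ pointwise by the simpler quantity $\sum_i(|g_i|-T)_+$, and to invoke the quantile hypothesis $T\ge\xi_{1-1/n}$ only at the very end to absorb a cross term. Let $G_T\in\R^n$ have coordinates $(G_T)_i:=\mathrm{sign}(g_i)\min(T,|g_i|)$, so that $\|G_T\|_p=f_T(G)$. The triangle inequality gives $\|G\|_p-f_T(G)\le\|G-G_T\|_p$, and since $\|y\|_p\le\|y\|_1$ for $p\ge 1$ by \eqref{eq:pq}, I obtain the pointwise bound
\begin{equation*}
\|G\|_p-f_T(G)\le\sum_{i=1}^n Y_i,\qquad Y_i:=(|g_i|-T)_+.
\end{equation*}

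Because the $Y_i$'s are i.i.d., squaring and taking expectations yields $\E\big(\sum_{i=1}^n Y_i\big)^2=n\,\E Y_1^2+n(n-1)(\E Y_1)^2$. I would then estimate the one-dimensional tail moments via the substitution $s=t-T$ in the defining integral:
\begin{equation*}
\E Y_1^k=\sqrt{\tfrac{2}{\pi}}\,e^{-T^2/2}\int_0^\infty s^k e^{-sT-s^2/2}\,ds\lesssim T^{-k-1}\,e^{-T^2/2},\qquad k=1,2,
\end{equation*}
using $e^{-s^2/2}\le 1$ and $\int_0^\infty s^k e^{-sT}\,ds=k!/T^{k+1}$. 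Plugging this in, the first term already matches the target: $n\,\E Y_1^2\lesssim nT^{-3}e^{-T^2/2}$.

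The hypothesis $T\ge\xi_{1-1/n}$ is needed only to absorb the cross term $n^2(\E Y_1)^2\lesssim(ne^{-T^2/2})^2 T^{-4}$. By the Gaussian tail estimate \eqref{eq:normal approx}, one has $\Prob\{|g|>T\}\simeq T^{-1}e^{-T^2/2}$, and the definition of $\xi_{1-1/n}$ forces $\Prob\{|g|>T\}\le 1/n$, so $ne^{-T^2/2}\lesssim T$. Therefore
\begin{equation*}
n^2(\E Y_1)^2\lesssim (ne^{-T^2/2})^2 T^{-4}\lesssim T\cdot ne^{-T^2/2}\cdot T^{-4}=nT^{-3}e^{-T^2/2},
\end{equation*}
which is dominated by the target and completes the estimate.

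The only step worth watching carefully is the crude inequality $\|y\|_p\le\|y\|_1$: all the slack it introduces is collected into the cross term $n^2(\E Y_1)^2$, which is controlled by $n\,\E Y_1^2$ precisely under $T\ge\xi_{1-1/n}$. In this sense the quantile threshold is the exact point at which the two terms in the second-moment expansion of $\sum_i Y_i$ become comparable; uniformity in $p$ is free, and everything else reduces to the sharp one-dimensional Gaussian tail asymptotics for $(|g|-T)_+$.
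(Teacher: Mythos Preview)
Your proof is correct and is in fact cleaner than the paper's. Both arguments ultimately rest on the same pointwise inequality
\[
\|G\|_p-f_T(G)\le\sum_{i=1}^n(|g_i|-T)_+,
\]
but you obtain it in one line via the triangle inequality and $\|\cdot\|_p\le\|\cdot\|_1$, whereas the paper reaches it through a concavity argument for $r\mapsto r^{1/p}$ followed by a telescoping decomposition. After that, you compute the second moment of an i.i.d.\ sum directly as $n\,\E Y_1^2+n(n-1)(\E Y_1)^2$ and handle the two terms separately; the paper instead conditions on the size $m$ of the exceedance set $\{i:|g_i|>T\}$, applies Cauchy--Schwarz, and is left to evaluate the generating-function-type sum $\sum_m\binom{n}{m}m^2 a^m$ with $a\simeq T^{-1}e^{-T^2/2}$. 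Your route makes the role of the hypothesis $T\ge\xi_{1-1/n}$ more transparent: it is exactly the threshold at which the off-diagonal term $n^2(\E Y_1)^2$ is dominated by the diagonal term $n\,\E Y_1^2$, via $ne^{-T^2/2}\lesssim T$ (which follows from the lower Gaussian tail bound in \eqref{eq:normal approx}, since $T\gtrsim\sqrt{\log n}$ for large $n$). The paper's approach hides this same balance inside the estimate $(1+a)^{n-1}\lesssim 1$.
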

\begin{proof}
Define a random set $I=I(G):=\{i\leq n:\, |g_i|>T\}$.
Since for any concave function $h$ in $\R$ and any $t\geq 0$ and $x\geq y$, we have
\begin{equation}
\label{eq:conc}
h(x+t)-h(y+t)\leq h(x)-h(y),
\end{equation}
then, taking $h(r):=r^{1/p}$ and $t:=\sum_{i\notin I}|g_i|^p$, we get
\begin{align*}
\|G\|_p-f_T(G)=\Big(\sum_{i\in I}|g_i|^p+t\Big)^{1/p}-\Big(|I|T^p+t\Big)^{1/p}
\leq \Big(\sum_{i\in I}|g_i|^p\Big)^{1/p}-|I|^{1/p}T.
\end{align*}
For every $m\geq 1$, let $\chi_{\{|I(G)|=m\}}$ be the indicator of the event that exactly $m$
coordinates of $G$ are greater (in absolute value) than $T$.
It follows from the above that for every $m\ge 1$ we have
\begin{align*}
\E\big((\|G\|_p-f_T(G))^2\chi_{\{|I(G)|=m\}}\big)
&\le\E\Big[\Big(\Big(\sum_{i\in I}|g_i|^p\Big)^{1/p}-m^{1/p}T\Big)^2\chi_{\{|I(G)|=m\}}\Big]\\
&\leq{n \choose m}\E\Big[\Big(\Big(\sum_{i=1}^m|g_i|^p\Big)^{1/p}-m^{1/p}T\Big)^2\chi_{\{|g_1|>T,\cdots,|g_m|>T\}}\Big],
\end{align*}
where, in view of \eqref{eq:conc},
\begin{align*}
\Big(&\Big(\sum_{i=1}^m|g_i|^p\Big)^{1/p}-m^{1/p}T\Big)^2\\
&=\Big(\sum_{j=0}^{m-1}\Big[\Big(\sum_{i=1}^{m-j}|g_i|^p+jT^p \Big)^{1/p}
-\Big(\sum_{i=1}^{m-j-1}|g_i|^p+(j+1)T^p \Big)^{1/p}\Big]\Big)^2\\
&\leq\Big(\sum_{j=0}^{m-1}\big[|g_{m-j}|-T\big]\Big)^2\\
&\leq m\sum_{j=0}^{m-1}\big[|g_{m-j}|-T\big]^2.
\end{align*}
Hence,
\begin{align*}
\E\big((\|G\|_p-f_T(G))^2\chi_{\{|I(G)|=m\}}\big)
&\leq {n \choose m}m\sum_{j=0}^{m-1}\E\big(\big[|g_{m-j}|-T\big]^2\chi_{\{|g_1|>T,\cdots,|g_m|>T\}}\big)\\
&={n \choose m}m^2\E\big((|g|-T)^2\chi_{\{|g|>T\}}\big)(\Prob\{|g|>T\})^{m-1}.
\end{align*}
In view of \eqref{eq:normal approx}, we have $\Prob\{|g|>T\}\leq \sqrt{2/\pi}\,T^{-1}\exp(-T^2/2)$, and
\begin{align*}
\sqrt{\pi/2}\,\E\big((|g|-T)^2\chi_{\{|g|>T\}}\big)
&=\int_T^\infty(x-T)^2 e^{-x^2/2}\,dx\\
&=e^{-T^2/2}\int _0^\infty y^2e^{-y^2/2}e^{-Ty}\,dy\\
&\leq e^{-T^2/2}\int _0^\infty y^2e^{-Ty}dy\\
&\lesssim T^{-3}\exp(-T^2/2).
\end{align*}
Summarizing, we get
\begin{align*}
\E(\|G\|_p-f_T(G))^2&=\sum_{m=1}^n\E\big((\|G\|_p-f_T(G))^2\chi_{\{|I(G)|=m\}}\big)\\
&\lesssim  T^{-2}\sum_{m=1}^n {n \choose m}m^2 \Big(\sqrt{\frac{2}{\pi}}\frac{1}{T}\exp(-T^2/2)\Big)^m.
\end{align*}
It is easy to show that for any number $a\in\R$ we have
$$
\sum_{m=1}^n {n \choose m}m^2a^m=an(1+a)^{n-1}+a^2n(n-1)(1+a)^{n-2}.
$$
Since in our case $a=\sqrt{{2}/{\pi}}{T}^{-1}\exp(-T^2/2)$, relation~\eqref{eq:Feller quantile}
implies that $(1+a)^{n-1}\lesssim 1$, and
$$\sum_{m=1}^n {n \choose m}m^2 \Big(\sqrt{\frac{2}{\pi}}\frac{1}{T}\exp(-T^2/2)\Big)^m
\lesssim an\leq n{T}^{-1}\exp(-T^2/2).$$
The result follows.
\end{proof}

As the next step, we consider the variance of $f_T(G)$.
\begin{lemma}
\label{l:varf1}
Let $n$ be a large integer, let $p\in[1,3\log n]$ and let $T\geq \xi_{1-1/n}$.
Then
\begin{equation*}
\Var f_T(G)\lesssim \frac{n^{-1+2/p}}{1+\log A}\cdot\frac{\E(|g|^{2p-2}\chi_{\{|g|\leq T\}})}{(\E\min(\xi_{1-1/n},|g|)^{p})^{2-2/p}},
\end{equation*}
where
\begin{equation}\label{eq: A definition}
A:= \max\bigg(1,\frac{\E(|g|^{2p-2}\chi_{\{|g|\le T\}})}{\big(\E(|g|^{p-1}\chi_{\{|g|\le T\}})\big)^2}
\cdot\frac{\big(n\E\min(\xi_{1-1/n},|g|)^{p}\big)^{2-2/p}}{T^{2p-2}+\big(n\E\min(T,\,|g|)^{p}\big)^{2-2/p}}\bigg).
\end{equation}
\end{lemma}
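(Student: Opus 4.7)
The plan is to apply Talagrand's $L_1$--$L_2$ inequality (Theorem~\ref{t:talagrand}) directly to the truncated functional $f_T$, understood as $f_T(G)=\bigl(\sum_i\min(T,|g_i|)^p\bigr)^{1/p}$. A direct differentiation gives
$$
\partial_i f_T(G)=\frac{|g_i|^{p-1}\,\operatorname{sgn}(g_i)\,\chi_{\{|g_i|\le T\}}}{f_T(G)^{p-1}},
$$
and by symmetry in the coordinates all $n$ terms in Talagrand's bound contribute equally. The task thus reduces to controlling $\E|\partial_1 f_T(G)|^2$ from above, and additionally producing matching bounds on both $\E|\partial_1 f_T(G)|$ (from above) and $\E|\partial_1 f_T(G)|^2$ (from below) so that $B:=\sqrt{\E|\partial_1 f_T(G)|^2}/\E|\partial_1 f_T(G)|$ satisfies $B^2\gtrsim A$.

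For the leading factor, I will use the key observation that $f_T(G)^p\ge S:=\sum_{j\ge 2}\min(T,|g_j|)^p$, where $S$ is \emph{independent} of $g_1$. This yields
$$
\E|\partial_1 f_T(G)|^2\le \E\bigl(|g|^{2p-2}\chi_{\{|g|\le T\}}\bigr)\cdot\E\, S^{-(2p-2)/p}.
$$
Since $(2p-2)/p\le 2$ and $p\cdot (2p-2)/p=2p-2\le 6\log n$, Proposition~\ref{p:negative} (with an absolute $K$) combined with Remark~\ref{r:negative} yields $\E S^{-(2p-2)/p}\lesssim (n\E\min(\xi_{1-1/n},|g|)^p)^{-(2p-2)/p}$. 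Plugging this into the numerator of Talagrand's estimate and simplifying the powers of $n$ ($n\cdot n^{-(2p-2)/p}=n^{-1+2/p}$) produces exactly the leading factor asserted in the lemma.

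To lower bound $B$, the same independence trick applied to the first moment gives, via Proposition~\ref{p:negative} with $L=(p-1)/p$,
$$
\bigl(\E|\partial_1 f_T(G)|\bigr)^2\lesssim \bigl(\E|g|^{p-1}\chi_{\{|g|\le T\}}\bigr)^2\cdot(n\E\min(\xi_{1-1/n},|g|)^p)^{-(2p-2)/p}.
$$
For the corresponding lower bound on the numerator of $B^2$, I use the pointwise estimate $f_T(G)^p\le T^p+S$, whence $f_T(G)^{2p-2}\lesssim T^{2p-2}+S^{(2p-2)/p}$ (from $(a+b)^\alpha\lesssim a^\alpha+b^\alpha$ for $\alpha<2$, with a universal constant). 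Using independence and Jensen's inequality for the convex function $x\mapsto 1/(T^{2p-2}+x)$, we get
$$
\E|\partial_1 f_T(G)|^2\gtrsim \frac{\E(|g|^{2p-2}\chi_{\{|g|\le T\}})}{T^{2p-2}+\E S^{(2p-2)/p}}.
$$

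The main technical obstacle is then to bound $\E S^{(2p-2)/p}$ from above by $T^{2p-2}+(n\E\min(T,|g|)^p)^{(2p-2)/p}$. For $p\le 2$ the exponent is $\le 1$ and Jensen (concavity) gives this immediately. For $p>2$ I will invoke the power-mean inequality $\E S^{(2p-2)/p}\le (\E S^2)^{(p-1)/p}$, expand $\E S^2=(\E S)^2+\Var S$, and exploit the fact that each summand of $S$ is bounded by $T^p$ to obtain $\Var S\lesssim T^p\,\E S$. Applying subadditivity of $x\mapsto x^{(p-1)/p}$ and splitting the resulting cross term $T^{p-1}(\E S)^{(p-1)/p}$ via AM-GM between $T^{2p-2}$ and $(\E S)^{(2p-2)/p}$ completes this estimate. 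Combining it with the two bounds for $\E|\partial_1 f_T|$ and $\E|\partial_1 f_T|^2$ yields $B^2\gtrsim A$ for $A$ as in \eqref{eq: A definition}, and Talagrand's inequality finishes the proof.
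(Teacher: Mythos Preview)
Your proof is correct and follows the same overall architecture as the paper: apply Talagrand's $L_1$--$L_2$ inequality to $f_T$, separate out $g_1$ by independence, and control the negative moments of $S=\sum_{j\ge 2}\min(T,|g_j|)^p$ via Proposition~\ref{p:negative} and Remark~\ref{r:negative}. The upper bounds on $\E|\partial_1 f_T|^2$ and $\E|\partial_1 f_T|$ are obtained identically in the paper.

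The one genuine difference is in the \emph{lower} bound for $\E|\partial_1 f_T|^2$. The paper bypasses your variance/AM--GM computation with a median argument: from $f_T^p\le T^p+S$ one gets
\[
\E|\partial_1 f_T|^2\;\ge\;\frac{\E\bigl(|g|^{2p-2}\chi_{\{|g|\le T\}}\bigr)}{2\,\M\bigl((T^p+S)^{2-2/p}\bigr)}
\;\gtrsim\;\frac{\E\bigl(|g|^{2p-2}\chi_{\{|g|\le T\}}\bigr)}{(T^p+\E S)^{2-2/p}},
\]
using only that $\M S\le 2\E S$ (Markov) and monotonicity of $x\mapsto x^{2-2/p}$, and then splits $(T^p+\E S)^{2-2/p}\lesssim T^{2p-2}+(n\E\min(T,|g|)^p)^{2-2/p}$. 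This is a one-step replacement of ``median of the denominator by its mean,'' whereas your route (Jensen on $x\mapsto 1/(T^{2p-2}+x)$, then $\E S^{(2p-2)/p}\le(\E S^2)^{(p-1)/p}$, then $\Var S\le T^p\E S$, then AM--GM) achieves the same bound through more steps. Both are valid; the median trick is shorter and worth knowing.
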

\begin{proof}
It follows from Theorem~\ref{t:talagrand} that
\begin{equation}
\label{eq:talagr1}
\Var(f_T(G))\lesssim \frac{n\E(|\partial_1 f_T|^2)}{1+\log \big(\E(|\partial_1 f_T|^2)/(\E|\partial_1 f_T|)^2\big)},
\end{equation}
where $|\partial_1 f_T|=f_T^{1-p}|g_1|^{p-1}\chi_{\{|g_1|\le T\}}$. First we estimate the numerator.
By Proposition~\ref{p:negative} applied with a constant parameter $K\ge 6$
to the standard $(n-1)$--dimensional truncated Gaussian vector, we have
\begin{align*}
\E(|\partial_1f_T|^2)&\leq \E\big(|g_1|^{2p-2}\chi_{\{|g_1|\le T\}}\big)\,
\E\Big(\sum_{i=2}^n\min(T,\,|g_i|)^{p}\Big)^{2/p-2}
\\
&\lesssim \E\big(|g|^{2p-2}\chi_{\{|g|\le T\}}\big)\,
\Big(\sum_{i=2}^n \xi_{1-i/(n-1)}^p\Big)^{2/p-2}.
\end{align*}
Next, observe that
$$\sum_{i=2}^n \xi_{1-i/(n-1)}^p\simeq \sum_{i=1}^n \xi_{1-i/n}^p$$
(this can be easily verified using relation \eqref{eq:quantile}).
Then, in view of Remark~\ref{r:negative},
$$\E(|\partial_1f_T|^2)\lesssim \E\big(|g|^{2p-2}\chi_{\{|g|\le T\}}\big)\,
\big(n\E\min(|g|,\xi_{1-1/n})^{p}\big)^{2/p-2}.$$

It remains to estimate from below the denominator in \eqref{eq:talagr1}.
Essentially repeating the above computations, we get
\begin{align*}
\E(|\partial_1f_T|)\lesssim\E\big(|g|^{p-1}\chi_{\{|g|\leq T\}}\big)\,\big(n\E\min(|g|,\,\xi_{1-1/n})^{p}\big)^{1/p-1}.
\end{align*}
Further,
\begin{align*}
    \E(|\partial_1 f|^2)&=\E\frac{|g_1|^{2p-2}\chi_{\{|g_1|\le T\}}}{\big(\sum_{i=1}^n\min(T,\,|g_i|)^{p}\big)^{2-2/p}}
    \\
    &\ge  \E\frac{|g_1|^{2p-2}\chi_{\{|g_1|\le T\}}}{\big(T^{p}+\sum_{i=2}^n\min(T,\,|g_i|)^{p}\big)^{2-2/p}}
    \\
    &\ge  \frac{\E\big(|g_1|^{2p-2}\chi_{\{|g_1|\le T\}}\big)}{2 \M\big((T^{p}+\sum_{i=2}^n\min(T,\,|g_i|)^{p})^{2-2/p}\big)}
    \\
    &\gtrsim\frac{\E\big(|g|^{2p-2}\chi_{\{|g|\le T\}}\big)}{ \big(T^{p}+\E\sum_{i=2}^n\min(T,\,|g_i|)^{p}\big)^{2-2/p}}
    \\
    &\gtrsim\frac{\E\big(|g|^{2p-2}\chi_{\{|g|\le T\}}\big)}
   {T^{2p-2}+ \big(n\E\min(T,\,|g|)^{p}\big)^{2-2/p}},
\end{align*}
and the statement follows.
\end{proof}

For shortness, in what follows  we denote
$$
\xi:=\xi_{1-1/n}.
$$
Note that $\x1\exp(\x1^2/2)\simeq n$ and by \eqref{eq:quantile}
\begin{equation}
\label{eq:quantile1}
\x1=\sqrt{2\log\, n}-\frac{1}{2}\frac{\log\log\, n}{\sqrt{2\log\, n}}+O\Big( \frac{1}{\sqrt{\log\, n}}\Big).
\end{equation}

Let us state a combination of the last two lemmas as a corollary:
\begin{cor}\label{cor:combined}
Let $n$ be a large integer, let $p\in[1,3\log n]$, and let $T\geq \x1$. Then
\begin{equation}\label{eq: in cor combined}
\Var\|G\|_p\lesssim nT^{-3}\exp(-T^2/2)+\frac{n^{-1+2/p}}{1+\log A}
\cdot\frac{\E(|g|^{2p-2}\chi_{\{|g|\leq T\}})}{(\E\min(\x1,|g|)^{p})^{2-2/p}},
\end{equation}
where $A$ is defined by \eqref{eq: A definition}.
\end{cor}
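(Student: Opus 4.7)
The statement is essentially a direct combination of the two preceding lemmas, so my plan is to just assemble them via the decomposition~\eqref{eq:var<}.

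First I would fix $p\in[1,3\log n]$ and a truncation level $T\geq \xi=\xi_{1-1/n}$, and invoke the elementary inequality
\[
\Var\|G\|_p \;\le\; 2\,\E\bigl(\|G\|_p - f_T(G)\bigr)^2 + 2\,\Var f_T(G),
\]
which is just~\eqref{eq:var<} applied to the auxiliary truncated function $f_T$ defined in~\eqref{FM}. This reduces the task to estimating the two summands separately, which is exactly what was done in Lemmas~\ref{l:tailsL} and~\ref{l:varf1}.

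Next, I would apply Lemma~\ref{l:tailsL} (whose hypotheses $p\in[1,\infty)$ and $T\ge\xi_{1-1/n}$ are met) to the first term, producing the bound
\[
\E\bigl(\|G\|_p - f_T(G)\bigr)^2 \;\lesssim\; nT^{-3}\exp(-T^2/2),
\]
which gives the first summand in~\eqref{eq: in cor combined}. For the second term I would apply Lemma~\ref{l:varf1} (whose hypotheses $p\in[1,3\log n]$ and $T\ge\xi_{1-1/n}$ are exactly those of the corollary), which yields
\[
\Var f_T(G) \;\lesssim\; \frac{n^{-1+2/p}}{1+\log A}\cdot\frac{\E(|g|^{2p-2}\chi_{\{|g|\le T\}})}{(\E\min(\xi,|g|)^{p})^{2-2/p}},
\]
with $A$ given by~\eqref{eq: A definition}. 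Summing the two estimates, absorbing the constant factor $2$ into the $\lesssim$ symbol, and noting that the definition of $A$ matches the one in the corollary, gives~\eqref{eq: in cor combined} exactly.

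There is no real obstacle here: the corollary is a packaging step whose purpose is to present a single bound in terms of the free parameter $T$, which will later be tuned to the value appropriate for each range of $p$. The only thing to verify carefully is that the hypotheses of both lemmas are indeed implied by those of the corollary (in particular, the common lower bound $T\ge\xi_{1-1/n}$ and the restriction $p\le 3\log n$ coming from Lemma~\ref{l:varf1}), which is immediate.
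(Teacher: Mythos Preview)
Your proposal is correct and matches the paper's approach exactly: the corollary is stated there as ``a combination of the last two lemmas,'' meaning precisely the decomposition~\eqref{eq:var<} followed by Lemma~\ref{l:tailsL} for the first term and Lemma~\ref{l:varf1} for the second. There is nothing to add.
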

Essentially, our work consists in optimizing the above expression over admissible $T$.
It turns out that taking the truncation level close to
\begin{equation}\label{eq:ideal truncation}
\big(n\E\min(\x1,|g|)^p\big)^{1/p}
\end{equation}
produces optimal upper bounds for the variance.
Observe that the quantity in \eqref{eq:ideal truncation} is greater than $\x1$. Indeed,
$$\E\min(\x1,|g|)^p=\sqrt{\frac{2}{\pi}}\int\limits_0^\infty \min(\x1^p,t^p)e^{-t^2/2}\,dt\geq
\sqrt{\frac{2}{\pi}}\int\limits_{\x1}^\infty \x1^p e^{-t^2/2}\,dt=\frac{\x1^p}{n}.$$
Thus, \eqref{eq:ideal truncation} may serve as an admissible truncation level in \eqref{eq: in cor combined}.
The following estimates are implied by Corollary~\ref{c:moments} and relation \eqref{eq:Feller quantile}.
\begin{lemma}[Estimates for $\E\min(\x1,|g|)^p$]\label{l:approximation}
Let $n$ be a large integer and let $p\geq 1$.
Then
\begin{itemize}
\item For $1\leq p\leq \x1^2$, we have
$$\E\min(\x1,|g|)^p\simeq (p/e)^{p/2}.$$
\item For $p\geq \x1^2$, we have
$$\frac{\x1^{p+2}}{\x1+p-\x1^2}\lesssim n\E\min(\x1,|g|)^p\lesssim\frac{p\,\x1^{p}}{\x1+p-\x1^2}.$$
\end{itemize}
\end{lemma}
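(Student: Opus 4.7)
The lemma is essentially a direct consequence of Corollary~\ref{c:moments} applied with $a=\xi$ and $q=p$, after translating the factor $e^{-\xi^2/2}$ into a power of $n$ via the quantile estimate \eqref{eq:Feller quantile}. So my plan is to simply split into the two cases of that corollary and then rewrite.

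For the first bullet, I would set $a=\xi$ and $q=p$ in part (i) of Corollary~\ref{c:moments}. The hypothesis of that part is exactly $q\leq a^2$, which matches $p\leq \xi^2$, and the conclusion is $\E\min(|g|,\xi)^p\simeq (p/e)^{p/2}$.

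For the second bullet, I would apply part (ii) of Corollary~\ref{c:moments} with the same choice, giving
\begin{equation*}
\frac{\xi^{p+1}e^{-\xi^2/2}}{\xi+p-\xi^2}
\lesssim \E\min(|g|,\xi)^p
\lesssim \frac{p\,\xi^{p-1}e^{-\xi^2/2}}{\xi+p-\xi^2}.
\end{equation*}
Then I would invoke \eqref{eq:Feller quantile} at $\alpha=1-1/n$ to obtain $\xi^{-1}e^{-\xi^2/2}\simeq 1/n$, i.e.\ $e^{-\xi^2/2}\simeq \xi/n$. Multiplying the displayed inequality by $n$ and substituting, the numerators become $\xi^{p+2}$ and $p\,\xi^p$ respectively (up to universal constants), yielding the desired two-sided bound on $n\E\min(|g|,\xi)^p$.

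There is no obstacle to speak of — the entire content of the lemma is already packaged inside Corollary~\ref{c:moments} and \eqref{eq:Feller quantile}. The only minor point to double-check is that in case (ii) the denominator $a+q-a^2 = \xi+p-\xi^2$ is nonnegative under the hypothesis $p\geq\xi^2$, and in fact it is bounded below by $\xi>0$, so all expressions are well-defined. Everything else is substitution.
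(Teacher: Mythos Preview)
Your proposal is correct and matches the paper's own approach exactly: the paper states that the lemma is ``implied by Corollary~\ref{c:moments} and relation \eqref{eq:Feller quantile},'' which is precisely the substitution $a=\xi$, $q=p$ together with $e^{-\xi^2/2}\simeq \xi/n$ that you carry out.
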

While working with expression \eqref{eq:ideal truncation} directly may be complicated,
the above lemma allows somewhat simpler (equivalent) definition. For $p\geq 1$, we define
a truncation level $M$ as follows
\begin{equation}\label{eq: M definition}
M^p=M(p)^p:=\begin{cases}n\big(p/e\big)^{p/2},&\mbox{if }1\leq p\leq\x1^2;\\
\x1^{p}\cdot\frac{p}{\x1+p-\x1^2},&\mbox{if }\x1^2<p.\end{cases}
\end{equation}

In the next statement we collect some simple properties of $M$.
\begin{lemma}
\label{l:M}
Provided that $n$ is sufficiently large, we have:
\begin{itemize}
\item $M\geq \x1$ for all $p\geq 1$;

\item If $1\leq p\leq \frac{2\log n}{\log (2e)}$ then $2p-2\leq M^2$;

\item If $\frac{2\log n}{\log (2e)}\leq p\leq \x1^2$ then $p\leq M^2\leq 2p$;

\item If $\x1^2<p$ then $M^2\leq p^{1+\frac{1}{p}}$.

\end{itemize}
\end{lemma}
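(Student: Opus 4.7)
The statement is a collection of elementary consequences of the defining formula~\eqref{eq: M definition} for $M$ together with the quantile asymptotic~\eqref{eq:quantile1}, which for all sufficiently large $n$ gives $\xi^2 < 2\log n$ and $\xi > 1$; these are essentially the only external inputs I expect to need. The plan is to treat the four bullets separately, with the first one requiring a brief one-variable calculus step and the remaining three being direct algebraic manipulations of the explicit formulas.

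For the inequality $M \ge \xi$, I would split at $p = \xi^2$. In the regime $p > \xi^2$ the second branch of~\eqref{eq: M definition} gives $M^p/\xi^p = p/(\xi + p - \xi^2)$, so the desired bound is equivalent to $\xi + p - \xi^2 \le p$, i.e.\ $\xi \le \xi^2$, which holds because $\xi > 1$. In the regime $1 \le p \le \xi^2$, after taking $p$th roots and logarithms, the inequality becomes
\[
\log n \ge h(p) := \tfrac{p}{2}\log\!\big(e\,\xi^2/p\big).
\]
A one-line differentiation gives $h'(p) = \tfrac12\log(\xi^2/p) \ge 0$ on $[1,\xi^2]$ with equality only at $p = \xi^2$, so $h$ attains its maximum at the right endpoint, where $h(\xi^2) = \xi^2/2$. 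The claim then reduces to $\xi^2 \le 2\log n$, which is exactly the quantile asymptotic.

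For the second and third bullets, the hypothesis $p \le \xi^2$ places us in the first branch of~\eqref{eq: M definition}, so $M^2 = n^{2/p}(p/e)$ throughout. The second bullet follows from $M^2/(2p-2) \ge M^2/(2p) = n^{2/p}/(2e)$ because the hypothesis $p \le 2\log n/\log(2e)$ is exactly $n^{2/p} \ge 2e$. The third bullet splits similarly: the upper bound $M^2 \le 2p$ is equivalent to $n^{2/p} \le 2e$, i.e.\ $p \ge 2\log n/\log(2e)$, and the lower bound $M^2 \ge p$ is equivalent to $n^{2/p} \ge e$, i.e.\ $p \le 2\log n$, which is implied by $p \le \xi^2 < 2\log n$.

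The fourth bullet uses the second branch of~\eqref{eq: M definition}. Since $p > \xi^2$ implies $\xi + p - \xi^2 \ge \xi$, one gets
\[
M^p \le \xi^p \cdot \frac{p}{\xi} = \xi^{p-1} p, \qquad \text{hence} \qquad M^2 \le \xi^{2-2/p}\, p^{2/p}.
\]
Comparing this with $p^{1+1/p} = p \cdot p^{1/p}$, the claim reduces to $(\xi^2/p)^{1-1/p} \le 1$, which holds because $\xi^2 \le p$ and $1-1/p \ge 0$. I do not anticipate any real obstacle in carrying out this plan; the least mechanical step is identifying the location of the maximum of $h$ in the first bullet, and even that is dispatched by a single derivative.
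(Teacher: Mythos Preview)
Your proposal is correct and essentially follows the paper's approach. The only cosmetic difference is in the first bullet for the range $1\le p\le\xi^2$: you rewrite $M\ge\xi$ as $\log n\ge h(p)=\tfrac{p}{2}\log(e\xi^2/p)$ and maximize $h$, whereas the paper minimizes $M^2=n^{2/p}p/e$ directly over $p$ and finds the minimum $2\log n$ at $p=2\log n$; these are dual formulations of the same one-variable calculus step, and both reduce to $\xi^2\le 2\log n$. Your treatment of the remaining bullets matches the paper's (and in fact supplies the short computations that the paper leaves implicit for the second and fourth items).
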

\begin{proof}
First, taking into account that
\begin{equation}\label{eq:minM2}
\min_{p\in[1,\infty)} n^{2/p}{p/e}=n^{2/p}{p/e}|_{p=2\log n}=2\log n\geq\x1^2,
\end{equation}
we get $M\geq \x1$ for all $p\geq 1$.
In the range $1\leq p\leq \frac{2\log n}{\log (2e)}$, the assertion trivially follows from~\eqref{eq:quantile1}
and the estimate $M\geq\xi$. For $\frac{2\log n}{\log (2e)}\leq p\leq \x1^2$,
we have $2p\geq n^{2/p}{p}/{e}=M^2$, and as $n^{2/p}>e$, we get $M^2>p$.
In the interval $\x1^2<p$ the statement follows from the definition of $M$.
\end{proof}

\begin{lemma}\label{l:computational}
We have $\exp(-n^{2/p}p/(2e))\le n^{-2}2^p$ for all $p\in[1,2\log n]$.
\end{lemma}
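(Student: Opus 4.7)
The plan is to take logarithms and reduce the inequality to a one-variable elementary calculus problem. Taking the logarithm of both sides, the claim is equivalent to showing
$$
\frac{n^{2/p}\,p}{2e}+p\log 2\;\ge\;2\log n\qquad\text{for all }p\in[1,2\log n].
$$
Dividing by $p>0$ and substituting $x:=2\log n/p$ (so that $n^{2/p}=e^{x}$, and the range $p\in[1,2\log n]$ corresponds to $x\in[1,2\log n]$), this becomes
$$
\frac{e^{x-1}}{2}+\log 2\;\ge\;x.
$$

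The plan is then to verify this elementary inequality by calculus. Set $\phi(x):=\frac{e^{x-1}}{2}+\log 2-x$. Then $\phi'(x)=\frac{e^{x-1}}{2}-1$, which vanishes precisely at $x_*=1+\log 2$, and since $\phi''>0$ this is a global minimum on $\R$. A direct computation gives
$$
\phi(x_*)=\frac{e^{\log 2}}{2}+\log 2-(1+\log 2)=\frac{2}{2}+\log 2-1-\log 2=0,
$$
so $\phi(x)\ge 0$ for every real $x$, and in particular on the required range $x\in[1,2\log n]$.

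I do not expect any real obstacle here: the statement is a routine convexity estimate, and the only minor point to be careful about is the direction of substitution (making sure the stated range $p\in[1,2\log n]$ maps into the region where $\phi\ge 0$, which in fact holds on all of $\R$, so the range restriction is inessential for the inequality itself and only reflects the regime in which the bound is actually needed elsewhere in the paper).
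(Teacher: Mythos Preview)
Your proof is correct and essentially the same as the paper's: both reduce to showing $p\log 2+\frac{p}{2e}n^{2/p}\ge 2\log n$, locate the unique critical point (the paper works directly in $p$ and finds $p_*=\frac{2\log n}{\log(2e)}$, which under your substitution $x=2\log n/p$ is exactly $x_*=1+\log 2$), and observe that equality holds there. Your change of variable has the minor advantage of removing $n$ from the function being minimized, but the argument is otherwise identical.
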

\begin{proof}
It is enough to show that
$$
2\log n\le p\log 2+n^{2/p}p/(2e).
$$
The derivative of the right hand side with respect to $p$ is
$$
\log 2- \frac{n^{2/p}}{2e}\Big(\frac{2\log n}{p}-1\Big),
$$
which is less than zero if and only if $p\leq \frac{2\log n}{\log(2e)}$.
Thus, the minimum of $p\log 2+n^{2/p}p/(2e)$ on $[1,2\log n]$ is attained at $p=\frac{2\log n}{\log(2e)}$,
and at the point the expression is equal to $2\log n$.
\end{proof}

\begin{lemma}[Estimates for $M^{-1}\exp(-M^2/2)$]\label{l: MexpM}
Let $n$ be a large integer, $p\geq 1$, and let $M=M(p)$ be defined as before. Then
\begin{itemize}
\item For $1\leq p\leq \x1^2$, we have
$$M^{-1}\exp(-M^2/2)\simeq n^{-1/p}p^{-1/2}\exp\Big(-\frac{p}{2e}n^{2/p}\Big).$$
\item For $\x1^2<p$, we have
$$M^{-1}\exp(-M^2/2)\simeq\frac{1}{n}\Big(1-\frac{\x1^2-\x1}{p}\Big).$$
\end{itemize}
\end{lemma}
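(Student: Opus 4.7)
The plan is to verify each branch by direct substitution into the definition \eqref{eq: M definition} together with the quantile relation $\xi^{-1}\exp(-\xi^2/2)\simeq 1/n$ that follows from \eqref{eq:Feller quantile}. The first case is immediate: when $1\le p\le \xi^2$, the definition gives $M=n^{1/p}\sqrt{p/e}$ and $M^2=(p/e)\,n^{2/p}$, so
\[
M^{-1}\exp(-M^2/2)=\sqrt{e}\,n^{-1/p}p^{-1/2}\exp\Big(-\frac{p}{2e}n^{2/p}\Big),
\]
which, up to the absolute constant $\sqrt{e}$, is the asserted equivalence.

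For the regime $p>\xi^2$, I would introduce $\alpha=\alpha(p):=\frac{p}{p+\xi-\xi^2}=\big(1-\frac{\xi^2-\xi}{p}\big)^{-1}$, so that $M=\xi\alpha^{1/p}$ and $M^2=\xi^2\alpha^{2/p}$. Two elementary facts about $\alpha$ will be used repeatedly: first, $\alpha>1$ (for large $n$, since $\xi>1$); second, $\alpha\le\xi$, which is equivalent to $p\ge\xi^2$. In particular $\log\alpha\in(0,\log\xi]$. Combining $\xi^{-1}\exp(-\xi^2/2)\simeq 1/n$ with the observation that $\frac{1}{\alpha}=1-\frac{\xi^2-\xi}{p}$, the desired conclusion reduces to showing that
\[
E(p):=\Big(1-\tfrac{1}{p}\Big)\log\alpha-\frac{\xi^2(\alpha^{2/p}-1)}{2}
\]
is bounded in absolute value by a universal constant, uniformly in $p\in(\xi^2,\infty)$ and (large) $n$.

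The main obstacle lies in ensuring that $|E(p)|$ does not blow up near the boundary $p\approx\xi^2$, where $\alpha$ may be as large as $\xi$ and both individual terms of $E(p)$ are substantial. The saving feature is that $\log\alpha/p\le\log\xi/\xi^2\to 0$ uniformly in $p>\xi^2$, so the Taylor expansion $\alpha^{2/p}=1+2\log\alpha/p+O\big((\log\alpha/p)^2\big)$ is valid with a negligible remainder: $\xi^2\cdot(\log\alpha/p)^2\le(\log\xi)^2/\xi^2=o(1)$. Substituting this collapses $E(p)$ to $\log\alpha\cdot\frac{p-1-\xi^2}{p}+o(1)$. The algebraic identity $\frac{p-\xi^2}{p}=\frac{1}{\alpha}-\frac{\xi}{p}$ then rewrites the leading part as $\frac{\log\alpha}{\alpha}-\frac{(\xi+1)\log\alpha}{p}$. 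Since $t\mapsto(\log t)/t$ is bounded on $[1,\infty)$ by $1/e$, the first piece stays in $[0,1/e]$, while the second is at most $(\xi+1)\log\xi/\xi^2\to 0$. Therefore $|E(p)|=O(1)$ uniformly, which gives the claim.
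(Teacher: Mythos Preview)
Your proof is correct and follows essentially the same route as the paper's: the first case is handled by direct substitution of $M^2=(p/e)n^{2/p}$, and for $p>\xi^2$ you reduce to showing a certain logarithmic expression is $O(1)$ by Taylor expanding $\alpha^{2/p}$ (the paper uses the equivalent relation $z-1=\log z+O((z-1)^2)$). The only cosmetic difference is in the final bounding step: the paper arrives at $(1/\alpha)^{\xi^2/p}\simeq 1/\alpha$ and invokes $t^t\ge e^{-1/e}$ with $t=1-\xi^2/p$, whereas you rewrite the leading term via the identity $\tfrac{p-\xi^2}{p}=\tfrac{1}{\alpha}-\tfrac{\xi}{p}$ and use that $(\log t)/t\le 1/e$ on $[1,\infty)$; these are two sides of the same elementary bound.
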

\begin{proof}
For $1\leq p\leq \x1^2$ the statement follows directly from the definition of $M$.
suppose that $\x1^2< p$. Then
$M\simeq \x1$, and, applying \eqref{eq:Feller quantile}, we get
\begin{align*}
M^{-1}\exp(-M^2/2)&\simeq \frac{1}{\x1}\exp\bigg(-\frac{\x1^2}{2}
\Big(\frac{p}{\x1+p-\x1^2}\Big)^{2/p}\bigg)\\
&\simeq\frac{1}{n}\exp\bigg(-\frac{\x1^2}{2}
\Big(\Big(\frac{p}{\x1+p-\x1^2}\Big)^{2/p}-1\Big)\bigg).
\end{align*}
We will use the fact that $z-1=\log z+O((z-1)^2)$ for all $z\geq 1$.
Note that
$$\Big(\Big(\frac{p}{\x1+p-\x1^2}\Big)^{2/p}-1\Big)^2\leq \big(p^{2/p}-1\big)^2\ll\x1^{-2}.$$
Hence, the previous estimate implies
\begin{align*}
M^{-1}\exp(-M^2/2)&\simeq \frac{1}{n}\exp\bigg(-\frac{\x1^2}{2}\,
\log\Big(\frac{p}{\x1+p-\x1^2}\Big)^{2/p}\bigg)\\
&=\frac{1}{n}\Big(\frac{\x1+p-\x1^2}{p}\Big)^{\x1^2/p}\\
&\simeq\frac{1}{n}\Big(1-\frac{\x1^2-\x1}{p}\Big),
\end{align*}
where in the last relation we used the fact that
$t^t\geq e^{-1/e}$ for any $t>0$. This proves the lemma.
\end{proof}
The last lemma obviously provides upper bounds for the first term in \eqref{eq: in cor combined} (for $T=M$).

\begin{lemma}\label{l:2p moment estimate}
Let $n$ be a large integer and let $p\geq 1$.
Then
\begin{itemize}
\item If $1\leq p\leq \frac{2\log n}{\log (2e)}$ then
$$\E(|g|^{2p-2}\chi_{\{|g|\leq M\}})\simeq\bigg(\frac{2p}{e}\bigg)^{p-1};$$
\item If $\frac{2\log n}{\log (2e)}\leq p\leq \x1^2$ then
$$\E(|g|^{2p-2}\chi_{\{|g|\leq M\}})\simeq
\frac{1}{\sqrt{\log n}}\cdot\frac{n^{2}(p/e)^{p}}{\sqrt{\log n}+p-\frac{2\log n}{\log(2e)}}\exp\Big(-\frac{p}{2e}n^{2/p}\Big);$$
\item If $\x1^2<p$ then
$$\frac{\x1^{2p}}{n(\x1+p-\x1^2)}\lesssim
\E(|g|^{2p-2}\chi_{\{|g|\leq M\}})\lesssim\frac{p\x1^{2p-2}}{n(\x1+p-\x1^2)}.$$
\end{itemize}
\end{lemma}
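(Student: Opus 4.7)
The plan is to apply Corollary~\ref{c:moments} to
$\E(|g|^{2p-2}\chi_{\{|g|\leq M\}})=\E\bigl(|g|\chi_{\{|g|\leq M\}}\bigr)^{2p-2}$
with $a=M$ and $q=2p-2$ in each of the three regimes, and then simplify using the definition~\eqref{eq: M definition} of $M$ together with Lemma~\ref{l: MexpM}. Lemma~\ref{l:M} tells us in each case whether $q\leq a^2$ or $q\geq a^2$, and thus which part of the corollary to invoke. In the range $1\leq p\leq\frac{2\log n}{\log(2e)}$, Lemma~\ref{l:M} gives $2p-2\leq M^2$, so Corollary~\ref{c:moments}(i) yields $\E(|g|^{2p-2}\chi_{\{|g|\leq M\}})\simeq((2p-2)/e)^{p-1}\simeq(2p/e)^{p-1}$, which is the first bullet.

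For $\frac{2\log n}{\log(2e)}\leq p\leq \xi^2$, Lemma~\ref{l:M} gives $p\leq M^2\leq 2p$, so $M^2$ and $q$ are of the same order. I parametrize by $\delta:=p-\frac{2\log n}{\log(2e)}\geq 0$; the identity $p\log(2e)=2\log n+\delta\log(2e)$ gives $n^{2/p}=(2e)\exp(-\delta\log(2e)/p)$ and hence
\[
M^2=2p\exp(-\delta\log(2e)/p),\qquad 2p-M^2=2p\bigl(1-e^{-\delta\log(2e)/p}\bigr)\simeq\delta,
\]
where the final comparison uses that the exponent $\delta\log(2e)/p$ lies in $[0,\log 2]$ throughout the regime (at $p=\xi^2\simeq 2\log n$ it equals $\simeq\log 2$). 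I split into two subregimes. When $M^2\geq 2p-2$ (which forces $\delta=O(1)$), apply Corollary~\ref{c:moments}(i) to get $\E\simeq(2p/e)^{p-1}$, and verify this matches the claimed formula at $\delta=O(1)$. When $M^2\leq 2p-2$, apply Corollary~\ref{c:moments}(iii) with $\tau=1/2$ (valid since $M^2\geq p\geq (2p-2)/2$), obtaining $\E\simeq M^{2p-1}e^{-M^2/2}/(M+2p-2-M^2)$. Substituting $M^{2p}=n^2(p/e)^p$, $e^{-M^2/2}=\exp(-pn^{2/p}/(2e))$, $M\simeq\sqrt{\log n}$, and $M+2p-2-M^2\simeq\sqrt{\log n}+\delta$ then yields the second bullet.

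For $p>\xi^2$, Lemma~\ref{l:M} gives $M^2\leq p^{1+1/p}$, which for $n$ large forces $M^2<2p-2$; Corollary~\ref{c:moments}(ii) then provides the two-sided bound
\[
\frac{M^{2p-1}e^{-M^2/2}}{M+2p-2-M^2}\lesssim\E\bigl(|g|^{2p-2}\chi_{\{|g|\leq M\}}\bigr)\lesssim\frac{(2p-2)M^{2p-3}e^{-M^2/2}}{M+2p-2-M^2}.
\]
From~\eqref{eq: M definition} one has $M\simeq\xi$, $M^{2p}=\xi^{2p}(p/(\xi+p-\xi^2))^2$, and $M+2p-2-M^2\simeq p$; combined with $M^{-1}\exp(-M^2/2)\simeq(\xi+p-\xi^2)/(np)$ from Lemma~\ref{l: MexpM}, both ends simplify to the stated bounds of the third bullet. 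The main technical obstacle lies in case 2: one must verify $2p-M^2\simeq\delta$ uniformly over the regime and confirm that the narrow-subregime (i)-bound and the (iii)-bound agree at the transition $M^2=2p-2$. The other cases reduce to direct substitution chaining Corollary~\ref{c:moments} with the pointwise estimates of Lemma~\ref{l: MexpM}.
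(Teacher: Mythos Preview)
Your proposal is correct and follows the paper's overall scheme: in each regime, invoke Corollary~\ref{c:moments} with $a=M$, then simplify via the definition of $M$ and Lemma~\ref{l: MexpM}. Cases~1 and~3 are handled essentially identically to the paper. In Case~2 you and the paper diverge tactically. You apply Corollary~\ref{c:moments} directly with $q=2p-2$, which forces a split into the subregime $M^2\geq 2p-2$ (part~(i) applies, and you must check the answer matches the claimed formula when $\delta=O(1)$) and the subregime $M^2\leq 2p-2$ (part~(iii) applies); along the way you compute the useful relation $2p-M^2\simeq\delta$. The paper instead sidesteps the split entirely by applying the corollary with $q=2p$, for which $M^2\leq 2p$ holds throughout the regime by Lemma~\ref{l:M}; this gives $\E(|g|^{2p}\chi_{\{|g|\leq M\}})\simeq M^{2p+1}e^{-M^2/2}/(M+2p-M^2)$, and the paper then passes back to the $(2p-2)$-th moment via H\"older for the upper bound and the pointwise inequality $|g|^{2p-2}\geq M^{-2}|g|^{2p}$ on $\{|g|\leq M\}$ for the lower bound. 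Your route is more direct and makes the parametrization by $\delta$ explicit, at the cost of a boundary-matching check; the paper's detour through the $2p$-th moment is slightly slicker in that it yields a single formula valid across the whole regime without case analysis.
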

\begin{proof}
First, consider the range $1\leq p\leq \frac{2\log n}{\log (2e)}$. We have
$2p-2\leq n^{2/p}{p}/{e}=M^2$, whence, by Corollary~\ref{c:moments},
$$
\E(|g|^{2p-2}\chi_{\{|g|\leq M\}})\simeq \bigg(\frac{2p-2}{e}\bigg)^{p-1}\simeq \bigg(\frac{2p}{e}\bigg)^{p-1}.
$$

\medskip

Next, assume that $\frac{2\log n}{\log (2e)}\leq p\leq \x1^2$.
Here, $2p\geq n^{2/p}{p}/{e}=M^2$, and in view of Corollary~\ref{c:moments},
$$\E(|g|^{2p}\chi_{\{|g|\leq M\}})\simeq \frac{M^{2p+1}e^{-M^2/2}}{M+2p-M^2}.$$
H\"older's inequality then implies
$$\E(|g|^{2p-2}\chi_{\{|g|\leq M\}})
\leq \big(\E(|g|^{2p}\chi_{\{|g|\leq M\}})\big)^{1-1/p}
\lesssim \frac{M^{2p-1}e^{-M^2/2}}{M+2p-M^2}.$$
On the other hand
$$\E(|g|^{2p-2}\chi_{\{|g|\leq M\}})\geq \frac{1}{M^2}\E(|g|^{2p}\chi_{\{|g|\leq M\}}),$$
whence
$$\E(|g|^{2p-2}\chi_{\{|g|\leq M\}})\simeq \frac{M^{2p-1}e^{-M^2/2}}{M+2p-M^2}.$$
Applying the definition of $M$ and the estimates for $M^{-1}\exp(-M^2/2)$ from Lemma~\ref{l: MexpM}, we get
\begin{align*}
\E(|g|^{2p-2}\chi_{\{|g|\leq M\}})&\simeq \frac{M^{2p}n^{-1/p}p^{-1/2}}{M+2p-M^2}\exp\Big(-\frac{p}{2e}n^{2/p}\Big)\\
&= \frac{1}{\sqrt{p}}\cdot\frac{n^{2-1/p}(p/e)^{p}}{n^{1/p}\sqrt{p/e}+2p-n^{2/p}p/e}\exp\Big(-\frac{p}{2e}n^{2/p}\Big).
\end{align*}
Finally, note that, in the given range for $p$, we have
$$n^{1/p}\sqrt{p/e}+2p-n^{2/p}p/e\simeq n^{1/p}\sqrt{p/e}+p\log(2e)-2\log n\simeq \sqrt{\log n}+p-\frac{2\log n}{\log(2e)}.$$

\medskip

Now, we consider the interval $\x1^2<p$.
In this range we have $2p-2\geq 1.5 M^2$, so
\begin{align*}
\frac{M^{2p-1}}{p}e^{-M^2/2}\simeq
\frac{M^{2p-1}e^{-M^2/2}}{M+2p-2-M^2}
&\lesssim\E(|g|^{2p-2}\chi_{\{|g|\leq M\}})\\
&\lesssim\frac{p M^{2p-3}e^{-M^2/2}}{M+2p-2-M^2}\simeq M^{2p-3}e^{-M^2/2}.
\end{align*}
It remains to apply Lemma~\ref{l: MexpM}.
\end{proof}

\begin{lemma}
\label{l:logA}
Let $n$ be a large integer and let $p\in[1,3\log n]$. Then, with $A$ defined by formula~\eqref{eq: A definition}
with $T=M(p)$, we have $1+\log A\gtrsim p$.
\end{lemma}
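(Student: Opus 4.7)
The plan is to split the second argument of the max in \eqref{eq: A definition} (which we may assume exceeds $1$, else $A=1$ and the target $1+\log A\gtrsim p$ is trivial for $p$ bounded) as a product $R\cdot F$, where
\[
R:=\frac{\E(|g|^{2p-2}\chi_{\{|g|\le M\}})}{\bigl(\E(|g|^{p-1}\chi_{\{|g|\le M\}})\bigr)^2},
\qquad
F:=\frac{\bigl(n\E\min(\x1,|g|)^p\bigr)^{2-2/p}}{M^{2p-2}+\bigl(n\E\min(M,|g|)^p\bigr)^{2-2/p}}.
\]
I will establish $\log R\gtrsim p$ and $\log F\ge -O(\log\log n)$, which together give $\log A\gtrsim p$ for $p$ exceeding an absolute constant (the bounded case being trivial since $A\ge 1$).

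The bound on $R$ is obtained by splitting into the three standard ranges. In Case 1 ($1\le p\le p_1:=2\log n/\log(2e)$), Lemma~\ref{l:M} gives $2p-2\le M^2$, so Corollary~\ref{c:moments}(i) applies to both moments in $R$, yielding $\E(|g|^{2p-2}\chi_{\{|g|\le M\}})\simeq((2p-2)/e)^{p-1}$ and $\E(|g|^{p-1}\chi_{\{|g|\le M\}})\simeq((p-1)/e)^{(p-1)/2}$; dividing gives $R\simeq 2^{p-1}$. In Case 2 ($p_1\le p\le\x1^2$), the bound $p\le M^2$ still yields $\E(|g|^{p-1}\chi_{\{|g|\le M\}})\simeq((p-1)/e)^{(p-1)/2}$, while the middle case of Lemma~\ref{l:2p moment estimate} computes the numerator of $R$. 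A short simplification reduces the lower bound on $\log R$ to showing $h(p):=2\log n-pn^{2/p}/(2e)$ satisfies $h(p)\gtrsim p$ throughout $[p_1,\x1^2]$; this follows because $h$ is increasing on $[0,2\log n]$ with $h(p_1)=p_1\log 2$ (the equality case underlying Lemma~\ref{l:computational}) and $h(2\log n)=\log n$, combined with $p\le 2\log n$. In Case 3 ($\x1^2\le p\le 3\log n$), $M\simeq\x1$ and $e^{-M^2/2}\lesssim\x1/n$; using Lemma~\ref{l:2p moment estimate} (third case) for the numerator and Corollary~\ref{c:moments} for the denominator (case (i) or (ii) depending on whether $p-1\le M^2$ near the boundary $p=\x1^2$, or $p-1>M^2$ further in), a direct computation gives $R\gtrsim n\x1^2(\x1+p-\x1^2)/p^2\gtrsim n/\sqrt{\log n}$, whence $\log R\gtrsim\log n\gtrsim p$.

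The bound on $F$ is essentially a sanity check that $M$ was chosen correctly. Using Lemma~\ref{l:approximation}, together with Corollary~\ref{c:moments} applied to $\E\min(M,|g|)^p$ with $a=M$, one verifies that each of $n\E\min(\x1,|g|)^p$ and $n\E\min(M,|g|)^p$ is comparable to $M^p$ up to at most a polynomial-in-$\log n$ factor; since $M^{2p-2}=(M^p)^{2-2/p}$, this forces $F\ge(\log n)^{-O(1)}$, i.e.\ $\log F\ge -O(\log\log n)$. Combining with the lower bound on $\log R$ finishes the proof. The main obstacle is the bookkeeping in Case 2, where one must identify $h(p)$ as the effective exponent in $\log R$ and verify $h(p)\gtrsim p$ throughout the transition window; Case 3 is also delicate near $p=\x1^2$, where the estimate for $\E(|g|^{p-1}\chi_{\{|g|\le M\}})$ crosses between the two regimes of Corollary~\ref{c:moments}, but in each sub-case the same final bound $R\gtrsim n/\sqrt{\log n}$ persists.
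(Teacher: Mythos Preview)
Your proposal is correct, and the overall three-case structure matches the paper's proof. The substantive difference is in how the product defining $A$ is organized. You keep the two factors $R$ and $F$ separate, estimate $\log R\gtrsim p$ in each case, and then separately argue $\log F\ge -O(\log\log n)$. The paper instead absorbs the $(p-1)$-moment into the $p$-th moment via $\E(|g|^{p-1}\chi_{\{|g|\le M\}})\le(\E\min(M,|g|)^p)^{1-1/p}$ and uses $M^{2p-2}\simeq(n\E\min(\xi,|g|)^p)^{2-2/p}$ (which encodes the definition of $M$) to collapse $R\cdot F$ into a single quantity $B=\frac{M^{2p-2}\E(|g|^{2p-2}\chi_{\{|g|\le M\}})}{n^{2-2/p}(\E\min(M,|g|)^p)^{4-4/p}}$, which is then computed in the same three cases. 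The paper's route is a bit shorter because it never has to estimate $\E(|g|^{p-1}\chi_{\{|g|\le M\}})$ directly, so the delicate sub-case splitting you flag in Case~3 (around whether $p-1\lessgtr M^2$) is bypassed entirely. Your separate treatment of $F$ does work, but note it is not quite a trivial ``sanity check'': in Case~3 one must handle both $p\le M^2$ and $p>M^2$ when bounding $n\E\min(M,|g|)^p$ against $M^p$, and the polylogarithmic loss there is only negligible because $p\gtrsim\log n$ in that range (in Cases~1 and~2 you actually get $F\simeq 1$). Your Case~2 computation via the auxiliary function $h(p)=2\log n-\frac{p}{2e}n^{2/p}$ is exactly the content of the paper's observation that $\exp(-\frac{p}{2e}n^{2/p})$ is minimized at $p_1$ with value $n^{-2/\log(2e)}$.
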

\begin{proof}
Since $M\geq\x1$ for all $p\ge 1$, we have in view of Lemma~\ref{l:approximation} and the definition of $M$:
$$
M^{2p-2}\simeq\big(n\E\min(\x1,|g|)^{p}\big)^{2-2/p}\leq\big(n\E\min(M,|g|)^{p}\big)^{2-2/p}.
$$
Further, $\E(|g|^{p-1}\chi_{\{|g|\le M\}})\leq\E\min(M,\,|g|)^{p-1}\leq (\E\min(M,\,|g|)^{p})^{1-1/p}$. This leads to
\begin{align*}
A&\gtrsim \frac{\E(|g|^{2p-2}\chi_{\{|g|\le M\}})}{\big(\E(|g|^{p-1}\chi_{\{|g|\le M\}})\big)^2}
\cdot\frac{M^{2p-2}}{\big(n\E\min(M,\,|g|)^{p}\big)^{2-2/p}}
\\
&\gtrsim \frac{M^{2p-2}\E(|g|^{2p-2}\chi_{\{|g|\le M\}})}{n^{2-2/p}\big(\E\min(M,\,|g|)^{p}\big)^{4-4/p}}=:B.
\end{align*}
If $1\leq p\leq \frac{2\log n}{\log(2e)}$, then $2p-2\le M^2=n^{2/p}p/e$, and by Corollary \ref{c:moments}, we have
$$
\E(|g|^{2p-2}\chi_{\{|g|\le M\}})\simeq (2p/e)^{p-1}\quad\mbox{and}\quad \E\min(M,\,|g|)^{p}\simeq (p/e)^{p/2}.
$$
Hence,
$$
B\gsm \frac{(n^{2/p}p/e)^{p-1}(2p/e)^{p-1}}{n^{2-2/p}(p/e)^{2p-2}}\simeq 2^p.
$$

If $\frac{2\log n}{\log(2e)}\leq p\leq \x1^2$, then $p\le M^2=n^{2/p}p/e\le 2p$, and by Corollary \ref{c:moments} and
Lemma \ref{l:2p moment estimate}, we get
\begin{align*}
B\gsm (p/e)^{1-p}\,\E(|g|^{2p-2}\chi_{\{|g|\le M\}})
\gsm \frac{n^2\exp\big(-\frac{p}{2e}n^{2/p}\big)}{\sqrt{\log n}}.
\end{align*}
In the range under consideration, the minimum of $\exp\big(-\frac{p}{2e}n^{2/p}\big)$
is attained at $p=\frac{2\log n}{\log(2e)}$, whence $B\gtrsim n^{2\log 2/\log(2e)}(\log n)^{-1/2}$, and the statement follows.

Finally, if $\x1^2\le p\le 3\log\,n$, then $p^{1+1/p}\geq M^2$ and $p\simeq \xi^2$. Denote $q:=p^{1+1/p}$.
By H\"older's inequality and Corollary~\ref{c:moments} we have
$$
\E\min(M,\,|g|)^{p}\leq
\big(\E\min(M,\,|g|)^{q}\big)^{p/q}\simeq \bigg(\frac{M^{q+1}e^{-M^2/2}}{M+q-M^2}\bigg)^{p/q}
\lesssim pM^pe^{-M^2/2}.
$$
On the other hand, Lemma~\ref{l:2p moment estimate} gives
$$
\E(|g|^{2p-2}\chi_{\{|g|\leq M\}})\simeq \frac{\x1^{2p}}{n(\xi+p-\xi^2)}\gtrsim \frac{\x1^{2p-2}}{n}.
$$
Together with Lemma~\ref{l: MexpM} the estimates imply
\begin{align*}
B\gtrsim  \frac{M^{2p-2}}{n^{3}}\frac{\xi^{2p-2}}{\big(pM^p\exp(-M^2/2)\big)^{4-4/p}}
\gtrsim\frac{n}{p^4}\frac{\xi^{2p-2}}{M^{2p+2}}\gtrsim \frac{n}{p^8}.
\end{align*}
This completes the proof of the lemma.
\end{proof}

A combination of Lemmas~\ref{l: MexpM},~\ref{l:2p moment estimate}, and~\ref{l:logA} with
Corollary~\ref{cor:combined} gives
\begin{prop}
\label{p:varupper}
Let $n$ be a large integer and let $p\in[1,3\log n]$. Then
\begin{itemize}
\item For $1\leq p\leq \frac{2\log n}{\log(2e)}$ we have
$$
\Var\|G\|_p\lesssim \frac{2^p}{p}n^{-1+2/p};
$$
\item For $\frac{2\log n}{\log(2e)}\leq p\leq\x1^2$ we have
$$
\Var\|G\|_p\lesssim \frac{1}{\sqrt{\log n}}\cdot
\frac{n\exp(-\frac{p}{2e}n^{2/p})}{\sqrt{\log n}+p-\frac{2\log n}{\log(2e)}};
$$
\item For $\x1^2<p\leq 3\log n$ we have
$$\Var\|G\|_p\lesssim \frac{1}{\log n}\Big(1-\frac{\x1^2-\x1}{p}\Big).$$
\end{itemize}
\end{prop}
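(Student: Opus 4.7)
The strategy is to apply Corollary~\ref{cor:combined} with the specific truncation level $T=M(p)$ introduced in~\eqref{eq: M definition} and then to plug in the estimates already established for each quantity on the right-hand side. By the first bullet of Lemma~\ref{l:M}, this choice is admissible ($M\ge\x1$), so Corollary~\ref{cor:combined} gives
\[
\Var\|G\|_p\;\lesssim\; nM^{-3}\exp(-M^2/2)\;+\;\frac{n^{-1+2/p}}{1+\log A}\cdot\frac{\E\big(|g|^{2p-2}\chi_{\{|g|\le M\}}\big)}{\big(\E\min(\x1,|g|)^p\big)^{2-2/p}}.
\]
Using Lemma~\ref{l:logA} one replaces $1+\log A$ by $p$ throughout, and it remains to handle the three ranges separately, in each case estimating the ``tail'' term and the ``truncated variance'' term and checking that the former is absorbed by the latter.

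In the range $1\le p\le \frac{2\log n}{\log(2e)}$, Corollary~\ref{c:moments} (or the first bullet of Lemma~\ref{l:2p moment estimate}) gives $\E(|g|^{2p-2}\chi_{\{|g|\le M\}})\simeq (2p/e)^{p-1}$ and Lemma~\ref{l:approximation} gives $(\E\min(\x1,|g|)^p)^{2-2/p}\simeq (p/e)^{p-1}$; their quotient is $\simeq 2^{p-1}$, and multiplying by $n^{-1+2/p}/p$ produces the target $\frac{2^p}{p}n^{-1+2/p}$. The tail term, expanded via Lemma~\ref{l: MexpM} as $n^{1-1/p}p^{-1/2}M^{-2}\exp(-\tfrac{p}{2e}n^{2/p})$, is controlled through Lemma~\ref{l:computational}: the exponential factor is at most $n^{-2}2^p$, which renders the tail term negligible compared to the truncated-variance term.

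In the intermediate range $\frac{2\log n}{\log(2e)}\le p\le \x1^2$, the second bullet of Lemma~\ref{l:2p moment estimate} produces exactly the denominator $\sqrt{\log n}+p-\frac{2\log n}{\log(2e)}$ and the key exponential $\exp(-\tfrac{p}{2e}n^{2/p})$ in the numerator, while $(\E\min(\x1,|g|)^p)^{2-2/p}\simeq (p/e)^{p-1}$ by Lemma~\ref{l:approximation}; using that $n^{2/p}$ is bounded in this regime, the algebra collapses to the stated form. The tail term carries the same exponential but only a polylogarithmic prefactor (since $M^2\simeq \log n$ and $p\simeq \log n$ force $nM^{-3}\simeq n(\log n)^{-3/2}$), which is again dominated by the truncated-variance prefactor. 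Finally, for $\x1^2<p\le 3\log n$, Lemma~\ref{l: MexpM} directly yields $nM^{-3}\exp(-M^2/2)\simeq \frac{1}{\log n}(1-\frac{\x1^2-\x1}{p})$, and a routine combination of the third bullets of Lemmas~\ref{l:2p moment estimate} and~\ref{l:approximation} shows that the truncated-variance term is of the same order or smaller.

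The main obstacle is the intermediate window $[\frac{2\log n}{\log(2e)},\x1^2]$: matching the denominator $\sqrt{\log n}+p-\frac{2\log n}{\log(2e)}$ against Talagrand's bound requires the full strength of Lemma~\ref{l:2p moment estimate}, and the comparison between the tail term and the truncated-variance term is tightest precisely at the left endpoint of this window, where the denominator equals $\sqrt{\log n}$. All four preparatory lemmas having been proved already, the work here is essentially bookkeeping rather than a new idea; the cleverness was in choosing the truncation level $M(p)$ so that Lemma~\ref{l:logA} delivers $1+\log A\gtrsim p$ uniformly and the two terms in Corollary~\ref{cor:combined} balance in each regime.
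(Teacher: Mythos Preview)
Your proposal is correct and follows essentially the same route as the paper: apply Corollary~\ref{cor:combined} with $T=M(p)$, invoke Lemma~\ref{l:logA} to replace $1+\log A$ by $p$, and then in each of the three ranges substitute the estimates from Lemmas~\ref{l: MexpM}, \ref{l:2p moment estimate}, and~\ref{l:approximation} (together with Lemma~\ref{l:computational} for the tail comparison in the first range). The paper carries out exactly these substitutions with the algebra written out in full, while you summarize the outcome; there is no substantive difference in method.
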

\begin{proof}
First, assume that $1\leq p\leq \frac{2\log n}{\log(2e)}$. By Corollary~\ref{cor:combined},
Lemmas~\ref{l: MexpM},~\ref{l:2p moment estimate}, and~\ref{l:logA} and Corollary~\ref{c:moments} we have
\begin{align*}
\Var\|G\|_p&\lesssim nM^{-3}\exp(-M^2/2)+\frac{n^{-1+2/p}}{p}
\cdot\frac{\E(|g|^{2p-2}\chi_{\{|g|\leq M\}})}{(\E\min(\x1,|g|)^{p})^{2-2/p}}\\
&\lesssim
n^{1-3/p}p^{-3/2}\exp\Big(-\frac{p}{2e}n^{2/p}\Big)
+\frac{n^{-1+2/p}}{p}\bigg(\frac{2p}{e}\bigg)^{p-1}
\big(\E\min(\x1,|g|)^{p}\big)^{2/p-2}\\
&\lesssim n^{1-3/p}p^{-3/2}\exp\Big(-\frac{p}{2e}n^{2/p}\Big)
+\frac{n^{-1+2/p}}{p}\bigg(\frac{2p}{e}\bigg)^{p-1}
\bigg(\frac{p}{e}\bigg)^{1-p}\\
&\lesssim n^{1-3/p}p^{-3/2}\exp\Big(-\frac{p}{2e}n^{2/p}\Big)
+\frac{2^p\,n^{-1+2/p}}{p}.
\end{align*}
It remains to apply Lemma~\ref{l:computational} to the first term.

\medskip

Next, we treat the case $\frac{2\log n}{\log(2e)}\leq p\leq \x1^2$. Using the same argument as above
and the fact that $n^{1/p}=O(1)$, we obtain
\begin{align*}
\frac{\E(|g|^{2p-2}\chi_{\{|g|\leq M\}})}{(\E\min(\x1,|g|)^{p})^{2-2/p}}
&\lesssim \frac{1}{\sqrt{\log n}}\frac{n^{2}(p/e)^{p}\exp(-\frac{p}{2e}n^{2/p})}{\sqrt{\log n}+p-\frac{2\log n}{\log(2e)}}
\big(\E\min(\x1,|g|)^{p}\big)^{2/p-2}\\
&\lesssim\frac{1}{\sqrt{\log n}}\frac{n^{2}(p/e)^{p}\exp(-\frac{p}{2e}n^{2/p})}{\sqrt{\log n}+p-\frac{2\log n}{\log(2e)}}
\bigg(\frac{p}{e}\bigg)^{1-p}\\
&\simeq\frac{\sqrt{\log n}\,n^{2}\exp(-\frac{p}{2e}n^{2/p})}{\sqrt{\log n}+p-\frac{2\log n}{\log(2e)}}.
\end{align*}
Hence,
\begin{align*}
\Var\|G\|_p&\lesssim nM^{-3}\exp(-M^2/2)+\frac{n^{-1}}{p}
\cdot\frac{\E(|g|^{2p-2}\chi_{\{|g|\leq M\}})}{(\E\min(\x1,|g|)^{p})^{2-2/p}}\\
&\lesssim
\frac{n}{(\log n)^{3/2}}\exp\Big(-\frac{p}{2e}n^{2/p}\Big)
+\frac{\sqrt{\log n}}{p}\cdot
\frac{n\exp(-\frac{p}{2e}n^{2/p})}{\sqrt{\log n}+p-\frac{2\log n}{\log(2e)}}\\
&\lesssim \frac{1}{\sqrt{\log n}}\cdot
\frac{n\exp(-\frac{p}{2e}n^{2/p})}{\sqrt{\log n}+p-\frac{2\log n}{\log(2e)}}.
\end{align*}

\medskip

Finally, we consider the range $\x1^2<p\leq 3\log n$.
We have, in view of Lemma~\ref{l:2p moment estimate}, Corollary~\ref{c:moments} and relation~\eqref{eq:Feller quantile}:
\begin{align*}
\frac{\E(|g|^{2p-2}\chi_{\{|g|\leq M\}})}{(\E\min(\x1,|g|)^{p})^{2-2/p}}
&\lesssim \frac{\x1^{2p}}{n(\x1+p-\x1^2)}
\bigg(\frac{\x1^{p+1}\exp(-\x1^2/2)}{\x1+p-\x1^2}\bigg)^{2/p-2}\\
&\simeq \frac{\x1^{2p}}{n(\x1+p-\x1^2)}
\bigg(\frac{\x1^{p+2}}{n(\x1+p-\x1^2)}\bigg)^{2/p-2}\\
&\simeq \frac{n(\x1+p-\x1^2)}{p}.
\end{align*}
Thus,
\begin{align*}
\Var\|G\|_p&\lesssim nM^{-3}\exp(-M^2/2)+\frac{n^{-1}}{p}
\cdot\frac{\E(|g|^{2p-2}\chi_{\{|g|\leq M\}})}{(\E\min(\x1,|g|)^{p})^{2-2/p}}\\
&\lesssim
\frac{1}{p}\Big(1-\frac{\x1^2-\x1}{p}\Big)+
\frac{\x1+p-\x1^2}{p^2}\\
&\lesssim \frac{1}{\log n}\Big(1-\frac{\x1^2-\x1}{p}\Big).
\end{align*}
\end{proof}

Note that in Proposition \ref{p:varupper} we treat the cae $p<3\log n$.  In the regime $p>3\log n$, we will rely on the following result from \cite{PVZ}:
\begin{lemma}[{\cite[Section~3]{PVZ}}]\label{l:PVZ upper}
We have $\Var\|G\|_p\lesssim\frac{1}{\log n}$ for all $n>1$ and $p\geq 2.01$.
\end{lemma}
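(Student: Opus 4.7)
The plan is to split the range $p \in [2.01, \infty)$ into the two sub-ranges $[2.01, 3\log n]$ and $(3\log n, \infty)$ and treat each separately. For $p \in [2.01, 3\log n]$, I would invoke Proposition~\ref{p:varupper} and verify that each of its three pieces is dominated by $1/\log n$. In the first regime $2.01 \leq p \leq \frac{2\log n}{\log(2e)}$, the function $p \mapsto \log\bigl(\frac{2^p}{p}n^{2/p-1}\bigr) = p\log 2 + (2\log n)/p - \log n - \log p$ is convex in $p$, so its maximum on the interval is attained at one of the endpoints, and a direct calculation shows both evaluate to a polynomially small function of $n$, hence to $\ll 1/\log n$. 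In the middle regime $\frac{2\log n}{\log(2e)} \leq p \leq \xi^2$, the decisive input is the global minimum $\frac{p}{2e}n^{2/p} \geq \log n$ (attained at $p = 2\log n$, see Lemma~\ref{l:computational}), which gives $n\exp(-\frac{p}{2e}n^{2/p}) \leq 1$ and so reduces the bound to at most $\frac{1}{\sqrt{\log n}} \cdot \frac{1}{\sqrt{\log n}} = \frac{1}{\log n}$. The third regime $\xi^2 < p \leq 3\log n$ is immediate since $1 - (\xi^2 - \xi)/p \leq 1$.

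For the tail $p > 3\log n$, my plan is to apply Talagrand's $L_1$--$L_2$ bound (Theorem~\ref{t:talagrand}) directly to $\|G\|_p$. Homogeneity gives $|\partial_i \|G\|_p| = (|g_i|/\|G\|_p)^{p-1}$, so introducing $y_i := |g_i|/\|G\|_p$ with $\sum_i y_i^p = 1$, two pointwise bounds are immediate: since $2p - 2 \geq p$ and $y_i \in [0,1]$, $\sum_i y_i^{2p-2} \leq \sum_i y_i^p = 1$; and by H\"older's inequality with exponents $p/(p-1)$ and $p$, $\sum_i y_i^{p-1} \leq \bigl(\sum_i y_i^p\bigr)^{(p-1)/p} n^{1/p} = n^{1/p}$. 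Taking expectations and using coordinate symmetry, $n\E|\partial_1|^2 \leq 1$ and $n\E|\partial_1| \leq n^{1/p} \leq e^{1/3}$ whenever $p \geq 3\log n$.

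The hardest part will be bounding the logarithmic denominator in Talagrand from below by a constant multiple of $\log n$. For this I plan to match the upper bound $\E|\partial_1| \leq e^{1/3}/n$ with a lower bound $\E|\partial_1|^2 \gtrsim 1/n$. The latter should follow from the observation that, on a high-probability event controlling all order statistics via \eqref{eq:quantile} and the Chernoff-type estimates of Section~\ref{ss:chernoff}, one has $\sum_{k \geq 2}(g_k^*/g_1^*)^p = O(1)$ (the summands decay at least like $k^{-p/(2\log n)} \leq k^{-3/2}$), hence $\|G\|_p \leq (1 + O(1/\log n))\,g_1^*$ on that event, and therefore $\sum_i y_i^{2p-2} \geq (g_1^*/\|G\|_p)^{2p-2} \gtrsim 1$ there. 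Combining yields $\sqrt{\E|\partial_1|^2}/\E|\partial_1| \gtrsim \sqrt{n}$, a Talagrand denominator $\gtrsim \tfrac{1}{2}\log n$, and finally $\Var\|G\|_p \lesssim 1/\log n$, as required.
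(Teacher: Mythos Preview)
The paper does not prove this lemma at all; it is quoted verbatim from \cite{PVZ}, and in the paper's logic it is only invoked to cover the range $p>3\log n$ left open by Proposition~\ref{p:varupper}. So there is nothing to compare against --- you are supplying an argument where the paper defers to an external reference.

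Your treatment of $2.01\le p\le 3\log n$ via Proposition~\ref{p:varupper} is correct (the convexity check in the first regime and the use of $\tfrac{p}{2e}n^{2/p}\ge\log n$ in the second are both valid), though somewhat circular in spirit: you are deducing the coarse $1/\log n$ bound from a much finer estimate, and the paper needs the present lemma precisely where Proposition~\ref{p:varupper} does \emph{not} apply. For $p>3\log n$ your Talagrand strategy is sound, and the pointwise inequalities $\sum_i y_i^{2p-2}\le 1$, $\sum_i y_i^{p-1}\le n^{1/p}$ are correct. The one soft spot is the lower bound $\E|\partial_1|^2\gtrsim 1/n$: your sketch reduces this to $\sum_{k\ge 2}(g_k^*/g_1^*)^p=O(1)$ with positive probability, which is true, but the lemmas in Section~\ref{ss:chernoff} that you cite are all \emph{lower}-deviation bounds $\Prob\{g_i^*\le u\,\xi_{1-i/n}\}$, whereas controlling the ratios from above requires upper deviations for $g_k^*$, $k\ge 2$. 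These follow from the same Chernoff recipe but are not stated in the paper. A cleaner route staying within the paper's toolbox: on the event $\{g_1^*\ge\xi,\ g_2^*\le\xi\}$ (probability $\simeq 1/e$) one has $\sum_{k\ge 2}(g_k^*)^p\le \sum_i|g_i|^p\chi_{\{|g_i|\le\xi\}}$; by Corollary~\ref{c:moments}(ii) together with \eqref{eq:Feller quantile} the expectation of the latter is $\lesssim\xi^p$, so Markov's inequality gives $\sum_{k\ge 2}(g_k^*)^p\le C\xi^p\le C(g_1^*)^p$ on an event of uniformly positive probability, whence $(g_1^*/\|G\|_p)^{2p-2}\ge(1+C)^{-2}$ there and $\E|\partial_1|^2\gtrsim 1/n$ follows.
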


\section{Lower bounds for the variance}\label{s:lower}

Let us start with a useful auxiliary result from \cite{PVZ}. We provide a proof for the reader's convenience.
\begin{lemma}[{\cite[Section~3]{PVZ}}]\label{l:PVZ}
Let $p\geq 1$ and $n>1$. Then
$$\Var(\|G\|_p)\geq\frac{n}{2p^2}\E\big((|g_1|^p-|g_1'|^p)(\|G\|_p^p+\|G'\|_p^p)^{1/p-1}\big)^2,$$
where $G=(g_1,g_2,\dots,g_n)$ and $G'=(g_1',g_2',\dots,g_n')$ are independent standard Gaussian vectors in $\R^n$.
\end{lemma}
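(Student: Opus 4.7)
The plan is to use the standard variance representation via an independent copy together with two elementary reductions: a concavity bound to peel off the $1/p$ exponent, and a Rademacher symmetrization to kill the cross-terms produced when squaring a sum.

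First, I would write
\[
\Var(\|G\|_p)=\tfrac{1}{2}\E(\|G\|_p-\|G'\|_p)^2,
\]
the usual identity for variance in terms of an independent copy $G'=(g_1',\dots,g_n')$. The task then reduces to bounding the right-hand side from below by the quantity in the statement.

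Next, I would exploit the concavity of the function $h(t)=t^{1/p}$ on $[0,\infty)$, which holds since $p\ge 1$. For any $a\ge b\ge 0$ the tangent-line inequality at $a$ gives $a^{1/p}-b^{1/p}\ge \tfrac{1}{p}a^{1/p-1}(a-b)$, and since $1/p-1\le 0$ and $a\le a+b$, we may replace $a^{1/p-1}$ by $(a+b)^{1/p-1}$. Applied with $a=\|G\|_p^p$, $b=\|G'\|_p^p$ (or the reverse), this yields
\[
(\|G\|_p-\|G'\|_p)^2\ \ge\ \frac{1}{p^2}\,(\|G\|_p^p+\|G'\|_p^p)^{2/p-2}\Big(\sum_{i=1}^n\big(|g_i|^p-|g_i'|^p\big)\Big)^2.
\]

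The main step, and the only nontrivial idea, is to symmetrize so that the square of a sum collapses to a sum of squares. I would introduce independent Rademacher signs $\sigma_1,\dots,\sigma_n$ (independent of $G,G'$) and, for each $i$, swap $g_i$ with $g_i'$ exactly when $\sigma_i=-1$, obtaining vectors $G^\sigma,G'^\sigma$. Two observations are crucial: the sum $\|G^\sigma\|_p^p+\|G'^\sigma\|_p^p=\sum_i(|g_i|^p+|g_i'|^p)$ is invariant under $\sigma$, and $\sum_i\big(|g_i^\sigma|^p-|g_i'^\sigma|^p\big)=\sum_i\sigma_i\big(|g_i|^p-|g_i'|^p\big)$. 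Since $(G^\sigma,G'^\sigma)\stackrel{d}{=}(G,G')$ for every fixed $\sigma$, taking expectations and averaging over $\sigma$ gives, using $\E_\sigma\big(\sum_i\sigma_i X_i\big)^2=\sum_i X_i^2$,
\[
\E(\|G\|_p-\|G'\|_p)^2\ \ge\ \frac{1}{p^2}\E\Big[(\|G\|_p^p+\|G'\|_p^p)^{2/p-2}\sum_{i=1}^n\big(|g_i|^p-|g_i'|^p\big)^2\Big].
\]

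To finish, I would note that the $n$ summands on the right are identically distributed by coordinate exchangeability, so the expectation equals $n$ times the $i=1$ term; combining with $\Var=\tfrac{1}{2}\E(\cdot)^2$ yields the claimed inequality. I do not anticipate a serious obstacle: the concavity bound is one line, and the Rademacher symmetrization trick is the real content but is standard once one notices that $\|G\|_p^p+\|G'\|_p^p$ is sign-symmetric in the swap $g_i\leftrightarrow g_i'$.
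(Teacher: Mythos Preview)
Your proof is correct and follows essentially the same route as the paper: the variance identity with an independent copy, a concavity bound to pass from $\|G\|_p-\|G'\|_p$ to $\|G\|_p^p-\|G'\|_p^p$, and then elimination of the cross-terms. Your Rademacher symmetrization is just a repackaging of the swap symmetry $g_i\leftrightarrow g_i'$ that the paper uses directly to show the off-diagonal terms in the expanded double sum vanish.
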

\begin{proof}
First, clearly $\Var(\|G\|_p)=\frac{1}{2}\E(\|G\|_p-\|G'\|_p)^2$.
Next, it can be checked, using elementary convexity properties, that for any two positive real numbers $a,b$ we have
$$\big|a^{1/p}-b^{1/p}\big|\geq \frac{1}{p}|a-b|\Big(\frac{a+b}{2}\Big)^{1/p-1}.$$
Applying the above inequality for $a:=\|G\|_p^p$ and $b:=\|G'\|_p^p$, we obtain
\begin{align*}
\Var(\|G\|_p)&\geq\frac{1}{2p^2}\E\bigg(\big| \|G\|_p^p-\|G'\|_p^p \big|
\Big(\frac{\|G\|_p^p+\|G'\|_p^p}{2}\Big)^{1/p-1}\bigg)^2\\
&=\frac{2^{2-2/p}}{2p^2}
\E\bigg(\Big(\sum_{i=1}^n (|g_i|^p-|g_i'|^p) \Big)
\big(\|G\|_p^p+\|G'\|_p^p\big)^{1/p-1}\bigg)^2\\
&=\frac{2^{2-2/p}}{2p^2}\sum_{i=1}^n\sum_{j=1}^n
\E\Big((|g_i|^p-|g_i'|^p)(|g_j|^p-|g_j'|^p)
\big(\|G\|_p^p+\|G'\|_p^p\big)^{2/p-2}\Big).
\end{align*}
It is easy to see that, for $i\neq j$, the terms in the above sum are equal zero, whence
$$\Var(\|G\|_p)\geq\frac{2^{2-2/p}}{2p^2}\sum_{i=1}^n
\E\Big((|g_i|^p-|g_i'|^p)^2
\big(\|G\|_p^p+\|G'\|_p^p\big)^{2/p-2}\Big).$$
The result follows.
\end{proof}

As a simple corollary, we obtain the main technical element of the section:
\begin{lemma}\label{l:varlower}
There is a universal constant $C$ with the following property.
Assume that $n>1$ and $p\geq C$. Further, let $T\geq 2$ and $\tau\in(0,1)$ be any numbers such that
$$
\sum_{i=1}^{2n-2}|g_i|^p\leq T^p\;\;\mbox{with probability at least}\;\;\tau,
$$
where $g_1,g_2,\dots,g_{2n-2}$ are i.i.d.\ standard Gaussians. Then for the standard Gaussian vector $G$ in $\R^n$ we have
$$
\Var(\|G\|_p)\gtrsim \frac{\tau\, n}{p^2}\frac{\E\big(|g|^{2p}\chi_{\{|g|\leq T\}}\big)}{T^{2p-2}}
\geq \frac{\tau\, n}{p^2}\frac{\big(\E\big(|g|^{2p-2}\chi_{\{|g|\leq T\}}\big)\big)^{\frac{p}{p-1}}}{T^{2p-2}}.
$$
\end{lemma}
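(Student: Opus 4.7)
The starting point is Lemma~\ref{l:PVZ}, which (upon expanding the square under the expectation, exactly as in its proof) yields
\[
\Var(\|G\|_p)\gtrsim\frac{n}{p^2}\,\E\bigl((|g_1|^p-|g_1'|^p)^2(\|G\|_p^p+\|G'\|_p^p)^{2/p-2}\bigr).
\]
To capture the hypothesis on the partial sums, I would introduce the event
\[
\Event:=\Bigl\{\sum\nolimits_{i=2}^n|g_i|^p+\sum\nolimits_{i=2}^n|g_i'|^p\le T^p\Bigr\},
\]
which depends only on $2n-2$ i.i.d.\ standard Gaussians, so by the assumption $\Prob(\Event)\ge\tau$, and $\Event$ is independent of $(g_1,g_1')$. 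On $\Event\cap\{|g_1|,|g_1'|\le T\}$ one has $\|G\|_p^p+\|G'\|_p^p\le 3T^p$, hence $(\|G\|_p^p+\|G'\|_p^p)^{2/p-2}\ge 3^{2/p-2}T^{2-2p}\gtrsim T^{2-2p}$ (the factor $3^{2/p-2}$ lies in $[1/9,1]$ for $p\ge 1$). Using the independence of $\Event$ from $(g_1,g_1')$, this gives
\[
\Var(\|G\|_p)\gtrsim\frac{\tau\,n\,T^{2-2p}}{p^2}\,\E\bigl((|g_1|^p-|g_1'|^p)^2\chi_{\{|g_1|\le T\}}\chi_{\{|g_1'|\le T\}}\bigr).
\]

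The heart of the proof is then to show that the remaining expectation is $\gtrsim \E(|g|^{2p}\chi_{\{|g|\le T\}})$. I would restrict further to the set $\{|g_1|\in[1/2,T]\}\cap\{|g_1'|\le|g_1|/2\}$. There $|g_1|^p-|g_1'|^p\ge|g_1|^p(1-2^{-p})\ge|g_1|^p/2$ for every $p\ge 1$, so $(|g_1|^p-|g_1'|^p)^2\ge|g_1|^{2p}/4$; and by independence, $\Prob(|g_1'|\le|g_1|/2\mid|g_1|=x)\ge\Prob(|g|\le 1/4)=:c_0>0$ whenever $x\ge 1/2$. Thus
\[
\E\bigl((|g_1|^p-|g_1'|^p)^2\chi_{\{|g_1|\le T\}}\chi_{\{|g_1'|\le T\}}\bigr)\gtrsim \E\bigl(|g|^{2p}\chi_{\{1/2\le|g|\le T\}}\bigr).
\]
To remove the lower cutoff at $1/2$, note that $\E(|g|^{2p}\chi_{\{|g|\le 1/2\}})\le 2^{-2p}$, while the condition $T\ge 2$ gives $\E(|g|^{2p}\chi_{\{|g|\le T\}})\ge 1.5^{2p}\,\Prob(1.5\le|g|\le 2)\gtrsim 1.5^{2p}$; the ratio of the former to the latter is at most $O(9^{-p})$, which is below $1/2$ as soon as $p$ exceeds a universal constant $C$. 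Consequently $\E(|g|^{2p}\chi_{\{1/2\le|g|\le T\}})\ge\tfrac12\E(|g|^{2p}\chi_{\{|g|\le T\}})$, and assembling the bounds proves the first asserted inequality.

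The second inequality is immediate from H\"older's inequality applied with exponents $p/(p-1)$ and $p$: one has $\E(|g|^{2p-2}\chi_{\{|g|\le T\}})\le (\E(|g|^{2p}\chi_{\{|g|\le T\}}))^{(p-1)/p}\cdot\Prob(|g|\le T)^{1/p}\le(\E(|g|^{2p}\chi_{\{|g|\le T\}}))^{(p-1)/p}$, and raising to the power $p/(p-1)$ gives exactly the claimed estimate. The main technical obstacle in the plan is the lower bound on the two-variable expectation: a priori the expectation $\E(|g|^{2p}\chi_{\{|g|\le T\}})$ could be almost cancelled by the subtracted $(\E(|g|^p\chi_{\{|g|\le T\}}))^2$ coming from $\E(|g_1|^p - |g_1'|^p)^2$, but the sub-event $\{|g_1'|\le|g_1|/2\}$ circumvents this difficulty by forcing one coordinate to dominate the other, after which only a crude tail comparison is needed.
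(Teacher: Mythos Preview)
Your proof is correct and follows essentially the same route as the paper: start from Lemma~\ref{l:PVZ}, introduce the event $\Event$ on the remaining $2n-2$ coordinates, bound $\|G\|_p^p+\|G'\|_p^p\le 3T^p$ on $\Event\cap\{|g_1|,|g_1'|\le T\}$, and then decouple $g_1$ from $g_1'$ by forcing $g_1'$ to be small. The only minor difference is in that decoupling step: the paper restricts to $\{|g_1'|\le 1\}$ and uses the algebraic inequality $(a-b)^2>a^2/4-1/2$ (absorbing the constant $-2$ via Corollary~\ref{c:moments}), whereas you use the relative constraint $\{|g_1'|\le|g_1|/2\}$ and then remove the lower cutoff $|g_1|\ge 1/2$ by a direct tail comparison---both variants work equally well.
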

\begin{proof}
In view of Lemma~\ref{l:PVZ}, we have
$$
\Var(\|G\|_p)\geq\frac{n}{2p^2}\E\big((|g_1|^p-|g_1'|^p)(\|G\|_p^p+\|G'\|_p^p)^{1/p-1}\big)^2,
$$
where $G'=(g_1',g_2',\dots,g_n')$ is an independent copy of $G=(g_1,g_2,\dots,g_n)$.
By the assumptions on $T$ we have $\Prob\{\sum_{i=2}^n |g_i|^p+\sum_{i=2}^n |g_i'|^p\leq T^p\}\geq \tau$, whence
$$
\E\big((|g_1|^p-|g_1'|^p)(\|G\|_p^p+\|G'\|_p^p)^{1/p-1}\big)^2
\geq \tau\,\E\big((|g_1|^p-|g_1'|^p)(|g_1|^p+|g_1'|^p+T^p)^{1/p-1}\big)^2.
$$
Further, observe that for any two numbers $a\ge 0$ and $0\le b\le 1$ we have $(a-b)^2> a^2/4-1/2$, whence,
in particular,
$$(|g_1|^p-|g_1'|^p)^2\chi_{\{|g_1'|\leq 1\}}
\geq \frac{1}{4}|g_1|^{2p}\chi_{\{|g_1'|\leq 1\}}-\frac{1}{2}\chi_{\{|g_1'|\leq 1\}}.$$
Together with the above inequalities, it gives
\begin{align*}
\Var(\|G\|_p)&\geq\frac{\tau\,n}{2p^2}\E\big((|g_1|^p-|g_1'|^p)(|g_1|^p+|g_1'|^p+T^p)^{1/p-1}\big)^2
\\
&\gtrsim \frac{\tau\,n}{p^2}\E\bigg(\frac{(|g_1|^p-|g_1'|^p)^2\chi_{\{|g_1|\leq T;|g'_1|\leq 1\}}}{(3T^p)^{2-2/p}}\bigg)
\\
&\gtrsim \frac{\tau\,n}{p^2}\frac{\E(|g_1|^{2p}\chi_{\{|g_1|\leq T;|g'_1|\leq 1\}}-{2})}{T^{2p-2}}
\\
&\gtrsim\frac{\tau\,n}{p^2}\frac{\E\big(|g_1|^{2p}\chi_{\{|g_1|\leq T\}}\big)}{T^{2p-2}},
\end{align*}
where in the last step we used that, by Corollary~\ref{c:moments},
$$
\E(|g_1|^{2p}\chi_{\{|g_1|\leq T\}})\ge\E(|g_1|^{2p}\chi_{\{|g_1|\leq 2\}})\gsm p^{-1}{2^{2p+1}}\gg 2
$$
if $p$ is big enough.
\end{proof}

Naturally, we would like to apply the above lemma with $T$ close to $M(p)$ where the truncation level $M(p)$ was
defined in Section~\ref{s:upper}. We have
\begin{lemma}\label{l:varlower adapted}
Let $n$ be a large integer and let $C$ be the constant from Lemma~\ref{l:varlower}.
Then for any $p\geq C$ we have
$$
\Var(\|G\|_p)\gtrsim\frac{n}{p^2}\frac{\big(\E\big(|g|^{2p-2}\chi_{\{|g|\leq M\}}\big)\big)^{\frac{p}{p-1}}}{M^{2p-2}},
$$
where $M=M(p)$ is defined by formula \eqref{eq: M definition}.
\end{lemma}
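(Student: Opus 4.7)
The plan is to apply Lemma~\ref{l:varlower} with $T:=K^{1/p}M$, where $K\ge 1$ is a sufficiently large universal constant. With this choice $T^p=KM^p$ and $T^{2p-2}=K^{2-2/p}M^{2p-2}\le K^2 M^{2p-2}$, so a constant factor in the hypothesis on $T$ translates into only a constant factor loss in the conclusion. Moreover $T\ge M\ge\xi\gtrsim\sqrt{\log n}\ge 2$ by Lemma~\ref{l:M}, so the condition $T\ge 2$ is met for $n$ large. Once the probabilistic hypothesis $\Prob\{\sum_{i=1}^{2n-2}|g_i|^p\le T^p\}\ge\tau$ is verified for some universal $\tau>0$, the second conclusion of Lemma~\ref{l:varlower} together with the monotonicity $\E(|g|^{2p-2}\chi_{\{|g|\le T\}})\ge\E(|g|^{2p-2}\chi_{\{|g|\le M\}})$ produces exactly the claimed bound.

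To verify the hypothesis, I would combine a ``maximum-is-not-too-large'' event with Markov's inequality for a truncated sum. Set $\Event_1:=\{\max_{i\le 2n-2}|g_i|\le M\}$. Since Lemma~\ref{l:M} gives $M\ge\xi=\xi_{1-1/n}$ for all $p\ge 1$, one has $\Prob\{|g|>M\}\le 1/n$, and hence $\Prob(\Event_1)\ge (1-1/n)^{2n-2}\ge c_1$ for a universal constant $c_1>0$. On $\Event_1$ one has $|g_i|^p=\min(M,|g_i|)^p$ for every $i$, so it is enough to bound the truncated sum. Markov's inequality gives
\[
\Prob\Big\{\textstyle\sum_{i=1}^{2n-2}\min(M,|g_i|)^p>KM^p\Big\}\le\frac{2n\,\E\min(M,|g|)^p}{KM^p},
\]
so it is enough to establish the key estimate $n\,\E\min(M,|g|)^p\lesssim M^p$, and then choose $K$ large enough that the right-hand side is at most $c_1/2$; intersecting with $\Event_1$ then yields the required probability bound with $\tau:=c_1/2$.

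The key estimate $n\,\E\min(M,|g|)^p\lesssim M^p$ follows from the definition \eqref{eq: M definition} of $M$ together with Corollary~\ref{c:moments} and Lemma~\ref{l: MexpM}. For $1\le p\le\xi^2$, Lemma~\ref{l:M} gives $p\le M^2$, so case (i) of Corollary~\ref{c:moments} yields $\E\min(M,|g|)^p\simeq(p/e)^{p/2}$, matching $M^p/n=(p/e)^{p/2}$. For $p>\xi^2$, the definition of $M$ forces $M\simeq\xi$, and combining case (ii) of Corollary~\ref{c:moments} with the asymptotic $M^{-1}e^{-M^2/2}\simeq n^{-1}(\xi+p-\xi^2)/p$ from Lemma~\ref{l: MexpM} leads to $n\,\E\min(M,|g|)^p\lesssim M^p(\xi+p-\xi^2)/(M+p-M^2)$, a ratio of linear forms in $p$ that is $O(1)$ since $M\simeq\xi$ makes the numerator and denominator of the same order.

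The main obstacle I anticipate is the narrow transition window around $p\simeq\xi^2$, where $M^2-\xi^2$ and $p-\xi^2$ are both of order $\log\log n$ and one must check carefully that the denominator $M+p-M^2$ does not become artificially small; a separate treatment via case (i) (or case (iii)) of Corollary~\ref{c:moments} in the sub-range where $M^2$ marginally exceeds $p$ should close this gap and complete the argument.
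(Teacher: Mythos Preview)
Your approach is essentially the paper's: choose $T=C^{1/p}M$, control the untruncated sum by combining a ``maximum is small'' event with Markov's inequality on a truncated sum, then invoke Lemma~\ref{l:varlower}. The only substantive difference is where you truncate. The paper truncates at $\xi$, not $M$: it bounds
\[
\Prob\Big\{\sum_{i=1}^{2n-2}|g_i|^p>C_1M^p\Big\}
\le \Prob\Big\{\sum\min(\xi,|g_i|)^p>\tfrac{C_1}{2}M^p\Big\}
+\Prob\big\{\exists i:|g_i|>\xi\big\},
\]
so the Markov step only needs $n\E\min(\xi,|g|)^p\lesssim M^p$, which is immediate from Lemma~\ref{l:approximation} and the very definition of $M$. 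No case analysis, no transition window.

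Your version truncates at $M$ and therefore needs $n\E\min(M,|g|)^p\lesssim M^p$. This is true, but your route through Corollary~\ref{c:moments}(ii) and Lemma~\ref{l: MexpM} is unnecessarily delicate (and, as you note, case~(ii) need not apply when $p$ sits just above $\xi^2$). The one-line fix is
\[
\min(M,|g|)^p\le \min(\xi,|g|)^p+M^p\chi_{\{|g|>\xi\}},
\]
so $n\E\min(M,|g|)^p\le n\E\min(\xi,|g|)^p+M^p\lesssim M^p$, uniformly in $p$. With this, your anticipated obstacle evaporates and the two proofs become virtually identical.
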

\begin{proof}
As it was observed back in Lemma~\ref{l:approximation}, we have
$$M^p\gtrsim n\E\min(\x1,|g|)^p.$$
In particular, there is a universal constant $C_1>0$ such that
$$C_1 M^p\geq 4e^3n\E\min(\x1,|g|)^p.$$
By Markov's inequality, given $2n-2$ i.i.d.\ Gaussian variables $g_1,g_2,\dots,g_{2n-2}$,
we have
\begin{align*}
\Prob\Big\{\sum_{i=1}^{2n-2}|g_i|^p> C_1M^p\Big\}
&\leq \Prob\Big\{\sum_{i=1}^{2n-2}\min(\x1,|g_i|)^p> \frac{C_1}{2}M^p\Big\}\\
&\hspace{3cm}+\Prob\Big\{\sum_{i=1}^{2n-2}|g_i|^p\chi_{\{|g_i|\geq \x1\}}> \frac{C_1}{2}M^p\Big\}\\
&\leq e^{-3}+\Prob\Big\{\sum_{i=1}^{2n-2}|g_i|^p\chi_{\{|g_i|\geq \x1\}}> \frac{C_1}{2}M^p\Big\}.
\end{align*}
Further, we observe that
\begin{align*}
\Prob\Big\{\sum_{i=1}^{2n-2}|g_i|^p\chi_{\{|g_i|\geq \x1\}}> \frac{C_1}{2}M^p\Big\}
\leq \prod_{i=1}^{2n-2}\Prob\{|g_i|\geq \x1\}
=1-\bigg(1-\frac{1}{n}\bigg)^{2n-2}\leq 1-e^{-2},
\end{align*}
whence
$$\Prob\Big\{\sum_{i=1}^{2n-2}|g_i|^p> C_1M^p\Big\}\leq 1+e^{-3}-e^{-2}.$$
Thus, $T:=C_1^{1/p}M$ and $\tau:=e^{-2}-e^{-3}$ satisfy conditions of Lemma~\ref{l:varlower}.
Applying Lemma~\ref{l:varlower}, we obtain
$$\Var\|G\|_p
\gtrsim \frac{n}{p^2}\frac{\big(\E\big(|g|^{2p-2}\chi_{\{|g|\leq T\}}\big)\big)^{\frac{p}{p-1}}}{C_1^{2-2/p}M^{2p-2}}
\gtrsim \frac{n}{p^2}\frac{\big(\E\big(|g|^{2p-2}\chi_{\{|g|\leq M\}}\big)\big)^{\frac{p}{p-1}}}{M^{2p-2}}.$$
\end{proof}

As a corollary of Lemma~\ref{l:varlower adapted} and
bounds on truncated moments from Lemma~\ref{l:2p moment estimate}, we obtain
\begin{prop}\label{p:varlower}
Let $n$ be a large integer and let $p\geq C$, where $C$ is defined in Lemma~\ref{l:varlower}. Then
\begin{itemize}
\item For $1\leq p\leq \frac{2\log n}{\log(2e)}$ we have
$$\Var\|G\|_p\gtrsim \frac{2^p}{p}\,n^{-1+2/p};$$
\item For $\frac{2\log n}{\log(2e)}\leq p\leq\x1^2$ we have
$$\Var\|G\|_p\gtrsim
\frac{n\exp(-\frac{p}{2e}n^{2/p})}{\sqrt{\log n}(\sqrt{\log n}+p-\frac{2\log n}{\log(2e)})};$$
\item For $\x1^2<p$ we have
$$\Var\|G\|_p\gtrsim \frac{\x1^4}{p^3}\Big(1-\frac{\x1^2-\x1}{p}\Big).$$
\end{itemize}
\end{prop}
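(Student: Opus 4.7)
The plan is to substitute the truncated moment bounds from Lemma~\ref{l:2p moment estimate} into the lower bound
$\Var\|G\|_p\gtrsim \frac{n}{p^2}\cdot\frac{A^{p/(p-1)}}{M^{2p-2}}$
supplied by Lemma~\ref{l:varlower adapted}, where $A:=\E(|g|^{2p-2}\chi_{\{|g|\le M\}})$ and $M=M(p)$ is defined in~\eqref{eq: M definition}. The three regimes of the proposition correspond exactly to the three cases in the definition of $M$ and in Lemma~\ref{l:2p moment estimate}. In each regime I would plug in the explicit form $M^{2p-2}=(M^2)^{p-1}$ and simplify; the computation essentially mirrors the upper-bound argument of Proposition~\ref{p:varupper}, the extra twist being the exponent $p/(p-1)$ applied to $A$.

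For $1\le p\le \frac{2\log n}{\log(2e)}$, Lemma~\ref{l:2p moment estimate} gives $A\simeq (2p/e)^{p-1}$, hence $A^{p/(p-1)}\simeq (2p/e)^p$. Dividing by $M^{2p-2}=n^{2-2/p}(p/e)^{p-1}$ and multiplying by $n/p^2$ reduces after routine cancellation to the target $\frac{2^p}{p}\,n^{-1+2/p}$.

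The delicate point in the remaining two regimes is pinning down $A^{1/(p-1)}=\exp((\log A)/(p-1))$ to within a universal multiplicative constant. I would write $A^{p/(p-1)}=A\cdot A^{1/(p-1)}$, read off $\log A$ from Lemma~\ref{l:2p moment estimate}, and exploit that $p-1\gtrsim \log n$ in both regimes to kill all subleading terms after division by $p-1$. In Regime~2 ($\frac{2\log n}{\log(2e)}\le p\le \xi^2$) the dominant contribution to $\log A$ is $p\log(p/e)$, giving $(\log A)/(p-1)=\log(p/e)+O(1)$ and hence $A^{1/(p-1)}\simeq p$; combined with $n^{2/p}\simeq 1$ (valid throughout this window) and $M^{2p-2}=n^{2-2/p}(p/e)^{p-1}$, the ratio reduces to the claimed $\frac{n\exp(-\frac{p}{2e}n^{2/p})}{\sqrt{\log n}\,(\sqrt{\log n}+p-\frac{2\log n}{\log(2e)})}$. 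In Regime~3 ($\xi^2<p$) the lower bound $A\gtrsim \xi^{2p}/(n(\xi+p-\xi^2))$ together with the same reasoning yields $A^{1/(p-1)}\gtrsim \xi^2$; combined with $M\simeq \xi$ (via Lemma~\ref{l:M} and the definition of $M$) and $M^{2p-2}\simeq \xi^{2p-2}p^2/(\xi+p-\xi^2)^2$ (from the defining relation $M^p=\xi^p p/(\xi+p-\xi^2)$), the ratio simplifies to $\frac{\xi^4(\xi+p-\xi^2)}{p^4}=\frac{\xi^4}{p^3}\bigl(1-\frac{\xi^2-\xi}{p}\bigr)$.

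The main obstacle is the uniform bookkeeping of $A^{1/(p-1)}$ throughout Regimes~2 and~3, i.e., verifying that every sub-leading term in $\log A$ (involving $\log n$, $\log\log n$, the $\log$ of the denominator $\sqrt{\log n}+p-\frac{2\log n}{\log(2e)}$, or $\log(\xi+p-\xi^2)$) contributes only a bounded amount after dividing by $p-1$, uniformly as $p$ ranges over the entire window. Beyond this, the argument uses only the quantile asymptotics~\eqref{eq:quantile1} and standard manipulations of the already-established estimates for $M$ and for the truncated Gaussian moments.
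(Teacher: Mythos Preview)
Your proposal is correct and follows essentially the same route as the paper: substitute the estimates from Lemma~\ref{l:2p moment estimate} into Lemma~\ref{l:varlower adapted} and simplify in each of the three regimes. The paper's proof is terser---it simply writes a single $\simeq$ when passing from $A^{p/(p-1)}$ to the explicit answer in Regimes~2 and~3---whereas you correctly isolate the one nontrivial point, namely that $A^{1/(p-1)}\simeq p$ in Regime~2 and $A^{1/(p-1)}\simeq\xi^2$ in Regime~3, and give the right reason (the subleading terms in $\log A$ are $O(\log n)$, hence $O(1)$ after dividing by $p-1\gtrsim\log n$).
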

\begin{proof}
First, assume that $1\leq p\leq \frac{2\log n}{\log(2e)}$. Then a combination of Lemmas~\ref{l:varlower adapted}
and~\ref{l:2p moment estimate} and the definition of $M$ gives
\begin{align*}
\Var\|G\|_p\gtrsim \frac{n}{p^2}\frac{\big(\E\big(|g|^{2p-2}\chi_{\{|g|\leq M\}}\big)\big)^{p/(p-1)}}{M^{2p-2}}
\gtrsim \frac{n}{p^2}\frac{\big((2p/e)^{p-1}\big)^{p/(p-1)}}{n^{2-2/p}(p/e)^{p-1}}\simeq\frac{2^p}{p}\,n^{-1+2/p}.
\end{align*}

\medskip

Next, assume that $\frac{2\log n}{\log(2e)}\leq p\leq \x1^2$. Again, combining Lemmas~\ref{l:varlower adapted}
and~\ref{l:2p moment estimate}, we get
\begin{align*}
\Var\|G\|_p&\gtrsim \frac{n}{p^2}\frac{\big(\frac{1}{\sqrt{\log n}}\cdot
\frac{n^{2}(p/e)^{p}}{\sqrt{\log n}+p-\frac{2\log n}{\log(2e)}}
\exp\big(-\frac{p}{2e}n^{2/p}\big)\big)^{p/(p-1)}}{n^{2-2/p}(p/e)^{p-1}}\\
&\simeq\frac{n\,\exp\big(-\frac{p}{2e}n^{2/p}\big)}{\sqrt{\log n}(\sqrt{\log n}+p-\frac{2\log n}{\log(2e)})}.
\end{align*}

\medskip

Finally, consider the case $\x1^2<p$. We have $M^p=\frac{p\x1^p}{\x1+p-\x1^2}$, and
\begin{align*}
\Var\|G\|_p\gtrsim \frac{n}{p^2}
\frac{\big(\frac{\x1^{2p}}{n(\x1+p-\x1^2)}\big)^{p/(p-1)}}{\big(\frac{p\x1^p}{\x1+p-\x1^2}\big)^{2-2/p}}
\simeq \frac{\x1^4}{p^3}\Big(1-\frac{\x1^2-\x1}{p}\Big).
\end{align*}

The statement follows.
\end{proof}

Note that in the regime $p>\x1^2$ the above estimate gives the right order for $\Var\|G\|_p$ only if $p=O(\log n)$. For  $p\gg \log n$ we will use the following estimate from \cite{PVZ}:
\begin{lemma}[{\cite{PVZ}}]\label{l:PVZlower}
There is a universal constant $C>0$ such that
for $n>1$ and all $p\geq C\log n$ we have
$$\Var(\|G\|_p)\gtrsim \frac{1}{\log n}.$$
\end{lemma}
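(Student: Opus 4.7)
The plan is to exploit the classical fact $\Var\|G\|_\infty \simeq 1/\log n$ (which comes from extreme-value asymptotics: the maximum $g_1^* = \|G\|_\infty$ concentrates around $\xi \simeq \sqrt{2\log n}$ with Gumbel-scale fluctuations of order $1/\xi$), and to show that $\|G\|_p$ inherits this variance from $\|G\|_\infty$ as soon as $p \geq C\log n$ with $C$ a sufficiently large universal constant. The main tool is the $L_2$-triangle inequality
\[
\sqrt{\Var\|G\|_p} \;\geq\; \sqrt{\Var\|G\|_\infty} \;-\; \bigl(\E(\|G\|_p - \|G\|_\infty)^2\bigr)^{1/2},
\]
so the problem reduces to showing $\E(\|G\|_p - \|G\|_\infty)^2 \leq \tfrac14 \Var\|G\|_\infty$.

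To bound this second moment I would write $\|G\|_p = g_1^*(1 + S)^{1/p}$ with $S := \sum_{i \geq 2}(g_i^*/g_1^*)^p$. Concavity of $t \mapsto t^{1/p}$ gives the pointwise inequality $\|G\|_p - \|G\|_\infty \leq g_1^* S / p$, whence $\E(\|G\|_p - \|G\|_\infty)^2 \leq \E[(g_1^*)^2 S^2]/p^2$. I would then estimate $S$ using the typical spacings at the top of the Gaussian order statistics: the Chernoff-type bounds of Section~\ref{ss:chernoff} imply that with high probability $g_1^* - g_i^* \gtrsim (i-1)/\sqrt{\log n}$ for small $i$, so that $(g_i^*/g_1^*)^p \leq \exp(-c_0(i-1)p/\log n)$, and summing the resulting geometric series yields $S \lesssim \exp(-c_0 p/\log n) \leq e^{-c_0 C}$ on an event of probability close to one.

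Combining this with the standard tail bound guaranteeing $(g_1^*)^2 \lesssim \log n$ with high probability, one obtains $(g_1^*)^2 S^2/p^2 \lesssim (\log n)\,e^{-2c_0 C}/p^2$ on the good event; integrating over the complementary low-probability event, where the crude deterministic bound $S \leq n-1$ is used, contributes a tail of the order of $n^2 \cdot e^{-c_1 \log n} \cdot \log n / p^2$, which is negligible for $n$ large. The net effect is $\E(\|G\|_p - \|G\|_\infty)^2 \lesssim e^{-2c_0 C}/\log n$ for all $p \geq C\log n$; choosing the universal constant $C$ large enough makes this strictly smaller than $\Var\|G\|_\infty/4 \simeq c/\log n$, which closes the argument.

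The main obstacle is the intermediate range $C\log n \leq p \leq (\log n)^2$, where the naive sandwich $\|G\|_\infty \leq \|G\|_p \leq n^{1/p}\|G\|_\infty$ alone gives only $\E(\|G\|_p - \|G\|_\infty)^2 \lesssim (\log n)/C^2$, which drops below $1/\log n$ only once $p \gtrsim (\log n)^2$. The refined analysis above — exploiting the exponential decay of $(g_i^*/g_1^*)^p$ coming from the typical $\asymp (i-1)/\sqrt{\log n}$ spacings of the top order statistics — is precisely what is needed to cover the full range $p \geq C\log n$ with $C$ a universal constant.
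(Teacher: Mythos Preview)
The paper does not give its own proof of this lemma; it is quoted from \cite{PVZ} and used as a black box (the paper's own machinery, Proposition~\ref{p:varlower}, yields only $\Var\|G\|_p\gtrsim \xi^4/p^3$ in the range $p>\xi^2$, which degrades for $p\gg\log n$). So your proposal is being compared not to a proof in this paper but to an external citation.

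Your overall strategy --- transfer the known bound $\Var\|G\|_\infty\simeq 1/\log n$ to $\|G\|_p$ via the $L^2$ triangle inequality and a bound on $\E(\|G\|_p-\|G\|_\infty)^2$ --- is natural and can be made to work. But two of your intermediate claims are incorrect as stated.

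First, the spacing estimate $g_1^*-g_i^*\gtrsim (i-1)/\sqrt{\log n}$ is wrong even at the level of expectations: by \eqref{eq:quantile}, $\xi_{1-1/n}-\xi_{1-i/n}\simeq \log(i)/\sqrt{\log n}$, not $(i-1)/\sqrt{\log n}$. With the correct logarithmic growth one gets $(g_i^*/g_1^*)^p\lesssim i^{-cp/\log n}$ rather than a geometric term, but the sum $\sum_{i\ge 2}i^{-cC}$ is still $\lesssim 2^{-cC}$ for large $C$, so this error is repairable. Note also that the Chernoff bounds of Section~\ref{ss:chernoff} give \emph{lower}-tail estimates $\Prob\{g_i^*\le u\xi_{1-i/n}\}$; bounding $g_1^*-g_i^*$ from below requires instead an \emph{upper}-tail bound on $g_i^*$, which that section does not supply.

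Second, and more seriously, the bad event does not have probability $e^{-c_1\log n}$. The event $\{g_1^*-g_2^*<\epsilon/\sqrt{\log n}\}$ has probability of order $\epsilon$ (this is the Gumbel-scale fluctuation you yourself invoke), so the complement of your ``good event'' has probability bounded below by a constant, not by $n^{-c_1}$. Plugging the crude bound $S\le n-1$ into an event of constant probability produces a contribution of order $n^2\log n/p^2$, which for $p=C\log n$ is $\simeq n^2/(C^2\log n)$ --- enormously larger than the target $1/\log n$. The fix is to realize that even on the event where the top spacing is atypically small, $S$ stays bounded (only a few terms can be close to $1$; the rest still decay), so the crude $S\le n-1$ should never enter. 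A layered decomposition over the size of $g_1^*-g_2^*$, or a direct second-moment bound on $S$ conditioned on $g_1^*\simeq\xi$, would close this gap.
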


Together Proposition~\ref{p:varupper}, Lemma~\ref{l:PVZ upper},
Proposition~\ref{p:varlower} and Lemma~\ref{l:PVZlower} imply Theorem~A from the introduction in the regime $p\geq C$.
For $1\leq p\leq C$, we refer to \cite{PVZ}.

\section{Proof of Corollary~B}\label{s:corB}

Let $0<\varepsilon,\delta<1$ be given. It follows from Theorem A that there exists  $v_\delta>0$ depending on $\delta$ such that for all sufficiently large $n$, we have
$$
\Var\|G\|_{(2-\delta)\log n}\leq n^{-v_\delta}.
$$
Let $w_\delta:=v_\delta/40$ and $1<k<w_\delta\log n/\log (2/\varepsilon)$. Construct  an $n\times k$ Gaussian matrix $\G$ whose columns are jointly independent standard Gaussian vectors $G_1,G_2,\dots,G_k$ in $\R^n$. Then a uniform random
$k$-dimensional subspace can be defined as
$$
E=\mathrm{span}\,\{G_1,\dots,G_k\}=\{\G x:\,x\in \R^k\}.
$$
 The subspace $E$ is $(1+\varepsilon)$--spherical in $\ell_{p}^n$ for $p:=(2-\delta)\log n$ if
$$
\frac{\sup_{y\in E}\|y\|_p/\|y\|_2}{\inf_{y\in E}\|y\|_p/\|y\|_2}=
\frac{\sup_{x\in S^{k-1}}\|\G x\|_p/\|\G x\|_2}{\inf_{x\in S^{k-1}}\|\G x\|_p/\|\G x\|_2}\leq 1+\varepsilon.
$$
The last inequality holds whenever for all $x\in S^{k-1}$ we have
$$
\big|\|\G x\|_p/\|\G x\|_2-\E(\|\G x\|_p/\|\G x\|_2)\big|\leq\varepsilon' \E(\|\G x\|_p/\|\G x\|_2),
\quad \varepsilon':=\varepsilon/(2+\varepsilon).
$$
Let $a:=\E\|G\|_p$. Note that if $x\in S^{k-1}$ then $\G x$  is a standard Gaussian vector and we have
 $$
\Prob\big\{\big|\|\G x\|_p-a\big|>\varepsilon' a/4\big\}
<\frac{16\Var{\|G\|_p}}{{\varepsilon'}^2a^2}<\frac{16\Var{\|G\|_p}}{{\varepsilon'} ^2}.
$$
Let $\mathcal{N}$ be an $\varepsilon'/5$-net of minimal cardinality in $\ell_2$-metric in $S^{k-1}$.  We have  $|\mathcal{N}|<(15/\varepsilon')^k$ and
 $$
\Prob\big\{\exists y\in\mathcal{N}:\,\big|\|\G y\|_p-a\big|>\varepsilon' a/4\big\}<\Big(\frac{15}{\varepsilon'}\Big)^k\,\frac{16\Var{\|G\|_p}}{{\varepsilon'}^2}.
$$
Conditioning on the event that $\big|\|\G y\|_p-a\big|\le\varepsilon' a/4$ for all $y\in\mathcal{N}$ we have
$$
\|\G\|_{\ell_2\rightarrow\ell_p}\le \frac{(1+\varepsilon'/4) a}{1-\varepsilon'/5}
$$
(see e.g. Lemma 3.2 of \cite{PV 17}).  Thus if there exists $x\in S^{k-1}$ such that $\big|\|\G x\|_p-a\big|>\varepsilon' a/2$ then there exists $y\in\mathcal{N}$ such that $\|x-y\|_2\le\varepsilon'/5$ and
$$
\big|\|\G y\|_p-a\big|\ge\big|\|\G x\|_p-a\big|-\|\G(x-y)\|_p>\varepsilon' a/2- \frac{(1+\varepsilon'/4) a\varepsilon'}{5-\varepsilon'}
>\frac{\varepsilon' a}{4}.
$$
This leads to
\begin{align*}
\Prob\big\{\exists &x\in S^{k-1}:\,\big|\|\G x\|_p-a\big|>\varepsilon' a/2\big\}
<\Big(\frac{15}{\varepsilon'}\Big)^k\,\frac{16\Var{\|G\|_p}}{{\varepsilon'} ^2}.
\end{align*}
  To pass from $\|\G x\|_p$ to $\|\G x\|_p/\|\G x\|_2$, note that given $\widetilde{\varepsilon}<1/3$ and non-negative random variables  $\xi_1$, $\xi_2$, the event that
  $$
  |\xi_1-\E\xi_1|<\widetilde{\varepsilon}\E\xi_1\quad\mbox{and}\quad |\xi_2-\E\xi_2|<\widetilde{\varepsilon}\E\xi_2/3
  $$
  is contained inside the event
   $$
   |\xi_1/\xi_2-\E(\xi_1/\xi_2)|<2\widetilde{\varepsilon}\E(\xi_1/\xi_2).
   $$
  By the standard concentration estimates, we have
   $$
   \Prob\big\{\forall x\in S^{k-1}:\,\big|\|\G x\|_2-\E\|\G x\|_2\big|<t\E\|\G x\|_2\big\}>1-\frac{c}{t^2n},
   \quad \forall t>0
   $$
   hence, taking $\xi_1=\|G\|_p$, $\xi_2=\|G\|_2$, and $\widetilde{\varepsilon}=\varepsilon'/2$, we get
  \begin{align*}
\Prob \big\{&\exists x\in S^{k-1}:\,\big|\|\G x\|_p/\|\G x\|_2-\E(\|\G x\|_p/\|\G x\|_2)\big|>\varepsilon' \E(\|\G x\|_p/\|\G x\|_2)\big\}
\\
&<
\Prob\big\{\exists x\in S^{k-1}:\,\big|\|\G x\|_p-a\big|>\varepsilon' a/2\big\}
\\
&\quad+\Prob\big\{ \exists x\in S^{k-1}:\,\big|\|\G x\|_2-\E\|\G x\|_2\big|>\varepsilon'\E\|\G x\|_2/6\big\}
\\
&<\Big(\frac{15}{\varepsilon'}\Big)^k\,\frac{16\Var{\|G\|_p}}{{\varepsilon'}^2}+\frac{c}{{\varepsilon'}^2n}
\\
&<\Big(\frac{45}{\varepsilon}\Big)^k\frac{48n^{-v_\delta}}{{\varepsilon}^2}+\frac{c}{{\varepsilon}^2n}
\\
&<n^{-w_\delta}
\end{align*}
provided that $n$ is big enough. Thus (\ref{eq:spherical}) is proved.
\medskip

To prove the second part of Corollary~B, it is enough to consider the case $k=2$. Then $E=\mathrm{span}\,\{G,G'\}$, where $G$ and $G'$ are two independent standard Gaussian vectors in $\R^n$,  and
$$
\frac{\sup_{y\in E}\|y\|_p/\|y\|_2}{\inf_{y\in E}\|y\|_p/\|y\|_2}\ge \frac{\|G\|_p/\|G\|_2}{\|G'\|_p/\|G'\|_2}.
$$
Thus it is enough to show that for $p:=(2+\delta)\log n$ we have
$$
\Prob\Big\{\frac{\|G\|_p/\|G\|_2}{\|G'\|_p/\|G'\|_2}
>1+\frac{w_{\delta}}{\log n}\Big\}\geq w_\delta
$$
for some $w_\delta>0$ depending only on $\delta$.
Observe that standard concentration estimates imply
$$\Prob\Big\{\|G'\|_2/\|G\|_2\leq 1-\frac{1}{\log^2 n}\Big\}\ll 1,$$
whence
it is enough to show that
$$
\Prob\Big\{\frac{\|G\|_p}{\|G'\|_p}
>1+\frac{\widetilde w_{\delta}}{\log n}\Big\}\geq \widetilde w_{\delta}
$$
for some $\widetilde w_{\delta}>0$.
Recall that $\Var\|G\|_p\geq \frac{v_\delta}{\log n}$ for some $v_\delta>0$. Hence,
\begin{align*}
\frac{2v_\delta}{\log n}&\leq \E(\|G\|_p-\|G'\|_p)^2\\
&=2\int_0^\infty t\,\Prob\big\{\big|\|G\|_p-\|G'\|_p\big|\geq t\big\}\,dt\\
&\leq \frac{v_\delta}{\log n}+2\int_{\sqrt{v_\delta/\log n}}^\infty t\,\Prob\big\{\big|\|G\|_p-\|G'\|_p\big|\geq t\big\}\,dt.
\end{align*}
Next, observe that $\E\|G\|_p\simeq \sqrt{\log n}$, and in view of $1$-symmetry of the $\|\cdot\|_p$--norm and by a result from
\cite{T 2017}, we have
$$\Prob\big\{\big|\|G\|_p-\|G'\|_p\big|\geq t\big\}\lesssim n^{-c't/\E\|G\|_p},\quad t>0.$$
This, together with the above relation, implies
$$\frac{v_\delta}{\log n}\lesssim \int_{\sqrt{v_\delta/\log n}}^{C/\sqrt{\log n}} t\,\Prob\big\{\big|\|G\|_p-\|G'\|_p\big|\geq t\big\}\,dt,$$
whence
$$\Prob\big\{\big|\|G\|_p-\|G'\|_p\big|\geq \sqrt{v_\delta/\log n}\big\}\gtrsim  v_\delta.$$
This, and the fact that $\|G\|_p\simeq \sqrt{\log n}$
with very large probability, implies the statement.

\end{document}